\numberwithin{equation}{section}
\theoremstyle{plain}
\newtheorem{Cor}[equation]{Corollary}
\newtheorem{Lem}[equation]{Lemma}
\newtheorem{Prop}[equation]{Proposition}
\newtheorem{Thm}[equation]{Theorem}
\theoremstyle{definition}
\newtheorem{Cstr}[equation]{Construction}
\newtheorem{Defn}[equation]{Definition}
\newtheorem{Exs}[equation]{Examples}
\theoremstyle{remark}
\newtheorem{Rks}[equation]{Remarks}
\theoremstyle{plain}
\newtheorem*{Cl*}{Claim}
\newtheorem*{Conj*}{Conjecture}
\newtheorem*{Lem*}{Lemma}
\newtheorem*{Prop*}{Proposition}
\newtheorem*{Q*}{Question}
\newtheorem*{Schol*}{Scholium}
\newtheorem*{SubCl*}{Subclaim}
\newtheorem*{Thm*}{Theorem}
\theoremstyle{definition}
\newtheorem*{Cond*}{Condition}
\newtheorem*{Cstr*}{Construction}
\newtheorem*{Defn*}{Definition}
\newtheorem*{Ex*}{Example}
\newtheorem*{Exs*}{Examples}
\newtheorem*{Md*}{Method}
\newtheorem*{Nt*}{Notation}
\newtheorem*{Pty*}{Property}
\theoremstyle{remark}
\newtheorem*{Rk*}{Remark}
\newtheorem*{Rks*}{Remarks}
\newtheorem*{A-d}{Aside}
\newcommand{\cag}{\begin{equation}\begin{gathered}}
\newcommand{\caag}{\end{gathered}\end{equation}}
\newcommand{\caw}{\begin{equation*}\begin{gathered}}
\newcommand{\caaw}{\end{gathered}\end{equation*}}
\newcommand{\e}{\begin{equation}\begin{aligned}}
\newcommand{\ee}{\end{aligned}\end{equation}}
\newcommand{\ew}{\begin{equation*}\begin{aligned}}
\newcommand{\eew}{\end{aligned}\end{equation*}}
\newcommand{\bcd}{\begin{tikzcd}}
\newcommand{\ecd}{\end{tikzcd}}
\newcommand{\bma}{\begin{matrix}}
\newcommand{\ema}{\end{matrix}}
\newcommand{\bpm}{\begin{pmatrix}}
\newcommand{\epm}{\end{pmatrix}}
\newcommand{\bvm}{\begin{vmatrix}}
\newcommand{\evm}{\end{vmatrix}}
\newcommand{\nts}{\begin{tcolorbox}}
\newcommand{\ntss}{\end{tcolorbox}}
\newcommand{\aref}[1]{Appendix \ref{#1}}
\newcommand{\cref}[1]{Corollary \ref{#1}}
\newcommand{\dref}[1]{Definition \ref{#1}}
\newcommand{\eref}[1]{eqn.\hspace{0.6mm}(\ref{#1})}
\newcommand{\exref}[1]{Example \ref{#1}}
\newcommand{\lref}[1]{Lemma \ref{#1}}
\newcommand{\pref}[1]{Proposition \ref{#1}}
\newcommand{\prefs}[1]{Propositions \ref{#1}}
\newcommand{\sref}[1]{\S\ref{#1}}
\newcommand{\tref}[1]{Theorem \ref{#1}}
\newcommand{\mss}[1]{\mbox{\scriptsize \(#1\)}}
\newcommand{\mfn}[1]{\mbox{\footnotesize \(#1\)}}
\newcommand{\msm}[1]{\mbox{\small \(#1\)}}
\newcommand{\mLa}[1]{\mbox{\Large \(#1\)}}
\newcommand{\bb}[1]{\mathbb{#1}}
\newcommand{\cal}[1]{\mathscr{#1}}
\newcommand{\fr}[1]{\mathfrak{#1}}
\newcommand{\mb}[1]{\mbox{\boldmath \(#1\)}}
\newcommand{\mc}[1]{\mathcal{#1}}
\newcommand{\et}{\hspace{5mm}\text{and}\hspace{5mm}}
\newcommand{\hs}[1]{\hspace{#1}}
\newcommand{\vs}[1]{\vspace{#1}}
\DeclareMathSymbol{\Alpha}{\mathalpha}{operators}{"41}
\DeclareMathSymbol{\Beta}{\mathalpha}{operators}{"42}
\DeclareMathSymbol{\Epsilon}{\mathalpha}{operators}{"45}
\DeclareMathSymbol{\Zeta}{\mathalpha}{operators}{"5A}
\DeclareMathSymbol{\Eta}{\mathalpha}{operators}{"48}
\DeclareMathSymbol{\Iota}{\mathalpha}{operators}{"49}
\DeclareMathSymbol{\Kappa}{\mathalpha}{operators}{"4B}
\DeclareMathSymbol{\Mu}{\mathalpha}{operators}{"4D}
\DeclareMathSymbol{\Nu}{\mathalpha}{operators}{"4E}
\DeclareMathSymbol{\Omicron}{\mathalpha}{operators}{"4F}
\DeclareMathSymbol{\Rho}{\mathalpha}{operators}{"50}
\DeclareMathSymbol{\Tau}{\mathalpha}{operators}{"54}
\DeclareMathSymbol{\Chi}{\mathalpha}{operators}{"58}
\DeclareMathSymbol{\omicron}{\mathord}{letters}{"6F}
\newcommand{\al}{\alpha}
\newcommand{\ga}{\gamma}
\newcommand{\de}{\delta}
\renewcommand{\th}{\theta}
\newcommand{\io}{\iota}
\newcommand{\vpi}{\varpi}
\newcommand{\rh}{\rho}
\newcommand{\vrh}{\varrho}
\newcommand{\si}{\sigma}
\newcommand{\vsi}{\varsigma}
\newcommand{\ph}{\phi}
\newcommand{\vph}{\upvarphi}
\newcommand{\ch}{\chi}
\newcommand{\om}{\omega}
\newcommand{\Th}{\Theta}
\newcommand{\Om}{\Omega}
\newcommand{\<}{\langle}
\newcommand{\?}{\rangle}
\newcommand{\Ann}{\operatorname{Ann}}
\newcommand{\ds}{\oplus}
\newcommand{\End}{\operatorname{End}}
\newcommand{\Hom}{\operatorname{Hom}}
\newcommand{\Id}{\operatorname{Id}}
\newcommand{\Iso}{\operatorname{Iso}}
\newcommand{\rqt}[2]{\left.\raisebox{1mm}{\(#1\)}\middle/\raisebox{-1mm}{\(#2\)}\right.}
\newcommand{\ts}{\otimes}
\newcommand{\Ts}{\bigotimes}
\newcommand{\x}{\times}
\newcommand{\del}{\partial}
\newcommand{\Hocl}[1]{\overset{\circ}{H^{#1}}_{\kern-1.9mm\cl}}
\newcommand{\lop}{\left\|\kern-1.30mm\left\|}
\newcommand{\op}{\|\kern-1.30mm\|}
\newcommand{\rop}{\right\|\kern-1.30mm\right\|}
\newcommand{\SI}{\operatorname{\cal{I}\kern-1.5pt nd}}
\newcommand{\cc}{\subseteq}
\newcommand{\IL}{\varprojlim}
\newcommand{\mt}{\mapsto}
\newcommand{\osr}{\backslash}
\newcommand{\oto}[1]{\xrightarrow{#1}}
\newcommand{\pc}{\subset}
\newcommand{\CL}{\mathcal{C}l}
\newcommand{\cl}{\mathrm{closed}}
\newcommand{\dd}{\mathrm{d}}
\newcommand{\dR}[1]{H^{#1}_{\operatorname{dR}}}
\newcommand{\emb}{\hookrightarrow}
\newcommand{\Gr}{\mathrm{Gr}}
\newcommand{\hk}{\righthalfcup}
\newcommand{\Hs}{\raisebox{1pt}{\mss{\bigstar}}}
\newcommand{\K}{\mathrm{K}}
\newcommand{\M}{\mathrm{M}}
\newcommand{\N}{\mathrm{N}}
\newcommand{\oGr}{\widetilde{\mathrm{\Gr}}}
\newcommand{\Op}{\mathcal{O}p}
\newcommand{\sch}[2]{\operatorname{H}^{#1}\left(#2\right)}
\newcommand{\sph}{\widetilde{\phi}}
\renewcommand{\ss}[2][{}]{\bigodot{\hspace{-1mm}}^{#2}_{#1}\hspace{0.6mm}}
\newcommand{\svph}{\widetilde{\upvarphi}}
\newcommand{\svps}{\widetilde{\uppsi}}
\newcommand{\T}{\mathrm{T}}
\newcommand{\w}{\wedge}
\newcommand{\ww}[2][{}]{\bigwedge{\hspace{-1mm}}^{#2}_{\hspace{1mm}#1}\hspace{0.1mm}}
\newcommand{\1}{\cdot}
\newcommand{\bin}{\binom}
\renewcommand{\ge}{\geqslant}
\newcommand{\gl}{\hspace{0.4mm}\raisebox{0.8mm}{\(>\)}\kern-1.8mm\raisebox{-0.8mm}{\(<\)}\hspace{0.4mm}}
\newcommand{\gle}{\hspace{0.4mm}\raisebox{1.2mm}{\(\ge\)}\kern-1.8mm\raisebox{-1.2mm}{\(\le\)}\hspace{0.4mm}}
\renewcommand{\le}{\leqslant}
\newcommand{\pt}{\bullet}
\newcommand{\g}{\(\mathrm{G}_2\)}
\newcommand{\Gg}{\mathrm{G}}
\newcommand{\GL}{\operatorname{GL}}
\newcommand{\sdp}{\ltimes}
\newcommand{\sg}{\(\widetilde{\mathrm{G}}_2\)}
\newcommand{\SL}{\operatorname{SL}}
\newcommand{\slc}{\(\operatorname{SL}(3;\mathbb{C})\)}
\newcommand{\slr}{\(\operatorname{SL}(3;\mathbb{R})^2\)}
\newcommand{\SO}{\operatorname{SO}}
\newcommand{\Spin}{\operatorname{Spin}}
\newcommand{\Stab}{\operatorname{Stab}}
\newcommand{\SU}{\operatorname{SU}}
\newcommand{\Un}{\operatorname{U}}
\newcommand{\acs}{almost complex structure}
\renewcommand{\iff}{if and only if}
\newcommand{\wlg}{without loss of generality}
\newcommand{\Wrt}{With respect to}
\newcommand{\wrt}{with respect to}
\newcommand{\ol}[1]{\overline{#1}}
\newcommand{\lt}{\left}
\newcommand{\m}{\middle}
\newcommand{\rt}{\right}
\newcommand{\tld}{\widetilde}
\title[Topological properties of closed \sg, \(\SL(3;\bb{C})\) and \(\SL(3;\bb{R})^2\) forms on manifolds]{\boldmath Topological properties of closed \sg, \(\SL(3;\bb{C})\) and \(\SL(3;\bb{R})^2\) forms on manifolds}
\author{Laurence H. Mayther}
\begin{document}\fontsize{10pt}{12pt}\selectfont
\begin{abstract}
\footnotesize{This paper uses algebro-topological techniques such as characteristic classes and obstruction theory, together with the \(h\)-principles for \sg\ and \slr\ forms recently established by the author and the \(h\)-principle for \slc\ forms established by Donaldson, to prove results on the topological properties of closed \sg, \slc\ and \slr\ forms on oriented 6- and 7-manifolds.  Specifically, a criterion for an arbitrary oriented 7-manifold to admit a closed (resp.\ coclosed) \sg-structure is obtained, proving a conjecture of L\^{e}; a generalisation of Donaldson's `\g-cobordisms' to \sg, \slc\ and \slr\ forms is introduced, with homotopic \slc\ and \slr\ forms in a given cohomology class shown to be \sg-cobordant, a result which currently has no analogue in the \g\ case; and a complete classification of closed \slc\ forms up to homotopy is provided.  Additionally, a lower bound on the number of homotopy classes of closed \slr\ forms on a given manifold is obtained, and the question of which closed \slc\ or \slr\ forms arise as the boundary values of closed \sg-structures on oriented 7-manifolds is investigated.}
\end{abstract}
\maketitle

\section{Introduction}

Let \(\si_0 \in \ww{p}\lt(\bb{R}^n\rt)^*\) be stable in the sense of \cite{SF&SM}, i.e.\ the \(\GL_+(n;\bb{R})\) orbit of \(\si_0\) in \(\ww{p}\lt(\bb{R}^n\rt)^*\) is open, and let \(\M\) be an oriented \(n\)-manifold.  A \(p\)-form \(\si \in \Om^p(\M)\) is termed a \(\si_0\)-form if, for each \(x \in \M\), there exists an orientation-preserving isomorphism \(\al: \T_x\M \to \bb{R}^n\) satisfying \(\al^*\si_0 = \si\); by stability, any sufficiently small perturbation of \(\si\) is also a \(\si_0\)-form.  A closed \(\si_0\)-form is then simply a \(\si_0\)-form \(\si\) which satisfies \(\dd\si = 0\).  The aim of this paper is to study the topological properties of the space of closed \(\si_0\)-forms on an arbitrary oriented \(n\)-manifold for the following four choices of stable form \(\si_0\):
\cag
\svph_0 = \th^{123} - \th^{145} - \th^{167} + \th^{246} - \th^{257} - \th^{347} - \th^{356} \in\ww{3}\lt(\bb{R}^7\rt)^*;\\
\svps_0 = \th^{4567} - \th^{2367} - \th^{2345} + \th^{1357} - \th^{1346} - \th^{1256} - \th^{1247} \in\ww{4}\lt(\bb{R}^7\rt)^*;\\
\rh_- = \th^{135} - \th^{146} - \th^{236} - \th^{245} \in \ww{3}\lt(\bb{R}^7\rt)^*;\\
\rh_+ = \th^{123} + \th^{456} \in \ww{3}\lt(\bb{R}^7\rt)^*.
\caag
The corresponding \(\si_0\)-forms are known as \sg\ 3-forms, \sg\ 4-forms, \slc\ 3-forms and \slr\ 3-forms respectively, the names referring to the respective stabilisers of the forms \(\svph_0\), \(\svps_0\), \(\rh_-\) and \(\rh_+\) in \(\GL_+(7;\bb{R})\) and \(\GL_+(6;\bb{R})\) as appropriate, where \sg\ denotes the centreless, doubly connected Lie group corresponding to the split real form of the exceptional Lie algebra \(\fr{g}_{2,\bb{C}}\).

Let \(\si_0 = \svph_0\), \(\svps_0\), \(\rh_+\) or \(\rh_-\).  Given an oriented \(n\)-manifold \(\M\) and a fixed cohomology class \(\al \in \dR{p}(\M)\) (where \(n = 6\), \(7\) and \(p = 3\), \(4\), as appropriate), write \(\CL^p_{\si_0}(\M)\) for the set of closed \(\si_0\)-forms on \(\M\) and \(\CL^p_{\si_0}(\al)\) for the set of closed \(\si_0\)-forms representing the cohomology class \(\al\).  In  \cite{RoG2MwB}, Donaldson proved that the inclusions:
\e\label{HE}
\CL^p_{\si_0}(\al) \emb \CL^p_{\si_0}(\M) \emb \Om^p_{\si_0}(\M)
\ee
are homotopy equivalences in the case where \(\si_0 = \rh_-\).  Using different techniques, in \cite{RhPfCSF, TRhPfCSL3R23F} the author proved that corresponding inclusions  in the three cases \(\si = \svph_0\), \(\svps_0\) and \(\rh_+\) are homotopy equivalences, as well as providing an alternative proof of Donaldson's result.  By virtue of these homotopy equivalences, the topological properties of the spaces of \sg\ 3- and 4-forms, \slc\ 3-forms and \slr\ 3-forms which are closed, or which lie in any given cohomology class, can be understood completely by studying the spaces of all \sg\ 3- and 4-forms, \slc\ 3-forms and \slr\ 3-forms, respectively.  These spaces can be investigated using bundle-theoretic techniques such as characteristic classes and obstruction theory, and the purpose of this paper is to carry out such an investigation.

I begin by considering the existence of closed \sg\ 3- and 4-forms.  Since \(\svph_0\) and \(\svps_0\) both have stabiliser in \(\GL_+(7;\bb{R})\) isomorphic to \sg, the existence of either a \sg\ 3-form or a \sg\ 4-form is equivalent to a \sg-structure on \(\M\), i.e.\ a principal \sg-subbundle of the frame bundle of \(\M\).  In \cite{MAaSG2S}, L\^{e} proved that a closed, oriented 7-manifold admits \sg-structures \iff\ it is spin, and conjectured that the same result should hold without the assumption of compactness.  I begin by proving this conjecture:

\begin{Thm}\label{SG2-Exist}
Let \(\M\) be an oriented 7-manifold (not necessarily closed and possibly with boundary).  Then, \(\M\) admits \sg-structures \iff\ it is spin.
\end{Thm}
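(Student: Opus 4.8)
The plan is to translate the existence of a \sg-structure into a classifying-space lifting problem and settle it by obstruction theory. Since \sg\ is the stabiliser in \(\GL_+(7;\bb{R})\) of \(\svph_0\), a \sg-structure is exactly a reduction of the oriented frame bundle of \(\M\) to \(\widetilde{\Gg}_2 \pc \GL_+(7;\bb{R})\). As \(\widetilde{\Gg}_2\) is connected with maximal compact subgroup \(\SO(4)\), the homogeneous space \(\widetilde{\Gg}_2/\SO(4)\) is diffeomorphic to a Euclidean space, so the associated bundle with this contractible fibre has a section and such a reduction exists \iff\ the frame bundle already reduces to \(\SO(4)\) along the restriction \(\rho\) of the \(7\)-dimensional representation to \(\SO(4)\). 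Concretely \(\rho \cong \La^2_+\oplus\mathrm{std}\): writing \(\bb{R}^7 = \bb{R}^3\oplus\bb{R}^4\), the group \(\SO(4)\) acts through a projection \(\SO(4)\to\SO(3)\) on the first summand and by the standard representation on the second. Equivalently, a \sg-structure is a lift of the classifying map \(\M\to B\SO(7)\) of \(\T\M\) along \(B\rho\colon B\SO(4)\to B\SO(7)\).

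I would first dispatch the (easy) necessity of spinness. A direct computation shows that \(\rho_*\colon\pi_1(\SO(4)) = \bb{Z}/2 \to \pi_1(\SO(7)) = \bb{Z}/2\) is zero: the generator lifts to a path from \((1,1)\) to \((-1,-1)\) in \(\Spin(4) = \SU(2)\x\SU(2)\), whose image under \(\rho\) is a loop in the product of two commuting \(\SO(3)\)-subgroups of \(\SO(7)\), each factor generating \(\pi_1(\SO(7)) = \bb{Z}/2\), so that the class is \(1+1 = 0\). Hence \(\rho\) lifts to \(\hat\rho\colon\SO(4)\to\Spin(7)\), and any \(\SO(4)\)-reduction of the frame bundle induces a \(\Spin(7)\)-structure; thus the existence of a \sg-structure forces \(\M\) to be spin. (Equivalently, for the rank-\(4\) bundle \(E\) underlying the reduction one has \(w_2(\T\M) = w_2(\La^2_+E) + w_2(E) = 2\,w_2(E) = 0\), using \(w_2(\La^2_+E) = w_2(E)\).)

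For the converse, suppose \(\M\) is spin, fix a spin structure \(\M\to B\Spin(7)\), and attempt to lift it along \(B\hat\rho\colon B\SO(4)\to B\Spin(7)\). The homotopy fibre is \(F = \Spin(7)/\SO(4)\), which is \(1\)-connected because \(\pi_1(\Spin(7)) = 0\); from the fibration \(\SO(4)\to\Spin(7)\to F\) one computes \(\pi_2 F = \bb{Z}/2\), \(\pi_3 F = 0\), \(\pi_4 F \cong\bb{Z}\) and \(\pi_5 F = (\bb{Z}/2)^2\), with \(\pi_6 F\) finite. The successive obstructions to a section therefore lie in \(H^3(\M;\bb{Z}/2)\), \(H^5(\M;\bb{Z})\), \(H^6(\M;(\bb{Z}/2)^2)\) and \(H^7(\M;\pi_6 F)\), each being the pullback along \(\M\to B\Spin(7)\) of a universal transgression in the Serre spectral sequence of \(F\to B\SO(4)\to B\Spin(7)\).

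The crux is to show that all of these universal classes vanish, and here I would invoke the mod-\(2\) cohomology \(H^*(B\Spin(7);\bb{Z}/2)\), whose part in degrees \(\le 7\) is spanned by \(1, w_4, w_6, w_7\) (in particular \(H^3 = 0\), and \(H^5(B\Spin(7);\bb{Z}) = 0\), since rationally the cohomology is concentrated in even degrees, there is no odd torsion, and \(H^5(-;\bb{Z}/2) = 0\)). Each of \(w_4, w_6, w_7\) restricts nontrivially along \(B\hat\rho\): since \((B\hat\rho)^*w_k = w_k(\La^2_+V\oplus V)\) for the tautological bundle \(V\) over \(B\SO(4)\), a Stiefel--Whitney computation (built on \(w_2(\La^2_+V) = w_2(V)\)) shows these are nonzero in \(H^*(B\SO(4);\bb{Z}/2) = \bb{Z}/2[w_2,w_3,w_4]\). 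Consequently the edge homomorphism \(H^{\le 7}(B\Spin(7);\bb{Z}/2)\to H^{\le 7}(B\SO(4);\bb{Z}/2)\) is injective, so no positive-degree mod-\(2\) class of the base is transgressive below degree \(8\), which annihilates the obstructions in \(H^6\) and \(H^7\); together with \(H^3 = 0\) and \(H^5(B\Spin(7);\bb{Z}) = 0\), every obstruction vanishes, and the lift — equivalently the \sg-structure — exists on any spin \(\M\), irrespective of compactness or boundary. The main obstacle is exactly this cohomological input: pinning down \(H^*(B\Spin(7))\) and the homotopy of \(F\) in the relevant range, and verifying the nonvanishing of the restrictions \((B\hat\rho)^*w_k\), which is what forces the spectral sequence to degenerate enough in low degrees to kill every obstruction.
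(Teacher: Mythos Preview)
Your approach differs substantially from the paper's, and the sufficiency half has a real gap.

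You are right that a \sg-structure is equivalent to an \(\SO(4)\)-reduction of the frame bundle along \(\rho\cong\Lambda^2_+\oplus\mathrm{std}\), and your necessity argument (that \(\rho_*=0\) on \(\pi_1\), so \(\rho\) lifts to \(\Spin(7)\)) is sound. The paper reaches the same reduction to \(\SO(4)\) but then proceeds quite differently: rather than climb a Moore--Postnikov tower for \(B\SO(4)\to B\Spin(7)\), it observes that the very same \(\SO(4)\subset\GL(7;\bb{R})\) also sits inside the compact \(\Gg_2\) as the stabiliser of a calibrated \(3\)-plane (Lemma~\ref{Cal-PCal}). Thus an \(\SO(4)\)-reduction is simultaneously a \(\Gg_2\)-structure together with a calibrated \(3\)-plane field, and conversely; the paper then shows (Proposition~\ref{Cal-Exist}) that every \(\Gg_2\)-structure on a \(7\)-manifold carries such a field, by producing two independent vector fields and taking their \(\Gg_2\) cross-product. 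That last step is the substantive one, and it is handled by the single obstruction \(w_6(\M)=0\), which the paper proves for all oriented \(7\)-manifolds via Massey's theorem and Lemma~\ref{closed-exhaust}. Gray's theorem (\(\Gg_2\Leftrightarrow\) spin) then closes the loop.

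The gap in your argument is the assertion that every obstruction to lifting \(\M\to B\Spin(7)\) through \(B\SO(4)\) is the pullback of a Serre transgression. This is true only for the \emph{primary} obstruction. Once you choose a lift over the \(2\)-skeleton killing the class in \(H^3(\M;\pi_2F)\), the next obstruction lives in the cohomology of the first Moore--Postnikov stage, not of \(B\Spin(7)\); it depends on the choice and is only well-defined modulo an indeterminacy. Injectivity of the edge map \(H^{\le 7}(B\Spin(7);\bb{Z}/2)\to H^{\le 7}(B\SO(4);\bb{Z}/2)\) tells you that no base class in that range is killed by a differential from the fibre, hence the transgressions vanish --- but that disposes only of the first obstruction, not the secondary and tertiary ones in \(H^5\), \(H^6\), \(H^7\). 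To push your approach through you would need to analyse the \(k\)-invariants of the tower stage by stage (or exhibit some special structure on \(F=\Spin(7)/\SO(4)\) that forces them to vanish), and you would also need to justify the asserted values of \(\pi_3F,\pi_4F,\pi_5F\) and the nonvanishing of \((B\hat\rho)^*w_4,w_6,w_7\), none of which is immediate. The paper's calibrated-plane bridge sidesteps all of this by trading the multi-stage tower for a single characteristic-class condition \(w_6=0\).
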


Combining \tref{SG2-Exist} with the homotopy equivalences established in \cite{RhPfCSF} yields the following corollary:
\begin{Thm}\label{SG2-Exist-Coh}
Let \(\M\) be an oriented 7-manifold.  If \(\M\) is spin, then every degree 3 cohomology class can be represented by a \sg\ 3-form and every degree
4 cohomology class can be represented by a \sg\ 4-form.
\end{Thm}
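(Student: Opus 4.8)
The plan is to deduce this corollary formally from \tref{SG2-Exist} together with the homotopy equivalences of \eref{HE}, with no further geometric input required. The crucial reformulation is that a cohomology class $\al$ is represented by a closed \sg\ 3-form (resp.\ 4-form) precisely when the space $\CL^3_{\svph_0}(\al)$ (resp.\ $\CL^4_{\svps_0}(\al)$) is nonempty; since nonemptiness is invariant under homotopy equivalence, the whole statement reduces to checking that the ambient spaces $\Om^3_{\svph_0}(\M)$ and $\Om^4_{\svps_0}(\M)$ of all \sg\ forms are nonempty.

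First I would invoke \tref{SG2-Exist}: as $\M$ is spin, it carries a \sg-structure. Because the stabilisers of both $\svph_0$ and $\svps_0$ in $\GL_+(7;\bb{R})$ are isomorphic to \sg, such a structure is the same datum as a global \sg\ 3-form and equally as a global \sg\ 4-form; hence $\Om^3_{\svph_0}(\M)$ and $\Om^4_{\svps_0}(\M)$ are both nonempty. Next, fixing an arbitrary class $\al \in \dR{3}(\M)$, I would use that the inclusion $\CL^3_{\svph_0}(\al) \emb \Om^3_{\svph_0}(\M)$ is a homotopy equivalence---being the composite of the two homotopy equivalences \eref{HE} established in \cite{RhPfCSF}---and in particular admits a homotopy inverse, so the nonempty target forces the source to be nonempty; any element of $\CL^3_{\svph_0}(\al)$ is then a closed \sg\ 3-form representing $\al$. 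Repeating the argument verbatim with $\svps_0$ and degree 4 in place of $\svph_0$ and degree 3 settles the 4-form assertion.

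The proof presents no serious obstacle, since the two substantial inputs---the existence of the structure and the homotopy equivalences---are already in hand; the only point requiring care is the elementary but essential observation that a homotopy equivalence forces its source and target to be simultaneously empty or nonempty, so that nonemptiness of the full form space propagates to every individual cohomology class. It is worth emphasising that \eref{HE} asserts the homotopy equivalence for each fixed $\al$ separately, which is precisely what permits the conclusion for all degree 3 and degree 4 classes at once.
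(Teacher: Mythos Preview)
Your proposal is correct and follows exactly the approach indicated in the paper, which simply states that the theorem is obtained by combining \tref{SG2-Exist} with the homotopy equivalences of \eref{HE}. You have spelled out in detail the one-line deduction the paper leaves implicit, and there is nothing to add.
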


I then, investigate the link between closed \slc\ and \slr\ 3-forms in 6 dimensions and closed \sg\ 3-forms in 7 dimensions.  An \slc\ or \slr\ 3-form \(\rh\) on an oriented 6-manifold \(\N\) shall be termed extendible if there exists an oriented 7-manifold with boundary \(\M\) such that \(\del \M\) contains \(\N\) as a connected component and there exists a closed \sg\ 3-form \(\sph\) on \(\M\) such that \(\sph|_\N = \rh\).  Motivated by Donaldson's notion of \g-cobordism introduced in \cite{RoG2MwB}, two oriented 6-manifolds \((\N_1,\rh_1)\) and \((\N_2,\rh_2)\) equipped with closed, extendible, \slc\ (respectively \slr) 3-forms shall be termed \sg-cobordant if there exists an oriented 7-manifold \(\M\) with boundary \(\del\M = \N_1 \msm{\coprod} \ol{\N}_2\) and a closed \sg\ 3-form \(\sph\) on \(\M\) such that:
\ew
\sph|_{\N_1} = \rh_1 \et \sph|_{\N_2} = \rh_2
\eew
(where overline denotes orientation-reversal).

\begin{Thm}\label{htpy->cob}
Let \(\N\) be a 6-manifold and let \(\rh,\rh'\) be closed, extendible \slc\ (respectively \slr) 3-forms on \(\N\).  Suppose that \(\rh\) and \(\rh'\) are homotopic and lie in the same cohomology class. Then, \((\N,\rh)\) and \((\N,\rh')\) are \sg-cobordant.
\end{Thm}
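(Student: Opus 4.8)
The plan is to realise the \sg-cobordism on the cylinder \(\M = \N\x[0,1]\), whose boundary, in the product orientation, is \(\N\coprod\ol{\N}\); it then suffices to build a closed \sg\ 3-form \(\sph\) on \(\M\) whose pullbacks to the two ends are \(\rh\) and \(\rh'\). First I would sharpen the hypotheses. As \(\rh\) and \(\rh'\) are closed, represent a common class \(\al\in\dR{3}(\N)\), and are homotopic through \slc\ (resp.\ \slr) 3-forms, the composite inclusion \(\CL^3_{\si_0}(\al)\emb\Om^3_{\si_0}(\N)\) of \eref{HE} (with \(\si_0=\rh_-\), resp.\ \(\rh_+\)) is a homotopy equivalence, hence a bijection on \(\pi_0\); consequently \(\rh\) and \(\rh'\) may be joined by a path \((\rh_t)_{t\in[0,1]}\) of \emph{closed} \slc\ (resp.\ \slr) 3-forms, each representing \(\al\).

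Next I would produce a formal solution on \(\M\): a (not necessarily closed) \sg\ 3-form inducing \(\rh\) and \(\rh'\) on the two ends. Here I use the standard correspondence under which a path \((\om_t,\rh_t)\) of compatible pairs---\(\rh_t\) the \slc\ (resp.\ \slr) 3-form together with an associated stable 2-form \(\om_t\)---determines the \sg\ 3-form \(\sph_0=\dd t\w\om_t+\rh_t\) on the cylinder, and conversely. For the path \((\rh_t)\) above, the compatible 2-forms form, at each point, a nonempty (indeed contractible) set, so a smooth family \(\om_t\) exists and yields such a \(\sph_0\); since pullback to a slice annihilates \(\dd t\), one has \(\sph_0|_{t=0}=\rh\) and \(\sph_0|_{t=1}=\rh'\) irrespective of the choice of \(\om_t\). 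The extendibility hypothesis now enters: near each end it supplies a genuinely closed \sg\ 3-form inducing \(\rh\) (resp.\ \(\rh'\)), and after matching \(\om_t\) to these on small collars I may assume \(\sph_0\) is already closed near \(\del\M\), with the correct boundary values.

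The central step is to deform \(\sph_0\), rel \(\del\M\), into a closed \sg\ 3-form. Expanding \(\dd\sph_0=0\) on the cylinder shows closedness to be equivalent to \(\dd_\N\rh_t=0\), which holds automatically, together with \(\del_t\rh_t=\dd_\N\om_t\); because \([\rh_t]\equiv\al\) the left-hand side is exact, so this equation is solvable for \(\om_t\) merely as a 2-form, but its solutions need not be compatible with \(\rh_t\). Reconciling this cohomological constraint with the pointwise stability (openness) constraint on \(\om_t\) is exactly the situation addressed by the \(h\)-principle for closed \sg\ 3-forms of \cite{RhPfCSF}. Applying its relative form, rel the collars on which \(\sph_0\) is already a closed solution, deforms \(\sph_0\) to a closed \sg\ 3-form \(\sph\) on \(\M\) with \(\sph|_\N=\rh\) and \(\sph|_{\ol{\N}}=\rh'\), which is the desired cobordism.

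I expect this final deformation to be the main obstacle. Both hypotheses are genuinely needed and enter here as the necessary conditions that the \(h\)-principle renders sufficient: the equality \([\rh]=[\rh']\) is forced because the two ends of the cylinder are homotopic, so a closed form pulls back to cohomologous 3-forms there, while extendibility is exactly what allows the prescribed boundary values to be attained by closed forms. Two further points need care: the orientation conventions on \(\del\M\), so that the second end indeed carries \(\rh'\) on \(\ol{\N}\); and the uniformity of the argument across the \slc\ and \slr\ cases, both of which arise as hyperplane restrictions of \(\svph_0\) and so fit the same cylinder construction.
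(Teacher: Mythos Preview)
Your overall strategy matches the paper's: construct a \sg\ 3-form on the cylinder \([0,1]\x\N\) restricting to \(\rh,\rh'\) and closed on a collar of \(\del\M\), then invoke the relative \(h\)-principle for closed \sg\ 3-forms to deform it rel boundary to a genuine closed form.

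There is, however, a concrete error in your construction of the compatible family \(\om_t\). The fibre of \(\mc{E}_\pm\to\ww[\pm]{3}\T^*\N\) is \emph{not} contractible: in the \slc\ case it has the homotopy type of the space of signature-\((1,2)\) Hermitian forms on \(\bb{C}^3\), namely \(\rqt{\GL(3;\bb{C})}{\Un(1,2)}\simeq\bb{CP}^2\), and the \slr\ fibre is likewise non-contractible. So you cannot produce \(\om_t\) by a fibrewise section argument, and your subsequent ``matching'' of \(\om_t\) to the boundary data is then ungrounded, since two lifts of the same \(\rh\) need not be homotopic through lifts. The paper fixes this in one stroke: extendibility of \(\rh\) supplies an initial lift \(\om_0\) of \(\rh_0=\rh\) to \(\mc{E}_\pm\), and the covering homotopy property for the fibre bundle \(\mc{E}_\pm\to\ww[\pm]{3}\T^*\N\) extends this to a family \(\om_t\) covering the given homotopy \(\rh_t\). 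Extendibility thus enters \emph{here}, to start the lift, rather than only at the end.

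Two smaller points. First, your preliminary appeal to the 6-dimensional \(h\)-principle to make \(\rh_t\) closed in class \(\al\) is unnecessary: the paper works with an arbitrary homotopy \(\rh_t\) of \(\SL\) forms, with only the endpoints closed. Second, to obtain closedness near \(\del\M\) the paper does not attempt to match to an externally supplied closed collar; it simply adds an explicit correction, setting
\[
\sph=\dd t\w\om_t+\rh_t+[tf_1+(t-1)f_2]\,\dd_\N\om_t
\]
for bump functions \(f_1,f_2\) near \(t=0,1\). Where \(f_1\equiv1\) (with \(\rh_t,\om_t\) constant in \(t\)) this reads \(\dd(t\om_0)+\rh\), visibly closed and representing \(\al\), and similarly at \(t=1\); stability keeps \(\sph\) of \sg-type once the supports of \(f_1,f_2\) are chosen small enough.
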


I remark that, in contrast, the analogous result for \g-cobordisms is not known; see \cite{RoG2MwB}, particularly the discussion on p.\ 116.

The remainder of this paper investigates the topological properties of closed \slc\ and \slr\ 3-forms.  Firstly, it investigates when two closed \slc\ (respectively \slr) 3-forms are homotopic.  Secondly, motivated by \tref{htpy->cob}, it investigates when a given \slc\ (respectively \slr) 3-form is extendible.

Let \(\N\) be an oriented 6-manifold and let \(\mc{SL}_\bb{C}(\N)\) denote the set of homotopy classes of \slc\ 3-forms on \(\N\).  Since \(\SL(3;\bb{C})\) deformation retracts onto the simply connected subgroup \(\SU(3) \pc \SO(6)\), one may assign to each \(\SL(3;\bb{C})\) 3-form \(\rh\) a choice of spin structure on \(\N\), which depends only on the homotopy class of \(\rh\); see \sref{Htpic-SLC} for further details.  (Recall that, whilst a choice of spin structure technically depends on a choice of metric, this dependence is only superficial; see \cite[p.\ 82, Remark 1.9]{SG}).  Thus, writing \(\mc{S}{\kern-1pt}pin(\N)\) for the set of spin structures on \(\N\), there is a map:
\ew
\si: \mc{SL}_\bb{C}(\N) \to \mc{S}{\kern-1pt}pin(\N).
\eew

\begin{Thm}\label{SLC-htpy}
The map \(\si\) defines a bijective correspondence between homotopy classes of \slc\ 3-forms on \(\N\) (equivalently closed \slc\ 3-forms, or \slc\ 3-forms in any fixed degree 3 cohomology class) and spin structures on \(\N\).  In particular, \(\mc{SL}_\bb{C}(\N)\) is a torsor for the group \(\sch{1}{\N,\rqt{\bb{Z}}{2\bb{Z}}}\), i.e.\ \(\mc{SL}_\bb{C}(\N)\) admits a faithful, transitive action by \(\sch{1}{\N,\rqt{\bb{Z}}{2\bb{Z}}}\), and likewise for closed \slc\ 3-forms, or \slc\ 3-forms in a fixed cohomology class.
\end{Thm}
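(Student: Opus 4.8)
The plan is to reduce the statement to a section‑counting problem and then exploit the exceptional isomorphism \(\Spin(6) \cong \SU(4)\). First, by the homotopy equivalences \eqref{HE} in the case \(\si_0 = \rh_-\), the three sets of homotopy classes named in the statement — closed \slc\ 3-forms, \slc\ 3-forms in a fixed class, and all \slc\ 3-forms — coincide, so it suffices to work with \(\Om^3_{\rh_-}(\N)\). Fixing an orientation‑compatible metric contracts \(\GL_+(6;\bb{R})\) onto \(\SO(6)\) and \slc\ onto its maximal compact \(\SU(3) = \slc \cap \SO(6)\), so \(\Om^3_{\rh_-}(\N)\) is homotopy equivalent to the space of sections of \(P \times_{\SO(6)} (\SO(6)/\SU(3))\), where \(P\) is the oriented orthonormal frame bundle; equivalently, homotopy classes of \slc\ 3-forms are homotopy classes of \(\SU(3)\)-reductions of \(P\). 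I will take the well-definedness of \(\si\) on homotopy classes as established in \sref{Htpic-SLC}, since a path of \(\SU(3)\)-reductions \(Q_t\) induces a continuous, hence constant, family of spin structures \(Q_t \times_{\SU(3)} \Spin(6)\).

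The key computation will be the following. As \(\SU(3)\) is simply connected, its inclusion into \(\SO(6)\) lifts to \(\Spin(6)\), and under \(\Spin(6) \cong \SU(4)\) this lift identifies \(\SU(3)\) with the stabiliser in \(\SU(4)\) of a unit vector in \(\bb{C}^4\) (one checks on \(\Lambda^2\bb{C}^4 \cong \bb{C}^6\) that the induced action on \(\bb{R}^6\) is the standard one). Consequently \(\Spin(6)/\SU(3) = \SU(4)/\SU(3) \cong S^7\), and, quotienting by the antipodal deck action of \(\ker(\Spin(6) \to \SO(6)) = \{\pm 1\}\), one gets \(\SO(6)/\SU(3) \cong \bb{RP}^7\). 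In particular the fibre of the section problem has \(\pi_1 = \rqt{\bb{Z}}{2\bb{Z}}\) and vanishing \(\pi_k\) for \(2 \le k \le 6\), which already explains the expected \(\rqt{\bb{Z}}{2\bb{Z}}\)-cohomology torsor; but the clean argument uses \(S^7\) directly.

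With this in hand I would show that \(\si\) is a bijection. Given a spin structure, i.e.\ a \(\Spin(6)\)-lift \(\tld P \to P\), an \(\SU(3)\)-reduction of \(P\) inducing it is exactly a reduction of \(\tld P\) to \(\SU(3)\), i.e.\ a section of \(\tld P \times_{\Spin(6)} (\Spin(6)/\SU(3)) = \tld P \times_{\Spin(6)} S^7\), a bundle with \(6\)-connected fibre \(S^7\) over the \(6\)-manifold \(\N\). Since \(\pi_k(S^7) = 0\) for \(k \le 6\), obstruction theory gives that such a section exists and is unique up to homotopy through sections: the existence obstructions lie in \(\sch{k}{\N,\pi_{k-1}(S^7)}\) and the difference obstructions in \(\sch{k}{\N,\pi_k(S^7)}\), all of which vanish for \(k \le 6 = \dim\N\). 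Projecting such a section, and any such homotopy, down the double cover \(\tld P \times_{\Spin(6)} S^7 \to P \times_{\SO(6)} \bb{RP}^7\) shows that every spin structure is induced by an \slc\ 3-form (surjectivity of \(\si\)) and that any two \slc\ 3-forms inducing isomorphic spin structures are homotopic (injectivity of \(\si\)). Finally, since \(\mc{S}{\kern-1pt}pin(\N)\) is a torsor for \(\sch{1}{\N,\rqt{\bb{Z}}{2\bb{Z}}}\) whenever it is nonempty — and both sides are empty precisely when \(w_2(\N) \ne 0\) — the bijection \(\si\) transports this torsor structure to \(\mc{SL}_\bb{C}(\N)\).

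The main obstacle is conceptual rather than computational: recognising that passing to the double cover \(\Spin(6)\) replaces the fibre \(\SO(6)/\SU(3) \cong \bb{RP}^7\), whose nontrivial \(\pi_1\) would otherwise force a delicate analysis of the \(\pi_1\)-action on sections and of higher obstructions, by the \(6\)-connected fibre \(S^7\), over which the section problem is trivial in the range \(k \le \dim\N\). Once this substitution is made, the residual care needed is only bookkeeping: verifying that sections of the \(S^7\)-bundle correspond bijectively and homotopy‑compatibly to \(\SU(3)\)-reductions inducing a fixed spin structure (which holds since \(\SU(3) \to \SO(6)\) is injective, so an \(\SU(3)\)-reduction of \(\tld P\) maps isomorphically onto one of \(P\)), and confirming that the embedding \(\SU(3) \hookrightarrow \SU(4)\) realises the standard one into \(\SO(6)\).
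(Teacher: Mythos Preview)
Your proposal is correct and follows essentially the same route as the paper: reduce to \(\SU(3)\)-reductions of the \(\SO(6)\)-frame bundle, lift \(\SU(3)\) into \(\Spin(6)\cong\SU(4)\) using simple connectedness, identify \(\Spin(6)/\SU(3)\cong S^7\), and use obstruction theory on the resulting \(S^7\)-bundle over a fixed spin lift to get existence and uniqueness of sections up to homotopy, then project down. The paper's proof differs only in presentation---it phrases the bijection in terms of \(\SU(3)\)-subbundles of \(\mc{Q}\) and gives a slightly more explicit criterion (namely that \(q^{-1}(\mc{R})\cong\mc{R}\times\{\pm1\}\)) for when a given spin structure is the one induced by \(\mc{R}\)---but the substance is identical.
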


I remark that \tref{SLC-htpy} corrects an error in Donaldson's paper \cite[p.\ 116]{RoG2MwB}, where it is stated that any two \slc\ 3-forms on a given oriented 6-manifold are homotopic.

Regarding the extendibility of \slc\ 3-forms, this paper proves that:

\begin{Thm}\label{slc-ext-conds}
Let \(\N\) be an oriented 6-manifold.  If the Euler class \(e(\N) = 0\), then any \slc\ 3-form on \(\N\) is extendible.  In particular:
\begin{itemize}
\item If \(\N\) is open, then any \slc\ 3-form on \(\N\) is extendible.
\item If \(\N\) is closed and the Euler characteristic \(\ch(\N) = 0\), then any \slc\ 3-form on \(\N\) is extendible.
\end{itemize}
Conversely, if \(e(\N) \ne 0\) and in addition \(b^2 = 0\) (i.e.\ \(\sch{2}{\N;\bb{Z}}\) and \(\sch{4}{\N;\bb{Z}}\) are pure torsion), then no \slc\ 3-form on \(\N\) is extendible.
\end{Thm}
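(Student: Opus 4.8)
The plan is to show that extendibility is governed by a condition intrinsic to \(\N\), the freedom in the choice of bounding \(\M\) being illusory. First I would record the two elementary facts underlying everything: that \(\rh\) induces an \acs\ \(J\) on \(\N\) for which \(c_1(\T\N,J) = 0\) and \(c_3(\T\N,J) = e(\N)\); and that extendibility forces \(\rh\) to be closed, since \(\rh = \sph|_\N = i_\N^*\sph\) and \(\dd\sph = 0\). The heart of the argument is the reduction: \(\rh\) is extendible \iff\ there exists a global 2-form \(\om\) on \(\N\) completing \(\rh\) to a \sg\ 3-form \(\rh + \dd t\w\om\). For the forward direction I would restrict a putative extension \((\M,\sph)\) to a collar of \(\N\) and set \(\om := i_\N^*(\iota_\nu\sph)\) for the normal \(\nu\), which is by construction such a completion. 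For the reverse direction I would run the explicit collar construction on \(\N\x[0,\de]\): with \(\rh_t := \rh + t\,\dd\om\) and \(\sph := \rh_t + \dd t\w\om\), one checks \(\dd\sph = \dd_\N\rh + \dd t\w(\dot\rh_t - \dd_\N\om) = 0\) using \(\dd\rh = 0\), while for \(\de\) small the pair \((\rh_t,\om)\) stays in the open \sg-orbit; thus \(\sph\) is a closed \sg\ 3-form with \(\sph|_{\N\x\{0\}} = \rh\) and \(\N\) a boundary component. This shows the obstruction is entirely local to \(\N\), independent of \(\M\).

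Next I would convert the existence of \(\om\) into a characteristic-class condition. The compatible completions of a fixed \(\rh\) form (up to an irrelevant scaling) a single orbit \(\Stab(\rh)/\SU(1,2) = \SL(3;\bb{C})/\SU(1,2)\), which deformation retracts onto \(\SU(3)/\Un(2) \simeq \bb{CP}^2\); hence a global \(\om\) is a section of the associated \(\bb{CP}^2\)-bundle \(\bb{P}(\T\N,J)\), i.e.\ a complex line subbundle \(L\cc(\T\N,J)\). Writing \(E := (\T\N,J)\), a line subbundle \(L\) with \(c_1(L) = x\) exists \iff\ \(E\ts L^{-1} = \Hom(L,E)\) admits a nowhere-zero section, and since this bundle has complex rank \(3 = \tfrac12\dim\N\) the sole obstruction is its top Chern class. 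A Chern-root computation with \(c_1(E) = 0\) gives
\ew
c_3\lt(E\ts L^{-1}\rt) = e(\N) - c_2(\N)\,x - x^3,
\eew
so \(\rh\) is extendible \iff\ there exists \(x\in\sch{2}{\N;\bb{Z}}\) with \(e(\N) = x^3 + c_2(\N)\,x\).

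From this criterion both halves follow at once. If \(e(\N) = 0\) then \(x = 0\) solves the equation, so \(\rh\) is extendible; moreover \(e(\N) = 0\) holds automatically when \(\N\) is open (a nowhere-zero vector field exists) and when \(\N\) is closed with \(\ch(\N) = 0\) (as \(\langle e(\N),[\N]\rangle = \ch(\N)\) and \(\sch{6}{\N;\bb{Z}}\cong\bb{Z}\)). Conversely, if \(b^2 = 0\) then \(\sch{2}{\N;\bb{Z}}\) and \(\sch{4}{\N;\bb{Z}}\) are pure torsion, so for every \(x\in\sch{2}{\N;\bb{Z}}\) both \(x^3\) and \(c_2(\N)\,x\) are torsion classes in the torsion-free group \(\sch{6}{\N;\bb{Z}}\cong\bb{Z}\), hence vanish; the right-hand side is then \(0\neq e(\N)\), so no \(x\) solves the equation and \(\rh\) is not extendible.

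The step I expect to be most delicate is the reduction in the first paragraph: one must verify that completing a split \slc\ 3-form to a \sg\ 3-form requires precisely an \emph{indefinite} (Hermitian signature \((1,2)\)) compatible 2-form \(\om\)—so that the relevant fibre is \(\SL(3;\bb{C})/\SU(1,2)\simeq\bb{CP}^2\) rather than the contractible space of positive-definite metrics appearing in the compact \g\ case—and that the induced boundary structure is genuinely of \slc\ (not \slr) type, which is what pins down the sign of \(\langle\nu,\nu\rangle\). Once this algebra of split \sg-forms is correctly in place, the passage to a complex line subbundle, the identification of the single Euler-class obstruction, and the Chern computation are all routine.
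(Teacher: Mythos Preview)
Your argument is correct and in fact proves more than the theorem asserts: you obtain the exact criterion that \(\rh\) is extendible \iff\ there exists \(x\in\sch{2}{\N;\bb{Z}}\) with \(e(\N)=c_2(\N)\,x+x^3\). The paper takes a more elementary route. Both arguments pass through the same key reduction (your \(\bb{CP}^2\)-fibre observation is exactly the paper's \pref{C-ext-prop}: extendibility \(\Leftrightarrow\) existence of a complex line subbundle of \((\T\N,J_\rh)\)), but thereafter diverge. For the forward direction the paper simply notes that \(e(\N)=0\) yields a nowhere-zero vector field \(X\), and then \(\langle X,J_\rh X\rangle\) is the required complex line; you instead specialise your Chern-class criterion at \(x=0\). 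For the converse, the paper splits \((\T\N,J_\rh)=\cal{L}_1\oplus\cal{L}_2\) and uses the Whitney formula \(e(\N)=e(\cal{L}_1)\cup e(\cal{L}_2)\) to conclude that one of \(\sch{2}{\N;\bb{Z}}\), \(\sch{4}{\N;\bb{Z}}\) must contain a non-torsion element; you reach the same conclusion via vanishing of \(c_2(\N)\,x+x^3\) in the torsion-free group \(\sch{6}{\N;\bb{Z}}\). Your approach buys the sharper necessary-and-sufficient condition, at the cost of a small amount of obstruction theory (the fact that \(c_3\) is the \emph{only} obstruction to a nowhere-zero section of a rank-3 complex bundle over a 6-manifold); the paper's approach is quicker for the theorem as stated.

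Two minor points. First, your side remark that ``extendibility forces \(\rh\) to be closed'' is not needed and is not quite how the paper sets things up: the formal \dref{ext-def} is purely in terms of the lift \(\om\), and \pref{ext-off} shows this is equivalent to the boundary-value formulation with or without closedness. Second, when you identify the fibre as \(\SL(3;\bb{C})/\SU(1,2)\), note that the literal fibre \(\mc{E}_-|_\rh\) also contains the (contractible) kernel of \(\cal{J}_\rh\); this does not affect the homotopy type, but it is worth saying so explicitly.
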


Turning to the case of \slr\ 3-forms, by using the results of Thomas \cite[Cor.\ 1.7]{FoTkPoM}, a lower bound on the number of homotopy classes of \slr\ 3-forms is obtained:

\begin{Thm}\label{slr-htpy}
Let \(\N\) be a closed, oriented, 6-manifold with \(e(\N) = 0\) and suppose \(w_2(\N)^2 = 0\).  Write \(\rh_2:\sch{4}{\N;\bb{Z}} \to \sch{4}{\N;\rqt{\bb{Z}}{2\bb{Z}}}\) for reduction modulo 2 and define:
\ew
\sch{4}{\N;\bb{Z}}_{\bot w_2} = \lt\{ u \in \sch{4}{\N;\bb{Z}} ~\m|~ \rh_2u \cup w_2(\N) = 0 \rt\}.
\eew
Then, there is an injection from \(\rqt{\sch{4}{\N;\bb{Z}}_{\bot w_2}}{\text{2-torsion}}\) into the set of homotopy classes of \slr\ 3-forms on \(\N\) (equivalently closed \slr\ 3-forms, or \slr\ 3-forms in any fixed degree 3 cohomology class).  In particular, if \(\N\) is spin and \(b^4(\N)>0\), then each of these sets is infinite.
\end{Thm}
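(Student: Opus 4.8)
The plan is to translate the statement into a problem about oriented \(3\)-plane fields and then to invoke Thomas's enumeration of such fields \cite[Cor.\ 1.7]{FoTkPoM}. First, by the homotopy equivalences \eref{HE}, proved for \(\rh_+\) in \cite{TRhPfCSL3R23F}, the three sets in the statement (closed \slr\ 3-forms, \slr\ 3-forms in a fixed class, and all \slr\ 3-forms) are homotopy equivalent, so it suffices to classify homotopy classes of \slr\ 3-forms. I would then identify these with oriented tangent \(3\)-plane fields. The stabiliser of \(\rh_+\) in \(\GL_+(6;\bb{R})\) is \(\SL(3;\bb{R})\times\SL(3;\bb{R})\) — the involution swapping the two summands of \(\rh_+ = \th^{123} + \th^{456}\) is orientation-reversing, hence excluded — and this retracts onto its maximal compact subgroup \(\SO(3)\times\SO(3)\pc\SO(6)\). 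Consequently a \(\rh_+\)-form is, up to homotopy, a reduction of the structure group of \(\T\N\) to \(\SO(3)\times\SO(3)\), i.e.\ an orthogonal splitting \(\T\N = E\oplus F\) into oriented rank-\(3\) subbundles (the ambient orientation designating which summand is which). Thus homotopy classes of \slr\ 3-forms correspond bijectively to homotopy classes of sections of the oriented Grassmann bundle \(\oGr_3(\T\N) \to \N\), with fibre \(B = \oGr_3(\bb{R}^6) = \SO(6)/(\SO(3)\times\SO(3))\).

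To locate the relevant cohomology I would compute the low homotopy groups of \(B\) from the fibration \(\SO(3)\times\SO(3) \to \SO(6) \to B\). Using \(\pi_k(\SO(3)) = \pi_k(S^3)\) for \(k \ge 2\) together with \(\pi_1(\SO(6)) = \bb{Z}/2\), \(\pi_3(\SO(6)) = \bb{Z}\), \(\pi_4(\SO(6)) = 0\), \(\pi_5(\SO(6)) = \bb{Z}\), \(\pi_6(\SO(6)) = 0\), and the fact that the inclusion-induced maps \((\bb{Z}/2)^2 \to \bb{Z}/2\) on \(\pi_1\) and \(\bb{Z}^2 \to \bb{Z}\) on \(\pi_3\) are both \((a,b) \mapsto a+b\), the long exact sequence gives \(\pi_1(B) = \pi_3(B) = 0\), \(\pi_2(B) = \bb{Z}/2\), and crucially \(\pi_4(B) = \ker(\bb{Z}^2 \to \bb{Z}) \cong \bb{Z}\), while \(\pi_5(B)\) has free rank \(1\) and \(\pi_6(B) = (\bb{Z}/2)^2\). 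The free group \(\pi_4(B) = \bb{Z}\) is the source of the infinitely many classes: the primary difference of two sections agreeing on the \(3\)-skeleton lives in \(\sch{4}{\N;\pi_4(B)} = \sch{4}{\N;\bb{Z}}\), whereas the remaining obstruction groups \(\sch{5}{\N;\pi_5(B)}\) and \(\sch{6}{\N;\pi_6(B)} = \sch{6}{\N;(\bb{Z}/2)^2}\) that govern whether such a difference is realised and detected are \(2\)-primary. This already predicts the shape of the answer.

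The heart of the proof is to run the obstruction theory, which is exactly Thomas's setting. The hypothesis \(e(\N) = 0\) is necessary for a base field \(s_0\) to exist: for a splitting into odd-rank oriented bundles \(e(\T\N) = e(E) \cup e(F)\) with \(e(E), e(F)\) \(2\)-torsion, forcing \(\ch(\N) = 0\); with the auxiliary hypothesis \(w_2(\N)^2 = 0\) controlling the secondary obstruction, \cite[Cor.\ 1.7]{FoTkPoM} supplies such an \(s_0\). Fixing \(s_0\), for each \(u \in \sch{4}{\N;\bb{Z}}\) I would build a field \(s_u\) with primary difference \(u\); the obstruction to extending \(s_u\) over the top cell lies in \(\sch{6}{\N;\pi_6(B)}\) and is a secondary \(\mathrm{Sq}^2\)-type operation on \(\rh_2 u\), which by the Wu formula on the closed \(6\)-manifold \(\N\) equals \(\rh_2 u \cup w_2(\N) \in \sch{6}{\N;\rqt{\bb{Z}}{2\bb{Z}}}\). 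Hence \(s_u\) exists precisely when \(u \in \sch{4}{\N;\bb{Z}}_{\bot w_2}\). The assignment \(u \mapsto [s_u]\) then descends to \(\rqt{\sch{4}{\N;\bb{Z}}_{\bot w_2}}{\text{2-torsion}}\) — a \(2\)-torsion primary difference being absorbed by the \(2\)-primary groups \(\pi_5(B), \pi_6(B)\) — and is injective on this quotient, yielding the claimed injection.

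For the final assertion, if \(\N\) is spin then \(w_2(\N) = 0\), so the hypothesis \(w_2(\N)^2 = 0\) holds and the constraint \(\rh_2 u \cup w_2(\N) = 0\) is vacuous; thus \(\sch{4}{\N;\bb{Z}}_{\bot w_2} = \sch{4}{\N;\bb{Z}}\) and the quotient by \(2\)-torsion has free rank \(b^4(\N)\), so it is infinite whenever \(b^4(\N) > 0\), and the injection then shows all three sets are infinite. The step I expect to be the main obstacle is the faithful translation of Thomas's obstruction-theoretic enumeration into the precise group \(\rqt{\sch{4}{\N;\bb{Z}}_{\bot w_2}}{\text{2-torsion}}\): namely confirming that the top obstruction to realising a primary difference \(u\) is exactly \(\rh_2 u \cup w_2(\N)\), and that the indeterminacy from \(\pi_5(B)\) and \(\pi_6(B)\) collapses exactly the \(2\)-torsion and no more, so that the map is genuinely injective. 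Some care will also be required with the possibly twisted coefficients arising from the action of \(\pi_1(\N)\) through the bundle \(\oGr_3(\T\N)\), although the hypotheses \(e(\N) = 0\) and \(w_2(\N)^2 = 0\) are precisely what neutralise the troublesome secondary terms.
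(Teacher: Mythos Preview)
Your reduction to oriented tangent 3-plane fields is exactly the paper's \pref{slr-oGr3}, and your instinct to invoke Thomas is correct. But you are working much harder than necessary for the injection, and the step you yourself flag as the main obstacle—controlling the obstruction-theoretic indeterminacy so that precisely the 2-torsion is collapsed—is exactly what the paper sidesteps.

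The paper does not compute the homotopy groups of \(\oGr_3(\bb{R}^6)\) or track primary difference cocycles at all. Instead it uses the full strength of Thomas's \cite[Cor.\ 1.7]{FoTkPoM}, not merely the existence of a single base field: for every \(u \in \sch{4}{\N;\bb{Z}}_{\bot w_2}\) Thomas produces an oriented \emph{spin} rank-3 distribution \(E\) on \(\N\) with \(q_1(E) = 2u\), where \(q_1\) is the first spin characteristic class (so \(p_1 = 2q_1\)). The injection is then immediate: \(q_1\) is a characteristic class, hence a homotopy invariant of the subbundle, so if the distributions attached to \(u\) and \(u'\) are homotopic then \(2u = 2u'\), i.e.\ \(u - u'\) is 2-torsion. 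That is the entire argument—no computation of \(\pi_k(B)\), no secondary operations, no analysis of indeterminacy in \(\sch{5}{\N;\pi_5(B)}\) or \(\sch{6}{\N;\pi_6(B)}\).

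Your route could in principle be completed, but it amounts to re-deriving the content of Thomas's paper rather than citing it; and the delicate claim you single out (that the \(\pi_5\) and \(\pi_6\) indeterminacies absorb exactly the 2-torsion and no more, and that the top obstruction is literally \(\rh_2 u \cup w_2(\N)\)) is replaced in the paper by the trivial observation that a characteristic class is constant on homotopy classes. The moral: rather than building \(s_u\) by hand and proving injectivity via obstruction bookkeeping, let Thomas hand you the distributions with prescribed \(q_1\), and let \(q_1\) itself witness that they are pairwise non-homotopic.
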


As an immediate corollary of \tref{slr-htpy}, one obtains:
\begin{Cor}\label{spin-cor}
Let \(\N\) be a closed, oriented, spin 6-manifold.  Then, \(\N\) admits \slr\ 3-forms \iff\ \(e(\N) = 0\).
\end{Cor}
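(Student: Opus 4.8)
The plan is to prove the two implications of the equivalence separately, invoking \tref{slr-htpy} for one direction and a direct characteristic-class computation for the other. For the implication ``\(e(\N) = 0 \Rightarrow \N\) admits \slr\ 3-forms'', I would argue as follows. Since \(\N\) is spin, \(w_2(\N) = 0\), and in particular \(w_2(\N)^2 = 0\); together with the hypothesis \(e(\N) = 0\), this shows the hypotheses of \tref{slr-htpy} are satisfied. That theorem then supplies an injection of \(\rqt{\sch{4}{\N;\bb{Z}}_{\bot w_2}}{\text{2-torsion}}\) into the set of homotopy classes of \slr\ 3-forms on \(\N\). The domain of this injection is the underlying set of an abelian group (the quotient of the subgroup \(\sch{4}{\N;\bb{Z}}_{\bot w_2}\) by its 2-torsion) and is therefore nonempty, containing at least the class of \(0\); hence the target is nonempty and \(\N\) admits \slr\ 3-forms. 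The refined conclusion \(b^4(\N) > 0\) of \tref{slr-htpy} plays no role here, as only nonemptiness is required.

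For the converse ``\(\N\) admits \slr\ 3-forms \(\Rightarrow e(\N) = 0\)'', the spin hypothesis is in fact not needed; the argument applies to any closed oriented 6-manifold. The key structural input is that the stabiliser of \(\rh_+\) in \(\GL_+(6;\bb{R})\) is \(\SL(3;\bb{R})^2\), embedded block-diagonally on \(\bb{R}^3 \ds \bb{R}^3\). Consequently an \slr\ 3-form on \(\N\) is the same datum as a reduction of the frame bundle that splits the tangent bundle as \(\T\N = E_1 \ds E_2\) into two oriented rank-3 subbundles (each \(\SL(3;\bb{R})\) factor being the stabiliser of a volume form on \(\bb{R}^3\), which topologically amounts to an orientation of the corresponding rank-3 bundle). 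I would then combine two standard facts: the Whitney sum formula \(e(\N) = e(E_1) \cup e(E_2)\), and the vanishing of twice the Euler class of an oriented real bundle of odd rank, so that \(2e(E_1) = 0\). This yields
\ew
2e(\N) = \lt(2e(E_1)\rt) \cup e(E_2) = 0.
\eew
Since \(\N\) is closed and oriented, \(\sch{6}{\N;\bb{Z}}\) is torsion-free (a sum of copies of \(\bb{Z}\), one per connected component), so \(e(\N) = 0\), as required.

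The only genuinely delicate step is the first part of the converse: identifying the stabiliser of \(\rh_+\) with \(\SL(3;\bb{R})^2\) and upgrading this to a bona fide reduction of structure group, so that \(E_1\) and \(E_2\) are globally defined oriented bundles to which the Whitney formula applies. Once this splitting is in hand the remainder is a two-line computation. Equivalently, one could phrase the converse through the Euler characteristic: \(\ch(\N) = \langle e(E_1) \cup e(E_2), [\N]\rangle\) is a 2-torsion integer and hence zero, and on a closed oriented 6-manifold \(e(\N) = 0\) is equivalent to \(\ch(\N) = 0\).
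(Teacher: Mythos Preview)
Your proposal is correct and follows the paper's approach. The forward direction via \tref{slr-htpy} with \(w_2(\N)=0\) is exactly how the paper deduces the corollary, and your converse spells out the argument the paper leaves implicit (the remark before the proof of \tref{slr-htpy} that an oriented rank~3 distribution forces \(e(\N)=0\), combined with \pref{slr-oGr3}); your Whitney-sum computation with \(2e(E_1)=0\) and the torsion-freeness of \(\sch{6}{\N;\bb{Z}}\) is precisely the content of that ``clearly''.
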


The paper ends by investigating the extendibility of \slr\ 3-forms.  In \sref{htpic-slr}, infinitely many distinct homotopy classes of (closed) non-extendible \slr\ 3-forms are shown to exist on the manifold \(\bb{T}^2 \x \mc{E}\), where \(\mc{E}\) denotes the Enriques surface, while in \sref{ext-slr}, 652 distinct homotopy classes of (closed) extendible \slr\ 3-forms are constructed on the manifold \(\bb{T}^6\).

The results of the present paper were obtained during the author's doctoral studies, which where supported by EPSRC Studentship 2261110.\\

\section{Preliminaries}

\subsection{Elementary properties of \(\mb{\tld{\Gg}_2}\), \(\mb{\SL(3;\bb{C})}\) and \(\mb{\SL(3;\bb{R})^2}\) forms}

Recall from \cite[\S2, Thm.\ 2]{MwEH} that the stabiliser \sg\ of \(\svph_0\) in \(\GL_+(7;\bb{R})\) lies in \(\SO(3;4)\).  In particular, \sg\ preserves the metric:
\ew
\tld{g}_0 = \lt(\th^1\rt)^{\ts2} + \lt(\th^2\rt)^{\ts2} + \lt(\th^3\rt)^{\ts2} - \lt(\th^4\rt)^{\ts2} - \lt(\th^5\rt)^{\ts2} - \lt(\th^6\rt)^{\ts2} - \lt(\th^7\rt)^{\ts2}.
\eew
(Indeed, one may compute that, writing \(\tld{\Hs}_0\) for the Hodge star induced by \(\tld{g}_0\) and the usual orientation on \(\bb{R}^7\), the identity \(\svps_0 = \tld{\Hs}_0\svph_0\) holds; see \cite[Ch.\ 1.C]{EM} for an exposition of the elementary properties of pseudo-Riemannian metrics: it is this identity which underpins the equivalence between \sg\ 3-forms and \sg\ 4-forms.)  It follows that given an oriented 7-manifold \(\M\), any \sg\ 3-form \(\sph\) on \(\M\) induces a pseudo-Riemannian metric of signature \((3,4)\) on \(\M\), denoted \(g_{\sph}\).  Now, let \(x \in \M\) and fix a hyperplane \(\bb{B} \pc \bb{R}^7\).  Since \(g_{\sph}|_x\) is non-degenerate, the space:
\ew
\bb{B}^\bot = \lt\{ u \in \T_x \M ~\m|~ \text{for all } w \in \bb{B}: g_{\sph}|_x(u,w) = 0\rt\}
\eew
is a 1-dimensional subspace of \(\T_x \M\) such that \(\lt(\bb{B}^\bot\rt)^\bot = \bb{B}\).  Let \(u\) be any non-zero element of \(\bb{B}^\bot\).  Since \(g_{\sph}\) is split-signature, it follows that \(g_{\sph}|_x(u,u)\) may be either positive, negative, or zero (a condition which is independent of choice of \(u \in \bb{B}^\bot \osr \{0\}\)).  Say that \(\bb{B}\) is spacelike, timelike, or null according to whether \(g_{\sph}|_x(u,u)\) is positive, negative or zero.

This paper shall consider the cases where \(\bb{B}\) is either spacelike or timelike.  In these cases, one can write \(\T_x \M = \bb{B}^\bot \ds \bb{B}\).  Given a choice of orientation of \(\bb{B}\) (equivalently, a choice of orientation of the 1-dimensional subspace \(\Ann(\bb{B}) \pc \T_x^* \M\)), one may choose a correctly oriented generator \(\th\) of \(\Ann(\bb{B})\) and write:
\ew
\sph|_x = \th \w \om + \rh,
\eew
where \(\om \in \ww{2}\bb{B}^*\), \(\rh \in \ww{3}\bb{B}^*\), and the embedding \(\ww{p}\bb{B}^* \emb \ww{p}\T^*_x \M\) is defined by declaring \(u \hk \al = 0\) for all \(u \in \bb{B}^\bot\) and \(\al \in \ww{p}\bb{B}^*\).  Then, (see \cite[Prop.\ 9.6]{RhPfCSF}) \(\rh\) is either an \slc\ 3-form or an \slr\ 3-form, depending on whether \(\bb{B}\) is spacelike or timelike.  Moreover, the normal component \(\om\) can be characterised in terms of the algebraic structures induced by \(\rh\), as I now describe.

Begin with the spacelike case.  Since \(\SL(3;\bb{C}) \pc \GL(3;\bb{C})\), \(\rh\) defines a complex structure \(J_\rh\) on \(\bb{B}\); explicitly, in the case of \(\rh_- \in \ww{3}\lt(\bb{R}^6\rt)^*\), \(J_{\rh_-}\) is the standard \acs\ defined by declaring \(\th^1 + i \th^2\), \(\th^3 + i \th^4\) and \(\th^5 + i \th^6\) to be a basis of \((1,0)\)-forms on \(\lt(\bb{R}^6,J_{\rh_-}\rt)\).  Thus, \(\rh\) also defines a map:
\e\label{calJrh}
\bcd[row sep = 0pt]
\cal{J}_\rh:\ww{2}\bb{B}^* \ar[r] & \ss{2}\bb{B}^*\\
\om \ar[r, maps to] & \Big\{(a,b) \mt -\frac{1}{2}\lt[\om(J_\rh a,b) + \om(J_\rh b,a)\rt]\Big\}.
\ecd
\ee
The following was proven by the author in \cite{RhPfCSF}:
\begin{Prop}[{\cite[Prop.\ 9.24]{RhPfCSF}}]\label{SRP}
Let \(\rh\) be an \(\SL(3;\bb{C})\) 3-form on \(\bb{B}\).  Then, \(\th \w \om + \rh\) is a \sg\ 3-form on \(\T_x \M\) \iff\ the symmetric bilinear form \(\cal{J}_\rh \om\) on \(\bb{B}\) has signature \((2,4)\).
\end{Prop}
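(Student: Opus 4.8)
The plan is to compute the symmetric bilinear form canonically induced by $\sph = \th\w\om+\rh$ and to identify its restriction to $\bb{B}$ with a positive multiple of $\cal{J}_\rh\om$; the equivalence of signatures then drops out once the normal direction and the (non)degeneracy of $\sph$ are controlled. Since the \slc\ 3-forms constitute a single $\GL_+(6;\bb{R})$-orbit, I may normalise $\rh = \rh_-$, so that $J_\rh$ is the standard \acs, and take $\th = \th^7$: applying the transformation carrying $\rh$ to $\rh_-$, extended by the identity on the normal line to an element of $\GL_+(7;\bb{R})$ fixing $\th^7$, transforms $\om$, $\cal{J}_\rh\om$ and $\sph$ equivariantly, preserving both the \sg-property of $\sph$ and the signature of $\cal{J}_\rh\om$. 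Note also that $\sph|_\bb{B} = \rh$, since $\th^7$ annihilates $\bb{B}$.

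For a stable 3-form $\si$ on $\bb{R}^7$, recall Bryant's bilinear form $B_\si(u,v) = \tfrac{1}{6}(u\hk\si)\w(v\hk\si)\w\si \in \ww{7}(\bb{R}^7)^*$, whose signature (after trivialising $\ww{7}$ by the orientation) is $(7,0)$ for a \g\ 3-form and $(3,4)$ for a \sg\ 3-form, and which agrees with $g_\si$ up to a positive conformal factor. Writing $\al_u = u\hk\om$ and $\be_u = u\hk\rh$ and using $u\hk\sph = -\th^7\w\al_u + \be_u$ for $u \in \bb{B}$, expansion gives, for $u,v \in \bb{B}$,
\[
B_\sph(u,v) = \tfrac{1}{6}\,\th^7 \w \big[\be_u\w\be_v\w\om - \al_u\w\be_v\w\rh - \be_u\w\al_v\w\rh\big],
\]
each bracketed term being linear in $\om$ and quadratic in $\rh$. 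Invoking Hitchin's identity $(v\hk\rh)\w\rh = (J_\rh v)\hk\nu$ (up to the fixed normalisation of $\rh$, with $\nu$ the induced volume form on $\bb{B}$) together with $\ga\w(w\hk\nu) = \ga(w)\,\nu$, the middle two terms collapse to multiples of $\om(u,J_\rh v)$ and $\om(v,J_\rh u)$; the same device, applied by contracting the identity $u\hk[\rh\w(v\hk\rh)\w\om] = 0$, reduces $\be_u\w\be_v\w\om$ to the same expressions. Hence $B_\sph|_\bb{B} = c\,\cal{J}_\rh\om \ts (\th^7\w\nu)$ for a nonzero constant $c$, and the Kähler case $\om = \th^{12}+\th^{34}+\th^{56}$ — for which $\sph$ is the standard positive-definite \g\ 3-form and both sides have signature $(6,0)$ — fixes $c > 0$.

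Necessity is now immediate: if $\sph$ is a \sg\ 3-form then $g_\sph$ has signature $(3,4)$, and since $\sph|_\bb{B} = \rh$ is an \slc\ 3-form, the trichotomy recalled before \pref{SRP} (see \cite[Prop.\ 9.6]{RhPfCSF}) forces $\bb{B}$ to be spacelike — it is non-null as $\rh$ is stable, and not timelike as that would make $\rh$ an \slr\ 3-form — so $g_\sph|_\bb{B}$ has signature $(2,4)$; with $c>0$ this yields $\cal{J}_\rh\om$ of signature $(2,4)$. For the converse I record two structural facts about $\cal{J}_\rh$: a short check on $\rh_-$ shows its kernel is exactly $\{w\hk\rh : w \in \bb{B}\}$, the space of forms of type $(2,0)+(0,2)$, and $\cal{J}_\rh$ restricts to an $\SL(3;\bb{C})$-equivariant isomorphism from $(1,1)$-forms onto Hermitian symmetric forms (injective since $(1,1)\cap\ker\cal{J}_\rh = 0$, hence onto by equality of dimensions). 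Crucially, the shears $e_7 \mapsto e_7 + w$ lie in $\GL_+(7;\bb{R})$, fix $\rh$, and translate $\om$ precisely through $\ker\cal{J}_\rh$, so they leave $\cal{J}_\rh\om$ unchanged.

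For sufficiency, suppose $\cal{J}_\rh\om$ has signature $(2,4)$. After a shear I may assume $\om$ is of type $(1,1)$, with $\cal{J}_\rh\om$ still of signature $(2,4)$, hence a Hermitian form of signature $(1,2)$. On Hermitian forms the only $\SL(3;\bb{C})$-invariant is the determinant, which the $\bb{R}_{>0}$-action — realised by scaling the normal line, again inside $\GL_+(7;\bb{R})$ and fixing $\rh$ — scales away; thus $\SL(3;\bb{C})\x\bb{R}_{>0}$ acts transitively on the signature-$(2,4)$ forms, and via the isomorphism above I may reduce $\om$ to $\om_* = \th^{12}-\th^{34}-\th^{56}$. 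It then suffices to verify by a single explicit computation that $\sph_* = \th^7\w\om_* + \rh_-$ is a \sg\ 3-form, equivalently that $g_{\sph_*}$ has signature $(3,4)$ so that $\sph_*$ lies in the $\GL_+(7;\bb{R})$-orbit of $\svph_0$. The main obstacle is the metric computation of the second paragraph — in particular the collapse of the $\rh$-quadratic term $\be_u\w\be_v\w\om$ to $\cal{J}_\rh\om$, and above all the determination that $c > 0$: with $c < 0$ necessity would instead force signature $(4,2)$, and it is exactly the sign of $c$ that separates the orbit of $\svph_0$ (signature $(3,4)$) from that of $-\svph_0$ (signature $(4,3)$), thereby pinning down the numerical signature $(2,4)$ asserted in the statement.
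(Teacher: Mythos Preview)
The paper does not prove this proposition; it is quoted verbatim from the author's companion paper \cite[Prop.\ 9.24]{RhPfCSF}, so there is no in-paper argument to compare against. Your proposal is therefore being assessed on its own merits.

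The overall strategy is sound and natural: normalise to $\rh = \rh_-$ by transitivity of $\GL_+(6;\bb{R})$ on \slc\ 3-forms, compute Bryant's bilinear form $B_{\sph}$ restricted to $\bb{B}$, identify it as a positive multiple of $\cal{J}_\rh\om$, and read off both implications from signatures. The sufficiency half is particularly clean: the observation that shears $e_7 \mapsto e_7 + w$ translate $\om$ through $\ker\cal{J}_\rh = \{w\hk\rh\}$ while fixing $\rh$ is correct (one checks $T^*\sph = \th^7\w(\om + w\hk\rh) + \rh$), and the subsequent reduction via the transitive $\SL(3;\bb{C})\x\bb{R}_{>0}$-action on signature-$(1,2)$ Hermitian forms is valid. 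For necessity, invoking the trichotomy from \cite[Prop.\ 9.6]{RhPfCSF} to exclude the timelike case is legitimate; the exclusion of the null case (``non-null as $\rh$ is stable'') is true but merits a sentence of justification or a citation, since it is not immediate that restriction of a \sg\ 3-form to a null hyperplane is never stable.

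The one genuinely thin point is precisely where you flag it. Your contraction identity $u\hk[\rh\w(v\hk\rh)\w\om]=0$ yields
\[
\be_u\w\be_v\w\om = \rh\w(u\hk v\hk\rh)\w\om + (u\hk\om)\w(v\hk\rh)\w\rh,
\]
and while the second term on the right collapses via Hitchin's identity as you say, the first term $\rh\w(u\hk v\hk\rh)\w\om$ is not yet visibly a multiple of $\cal{J}_\rh\om(u,v)$ and requires a further identity (or a direct coordinate check on $\rh_-$) to close. This is routine --- a short computation confirms that all three bracketed terms contribute the same multiple of $\cal{J}_\rh\om(u,v)\,\nu$ --- but should be written out rather than asserted. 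Once that collapse is established and the K\"ahler test case fixes $c>0$, the argument is complete.
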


Now, turn to the timelike case.  Recall that a para-complex structure on a real vector space is an endomorphism \(I\) such that \(I^2 = \Id\) and the \(\pm1\)-eigenspaces of \(I\) have equal dimension.  Since \slr\ preserves the para-complex structure \(I_0\) on \(\bb{R}^6\) defined by:
\e\label{I0}
I_0(e_i) = \begin{cases*} e_i & i = 1,2,3\\ -e_i & i = 4,5,6, \end{cases*}
\ee
the \slr\ 3-form \(\rh\) induces a para-complex structure \(I_\rh\) on \(\bb{B}\) and hence induces a map:
\e\label{calIrh}
\bcd[row sep = 0pt]
\cal{I}_\rh:\ww{2}\bb{B}^* \ar[r] & \ss{2}\bb{B}^*\\
\om \ar[r, maps to] & \Big\{(a,b) \mt \frac{1}{2}\lt[\om(I_\rh a,b) + \om(I_\rh b,a)\rt]\Big\}.
\ecd
\ee
The following was also proven by the author in \cite{RhPfCSF}:
\begin{Prop}[{\cite[Prop.\ 9.15]{RhPfCSF}}]\label{TRP}
Let \(\rh\) be an \slr\ 3-form on \(\bb{B}\).  Then, \(\th \w \om + \rh\) is a \sg\ 3-form on \(\T_x \M\) \iff\ the symmetric bilinear form \(\cal{I}_\rh \om\) on \(\bb{B}\) has signature \((3,3)\) and \(\om^3 < 0\).
\end{Prop}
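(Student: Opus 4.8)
The plan is to normalise $\rh$ to the model form $\rh_+$ and then to read off both conditions from Hitchin's quadratic invariant of a $3$-form. Since any two \slr\ $3$-forms on $\bb B$ differ by an element $g \in \GL_+(6;\bb{R})$ with $g^*\rh = \rh_+$, and since extending $g$ by the identity on $\bb B^\bot$ yields $\tld g \in \GL_+(7;\bb{R})$ with $\tld g^*(\th\w\om+\rh) = \th\w g^*\om + \rh_+$, the property of being a \sg\ $3$-form is preserved; moreover $I_\rh$ is natural under pullback, so $\cal I_{\rh_+}(g^*\om) = g^*(\cal I_\rh\om)$ and $(g^*\om)^3 = g^*(\om^3)$, whence, as $g \in \GL_+$, both the signature of $\cal I_\rh\om$ and the sign of $\om^3$ are unchanged. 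I may therefore assume $\rh = \rh_+ = \th^{123}+\th^{456}$ and $\th = \th^7$, so that $\bb B = \<e_1,\dots,e_6\?$; write $\ph = \th^7\w\om + \rh_+$.

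The main tool is the $\ww7(\bb{R}^7)^*$-valued symmetric form $B_\ph(u,v) = \tfrac16\,(u\hk\ph)\w(v\hk\ph)\w\ph$, which is defined for every $\ph \in \ww3(\bb{R}^7)^*$; trivialising $\ww7$ by the standard orientation $\mathrm{vol}_0 = \th^1\w\dots\w\th^7$ gives a scalar symmetric form $\h B_\ph$. I would recall the classification of the open $\GL_+(7;\bb{R})$-orbits in $\ww3(\bb{R}^7)^*$: $\ph$ lies in an open orbit exactly when $\h B_\ph$ is nondegenerate, \g-forms correspond to $\h B_\ph$ definite and \sg-forms to $\h B_\ph$ indefinite, and among the latter the orbit of $\svph_0$ is singled out by $\h B_\ph$ having signature $(3,4)$ — that of $\tld g_0$ — rather than $(4,3)$ (the two are exchanged by $\ph \mapsto -\ph$, under which $\h B_\ph \mapsto -\h B_\ph$, and which is realised by no orientation-preserving map). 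It thus suffices to compute $\h B_\ph$ and to show that it has signature $(3,4)$ if and only if $\cal I_{\rh_+}\om$ has signature $(3,3)$ and $\om^3 < 0$.

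I would then compute $\h B_\ph$ in the splitting $\T_x\M = \bb B \ds \<e_7\?$. For the normal entry, $e_7\hk\ph = \om$, so $B_\ph(e_7,e_7) = \tfrac16\,\om\w\om\w\ph = \tfrac16\,\th^7\w\om^3$ (the term $\om^2\w\rh_+$ vanishes since $\ww7\bb B^* = 0$); hence $\h B_\ph(e_7,e_7)$ is a positive multiple of $\om^3$ measured against the orientation $\th^{123456}$ of $\bb B$, and is negative exactly when $\om^3 < 0$. For the tangential block, $u\hk\ph = -\th^7\w(u\hk\om) + u\hk\rh_+$ for $u \in \bb B$, and on expanding $B_\ph(u,v)$ and retaining the terms carrying a single $\th^7$, the defining identities of the \slr-structure (equivalently, of the para-complex structure $I_{\rh_+}$) collapse the result to a fixed multiple of $(\cal I_{\rh_+}\om)(u,v)$. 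Since signature $(3,3)$ is insensitive to an overall sign, $\h B_\ph|_{\bb B}$ has signature $(3,3)$ if and only if $\cal I_{\rh_+}\om$ does; note in particular that $\cal I_{\rh_+}\om$, being of the form $\bigl(\begin{smallmatrix}0&A\\A^\top&0\end{smallmatrix}\bigr)$ in the decomposition $\bb B = \<e_1,e_2,e_3\? \ds \<e_4,e_5,e_6\?$, always has balanced signature $(r,r)$ with $r = \Rnk A$, so its signature is either $(3,3)$ or degenerate, never $(2,4)$.

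Assembling these gives both directions. If $\ph$ is a \sg\ $3$-form, then $\h B_\ph$ is nondegenerate of signature $(3,4)$; its restriction to the codimension-one subspace $\bb B$ cannot have the rank-$5$ degenerate signature forced by a null normal, since the $\cal I_{\rh_+}$-form has even rank, so $\h B_\ph|_{\bb B}$ is nondegenerate of signature $(3,3)$, whence $\cal I_{\rh_+}\om$ has signature $(3,3)$; the value $\om^3 < 0$ then follows by matching the induced orientation against the model $\svph_0$ (for which $\om^3 = -6\,\th^{123456}$). Conversely, if $\cal I_{\rh_+}\om$ has signature $(3,3)$ and $\om^3 < 0$, then $\h B_\ph|_{\bb B}$ is nondegenerate of signature $(3,3)$ and the sign bookkeeping forces the remaining direction to be negative, giving $\h B_\ph$ signature exactly $(3,4)$ rather than $(4,3)$, so $\ph$ lies in the \sg-orbit. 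The main obstacle is the tangential computation identifying $\h B_\ph|_{\bb B}$ with a multiple of $\cal I_{\rh_+}\om$, together with the delicate sign analysis needed to show that $\om^3 < 0$ — and not merely $\om^3 \neq 0$ or $\om^3 > 0$ — is the condition separating the orbit of $\svph_0$ from the orientation-reversed orbit of $-\svph_0$.
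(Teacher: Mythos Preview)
The paper does not prove this proposition: it is quoted verbatim from \cite[Prop.\ 9.15]{RhPfCSF} and no argument is given here, so there is no ``paper's own proof'' to compare against. I therefore assess your approach on its own merits.

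Your strategy --- normalise \(\rh\) to \(\rh_+\), then compute Bryant's \(\ww{7}\)-valued form \(B_\ph(u,v)=\tfrac16(u\hk\ph)\w(v\hk\ph)\w\ph\) and read off the signature --- is natural, and two of your computations are correct: one checks directly that \(\h B_\ph|_{\bb B\times\bb B}=\tfrac12\,\cal{I}_{\rh_+}\om\) and that \(\h B_\ph(e_7,e_7)=\tfrac16\,\om^3/\th^{123456}\). The observation that \(\cal{I}_{\rh_+}\om\) is block off-diagonal in \(E_+\oplus E_-\), hence has signature \((r,r)\), is also a nice touch.

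There is, however, a genuine gap. You treat \(\h B_\ph\) as if it were block-diagonal in the splitting \(\bb B\oplus\langle e_7\rangle\), but the cross terms \(\h B_\ph(u,e_7)\) for \(u\in\bb B\) are \emph{not} zero in general. Using the identity \(u\hk(\om^2\w\rh_+)=0\) on the 6-dimensional space \(\bb B\), one finds
\[
B_\ph(u,e_7)=\tfrac14\,\th^7\w(u\hk\rh_+)\w\om^2,
\]
and this picks up the \(\om_{++}\), \(\om_{--}\) components of \(\om\) (in the decomposition \(\ww{2}\bb B^*=\ww{2}E_+^*\oplus(E_+^*\otimes E_-^*)\oplus\ww{2}E_-^*\)). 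Concretely, for \(\om=\th^{14}+\th^{25}+\th^{36}+\th^{56}\) --- which satisfies both of your hypotheses, since \(\cal{I}_{\rh_+}\om\) has signature \((3,3)\) and \(\om^3=-6\,\th^{123456}\) --- one computes \(\h B_\ph(e_1,e_7)=\tfrac12\ne0\). Consequently the inference ``\(\h B_\ph|_{\bb B}\) has signature \((3,3)\) and \(\h B_\ph(e_7,e_7)<0\), therefore \(\h B_\ph\) has signature \((3,4)\)'' is not valid as stated: the signature of a symmetric form is not determined by its diagonal blocks alone. To repair the argument you must either carry out a Schur-complement computation relating \(\det\h B_\ph\) (and hence the sign on \(\bb B^{\perp_{\h B_\ph}}\)) to \(\om^3\) and the cross terms, or else give a connectedness argument on the set \(\{\om:\cal{I}_{\rh_+}\om\text{ nondegenerate},\ \om^3<0\}\) --- and the latter also requires care, since already \(\{\om_{+-}:\det\ne0\}\) has two components. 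Your closing paragraph acknowledges that the sign analysis is ``delicate'', but as written the crucial step is missing rather than merely delicate.
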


I end this subsection with one final remark.  Given an \slr\ 3-form \(\rh\) on \(\N\), write \(E_{\pm,\rh}\) for the \(\pm1\) eigenbundles of the para-complex structure \(I_\rh\) on \(\ts \N\) induced by \(\rh\).  Then, \(\rh|_{E_{\pm,\rh}} \ne 0\) for both \(\pm1\), and thus \(\rh\) defines a canonical orientation on \(E_{\pm,\rh}\); in particular, one may regard \(E_{\pm,\rh}\) as sections of the oriented Grassmannian bundle \(\oGr_3(\T\N)\).\\

\subsection{A vanishing result for natural cohomology classes}

The aim of this subsection is to prove the following result:
\begin{Lem}\label{closed-exhaust}
Suppose there exists an assignment to each \(n\)-manifold \(\M\) (with, possibly empty, boundary) of a degree \(p\) cohomology class \(\nu(\M) \in \sch{p}{\M;G}\), where \(G\) is either a field or a finite Abelian group, which is natural in the sense that for each embedding \(f:\M \emb \M'\) of \(n\)-manifolds with boundary:
\ew
\nu(\M) = f^*\nu(\M').
\eew
Then, if \(\nu\) vanishes on every closed (respectively closed, oriented) \(n\)-manifold, it vanishes on every (respectively every oriented) \(n\)-manifold with boundary.
\end{Lem}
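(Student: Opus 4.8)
The plan is to bootstrap from the closed case to the general case in two stages: first, replace ``closed'' by ``compact with boundary'' using a doubling construction, and second, pass from compact pieces to an arbitrary (possibly non-compact) manifold by exhausting it and invoking the continuity of cohomology. The only property of \(\nu\) used is its naturality under codimension-\(0\) embeddings of \(n\)-manifolds with boundary, and the hypothesis on \(G\) will be needed exactly once, to control a \(\IL^{1}\)-term.

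I would first reduce to the boundaryless case. The inclusion \(\itr{\M} \emb \M\) of the interior is such an embedding and is a homotopy equivalence, so it induces an isomorphism on \(\sch{p}{-;G}\) sending \(\nu(\M)\) to \(\nu(\itr{\M})\); hence \(\nu(\M) = 0\) if and only if \(\nu(\itr{\M}) = 0\), and I may assume \(\M\) is boundaryless. Next, for any \emph{compact} manifold-with-boundary \(K\) I would form the double \(\D K = K \cup_{\del K} \ol{K}\), a closed \(n\)-manifold into which \(K\) includes by a codimension-\(0\) embedding \(f : K \emb \D K\); when \(K\) is oriented, \(\D K\) is canonically oriented and \(f\) is orientation-preserving. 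Naturality gives \(\nu(K) = f^{*}\nu(\D K)\), and the hypothesis gives \(\nu(\D K) = 0\), so \(\nu(K) = 0\) for every compact manifold with boundary.

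It remains to treat a non-compact boundaryless \(\M\) (if \(\M\) is compact and boundaryless it is already closed). I would fix an exhaustion \(\M = \bigcup_{k} K_{k}\) by nested compact codimension-\(0\) submanifolds with smooth boundary, \(K_{k} \pc \itr{K_{k+1}}\). The previous step gives \(\nu(K_{k}) = 0\), while naturality applied to the inclusions \(\iota_{k} : K_{k} \emb \M\) gives \(\nu(K_{k}) = \iota_{k}^{*}\nu(\M)\); thus \(\nu(\M)\) lies in the kernel of the natural map \(\sch{p}{\M;G} \to \IL_{k}\,\sch{p}{K_{k};G}\). By the Milnor exact sequence this kernel is precisely \(\IL^{1}_{k}\,\sch{p-1}{K_{k};G}\), so the whole argument hinges on showing that this \(\IL^{1}\)-term vanishes. \textbf{This is the main obstacle, and the sole point at which the hypothesis on \(G\) is used:} each \(K_{k}\) is compact, hence of finite homotopy type, so \(\sch{p-1}{K_{k};G}\) is finite when \(G\) is a finite Abelian group and finite-dimensional over \(G\) when \(G\) is a field; in either case the inverse system \(\{\sch{p-1}{K_{k};G}\}_{k}\) satisfies the Mittag--Leffler condition, whence \(\IL^{1} = 0\). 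Therefore \(\nu(\M) = 0\). The oriented statement follows by the same steps, since every \(K_{k}\) and every double occurring is oriented and every embedding is orientation-preserving.
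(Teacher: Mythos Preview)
Your proof is correct and follows essentially the same approach as the paper: doubling to pass from closed to compact-with-boundary, then exhausting by compact pieces and using the Milnor \(\IL^{1}\) sequence together with Mittag--Leffler for finite groups or finite-dimensional vector spaces. The only organisational difference is that you reduce to the boundaryless case at the outset via the interior inclusion \(\itr{\M}\emb\M\), whereas the paper instead treats the non-compact-with-boundary case last by doubling again; both are fine, and your route is arguably cleaner.
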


Examples of such classes \(\nu\) are any cohomology class which is constructed only from Stiefel--Whitney classes, or only from the reduction of the Chern, Pontrjagin and Euler classes to real coefficients.  More generally, for any cohomology operation \(\Th:\sch{p}{-;G} \to \sch{q}{-;G'}\) (see \cite[p.\ 448]{AT}), if \(\nu(\M) \in \sch{p}{\M;G}\) is natural, then \(\Th \circ \nu(\M) \in \sch{q}{\M;G'}\) is also natural.  I remark also that, whilst only the case \(G = \rqt{\bb{Z}}{2\bb{Z}}\) will be used in this paper, more general \(G\) is allowed in \lref{closed-exhaust}, since the proof for all such \(G\) is essentially the same.

\begin{proof}[Proof of \lref{closed-exhaust}]
By assumption \(\nu(\M) = 0\) for all closed (respectively closed, oriented) \(n\)-manifolds \(\M\).  The proof proceeds by considering three cases of increasing generality.\\

{\bf\boldmath Case 1: \(\M\) is compact with boundary.}  Consider the double \(\cal{D}\M = \M \cup_{\del\M} \ol{\M}\) formed by gluing \(\M\) to a second copy of itself \(\ol{\M}\) (now with the opposite orientation, if appropriate) along the boundary \(\del\M\). Then, \(\cal{D}\M\) is a closed (respectively closed, oriented) \(n\)-manifold and thus \(\nu(\cal{D}\M) = 0\), by assumption. Writing \(\io:\M\emb\cal{D}\M\) for the natural inclusion, the naturality of \(\nu\) implies that:
\ew
\nu(\M) = \io^*\nu(\cal{D}\M) = 0,
\eew
as claimed.\\

{\bf\boldmath Case 2: \(\M\) is non-compact and without boundary.}  Let \(f:\M \to \bb{R}\) be a proper Morse function (see, e.g.\ \cite[Cor.\ 6.7]{MT}) and choose increasing unbounded sequences \(i_k\in\bb{R}_{>0}\) and \(j_k \in \bb{R}_{>0}\) such that both \(i_k\) and \(-j_k\) are regular values of \(f\) for all \(k\in\bb{N}\). Then, for each \(k\) the subset \(f^{-1}[-j_k,i_k] = \M_k\) is a compact submanifold with boundary of \(\M\) (see \cite[Lem., p.\ 62]{DT} for a similar result).  Moreover, each \(\M_{k+1}\) is obtained from \(\M_k\) by attaching a finite number of \(m\)-cells, for suitable choices of \(m\) and thus the function \(f\) gives \(\M\) the structure of a CW complex such that each \(\M_k\) is a subcomplex of \(\M\).  Define:
\ew
\IL \sch{p}{\M_k; G} = \lt\{(m_k)_k \in \prod_{i=0}^\infty \sch{p}{\M_k; G} ~\m|~\text{for all }k\ge0: m_{k+1}|_{\M_k} = m_k\rt\}.
\eew
Suppose initially that \(G = \bb{Q}\), or \(G = \rqt{\bb{Z}}{q\bb{Z}}\) for some prime \(q\).  Then, by \cite[Prop.\ 3F.5]{AT}, the natural map:
\e\label{IL-iso}
\sch{p}{\M; G} &\to \IL \sch{p}{\M_k; G}\\
m &\mt \lt(m|_{\M_k}\rt)_k
\ee
is an isomorphism.  Thus, \(\nu(\M) = 0\) \iff\ \(\nu(\M)|_{\M_k} = 0\) for each \(k\).  However, by naturality \(\nu(\M)|_{\M_k} = \nu(\M_k)\), which vanishes by Case 1, yielding \(\nu(\M) = 0\), as required.

For more general \(G\) (see \cite[Lem.\ 2]{OAHT}), \eref{IL-iso} is replaced by the short exact sequence:
\ew
0 \to \IL{}^1 \sch{p-1}{\M_k; G} \to \sch{p}{\M; G} &\to \IL \sch{p}{\M_k; G} \to 0,
\eew
where \(\IL{}^1\) is the first right-derived functor of \(\IL\) (see \cite{OAHT} for a more explicit definition).  Thus, to prove the lemma when \(G\) is an arbitrary field or finite Abelian group, it suffices to prove that \(\IL{}^1\sch{p-1}{\M_k; G} = 0\) in this case.  However this is clear: since each \(\M_k\) is a finite cell complex, the spaces \(\sch{p-1}{\M_k; G}\) are finite-dimensional \(G\) vector spaces if \(G\) is a field, and are finite Abelian groups if \(G\) is a finite Abelian group.  The result now follows by \cite[Exercise 3.5.2]{AItHA}.\\

{\bf\boldmath Case 3: \(\M\) is non-compact with boundary.}  By considering the double \(\mc{D}\M\) of \(\M\) and using case 2, it follows that \(\nu(\M) = 0\).

\end{proof}

\section{Existence of \sg-structures}

The aim of this section is to prove \tref{SG2-Exist}.  Recall the following definition, taken from \cite{CG}:

\begin{Defn}
Recall that a 3-form \(\ph\) on \(\bb{R}^7\) is called a \g\ 3-form if there is an orientation-preserving endomorphism of \(\bb{R}^7\) identifying \(\ph\) with the 3-form:
\ew
\vph_0 = \th^{123} + \th^{145} + \th^{167} + \th^{246} - \th^{257} - \th^{347} - \th^{356} \in\ww{3}\lt(\bb{R}^7\rt)^*.
\eew
The stabiliser of \(\vph_0\) in \(\GL_+(7;\bb{R})\) is \(\Gg_2 \pc \SO(7)\) which preserves the standard Euclidean inner product on \(\bb{R}^7\) and hence \(\ph\) induces an inner product on \(\bb{R}^n\), denoted \(g_\ph\).  An oriented 3-plane \(C \in \oGr_3(\bb{R}^7)\) is then called calibrated \wrt\ \(\ph\) if, writing \(vol_C\) for the volume form on \(C\) induced by the metric \(g_\ph|_C\) and the orientation on \(C\), one has:
\ew
\ph|_C = vol_C.
\eew
\end{Defn}

Analogously, let \(\sph\) be a \sg\ 3-form on \(\bb{R}^7\).  I term an oriented 3-plane \(C \in \oGr_3(\bb{R}^7)\) positively calibrated if \(g_{\sph}\) is positive definite on \(C\) and, writing \(vol_C\) for the volume form on \(C\) induced by the metric \(g_{\sph}|_C\) and the orientation on \(C\), one has:
\ew
\sph|_C = vol_C.
\eew
It is well-known that \g\ acts transitively on the set of calibrated 3-planes and that the stabiliser of any calibrated 3-plane is isomorphic to \(\SO(4)\) (see \cite[\S10.8]{CMwSH}).  Similarly:
\begin{Prop}
\sg\ acts transitively on the set of positively calibrated 3-planes and the stabiliser of any positively calibrated 3-plane is a maximal compact subgroup of \sg, isomorphic to \(\SO(4)\).
\end{Prop}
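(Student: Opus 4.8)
The plan is to mirror the proof for the compact group \g\ recalled in \cite[\S10.8]{CMwSH}, keeping careful track of the split signature at every step. First I would fix the explicit model plane \(C_0 = \<e_1,e_2,e_3\?\). Restricting the defining formula for \(\svph_0\) (that is, setting \(\th^4 = \dots = \th^7 = 0\)) leaves only \(\svph_0|_{C_0} = \th^{123}\), while the induced metric \(\tld{g}_0|_{C_0} = (\th^1)^{\ts2} + (\th^2)^{\ts2} + (\th^3)^{\ts2}\) is positive definite with volume form \(\th^{123}\); hence \(C_0\) is positively calibrated, and in particular the set of positively calibrated 3-planes is nonempty.

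Next I would determine the stabiliser of \(C_0\) in \sg. Since every element of \sg\ preserves both \(\svph_0\) and \(\tld{g}_0\), any \(g\) fixing \(C_0\) setwise preserves the positive-definite metric and the induced orientation (\(= \th^{123}\)) on \(C_0\) and, as \sg\ is a subgroup of \(\GL_+(7;\bb{R})\), also the negative-definite metric and induced orientation on the complementary 4-plane \(C_0^\bot = \<e_4,\dots,e_7\?\). This yields an injection \(\Stab(C_0) \emb \SO(3) \x \SO(4)\) into the product of the isometry groups of \(C_0\) and \(C_0^\bot\). The decisive structural input is the split cross product \(u \x v\), determined by \(\tld{g}_0(u \x v, w) = \svph_0(u,v,w)\): it restricts to the standard cross product on \(C_0\) and, via the underlying (split) octonion multiplication, realises \(C_0^\bot\) as a rank-one module over the definite quaternion algebra \(\bb{H} = \bb{R}1 \ds C_0\). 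Tracking the compatibility of a stabilising element with this multiplication shows that the image of \(\Stab(C_0)\) is precisely the graph \(\{(\rho(h),h) \mid h \in \SO(4)\}\) of a surjective homomorphism \(\rho:\SO(4) \to \SO(3)\); concretely, under \(C_0^\bot \cong \bb{H}\) and \(C_0 \cong \Im\bb{H}\), a class \([(q_+,q_-)] \in \SO(4) \cong (\Sp(1) \x \Sp(1))/\{\pm1\}\) acts on \(C_0^\bot\) by \(x \mt q_+ x\,\ol{q}_-\) and on \(C_0\) by conjugation through a single \(\Sp(1)\)-factor. Hence \(\Stab(C_0) \cong \SO(4)\). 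As \sg\ is the split real form, its maximal compact subgroup is \(\SO(4)\), of dimension 6; a compact subgroup abstractly isomorphic to \(\SO(4)\) is therefore forced to be a maximal compact subgroup, giving the second assertion for \(C_0\).

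For transitivity I would pass to the split-octonion model, in which \sg\ is realised as the automorphism group \(\operatorname{Aut}(\tld{\bb{O}})\) of the split octonions. Under this identification, positively calibrated 3-planes correspond exactly to the imaginary parts \(\Im\bb{H}\) of definite (non-split) quaternion subalgebras \(\bb{H} \pc \tld{\bb{O}}\), and the claim reduces to the standard fact that \(\operatorname{Aut}(\tld{\bb{O}})\) acts transitively on such subalgebras. Alternatively, staying within linear algebra, the orbit \(\widetilde{\mathrm{G}}_2 \1 C_0\) is open, since its dimension \(14 - 6 = 8\) equals that of the manifold \(\mc{P}\) of positively calibrated planes; a connectedness argument for \(\mc{P}\) then upgrades openness to transitivity, as distinct \sg-orbits would be disjoint open subsets of \(\mc{P}\). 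Once transitivity is established, every positively calibrated 3-plane has stabiliser conjugate to \(\Stab(C_0)\), hence isomorphic to \(\SO(4)\) and maximal compact, completing the proof.

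The step I expect to be the main obstacle, exactly as in the compact case, is pinning the stabiliser down precisely: one must show that preservation of the cross product cuts \(\SO(3) \x \SO(4)\) down to a single graph-embedded copy of \(\SO(4)\), rather than merely bounding \(\Stab(C_0)\) inside the product. Moreover, because \sg\ is noncompact, transitivity cannot be deduced from compactness of the group, so the genuine content lies in the connectedness of \(\mc{P}\) (equivalently, in the transitivity of \(\operatorname{Aut}(\tld{\bb{O}})\) on definite quaternion subalgebras), for which there is no purely dimension-theoretic shortcut.
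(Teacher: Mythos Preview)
Your approach is correct but differs from the paper's in both parts.

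For transitivity, the paper does not use octonion automorphisms or a dimension/connectedness argument. Instead it quotes Kath \cite[Prop.\ 2.3]{SG2SoPRM}: \sg\ acts transitively on ordered pairs of orthonormal spacelike vectors. Given a positively calibrated \(C\), pick an oriented orthonormal basis \((c_1,c_2,c_3)\); after acting by \sg\ one may assume \(c_1=e_1\), \(c_2=e_2\). The calibration condition \(\svph_0(c_1,c_2,c_3)=1\) then forces \(c_3=e_3+u\) with \(u\in\<e_4,\dots,e_7\?\), and \(\tld{g}_0(c_3,c_3)=1\) gives \(\tld{g}_0(u,u)=0\), hence \(u=0\) since \(\tld{g}_0\) is negative definite on \(\<e_4,\dots,e_7\?\). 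This is more elementary than either of your routes and sidesteps entirely the connectedness of \(\mc{P}\) that you (rightly) flag as the nontrivial step in your second alternative.

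For the stabiliser, the paper simply cites Herz \cite[Prop.\ 4.4]{A3F&ESLGoTG2} rather than working through the quaternionic module structure you outline. Your sketch is the standard way to prove that cited result, so the content is the same; the paper just outsources it. What your approach buys is a self-contained identification of \(\Stab(C_0)\) and an explicit description of how the two \(\Sp(1)\)-factors act, which the paper does not provide. What the paper's approach buys is brevity and the avoidance of any appeal to connectedness of the orbit space.
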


\begin{proof}
To prove transitivity of the action, consider the standard \sg\ 3-form \(\svph_0\) on \(\bb{R}^7\) and let \(C\) be positively calibrated \wrt\ \(\svph_0\).  Pick an oriented orthonormal basis \((c_1,c_2,c_3)\) of \(C\) \wrt\ \(\tld{g}_0|_C\) (which exists since \(\tld{g}_0\) is positive definite on \(C\)).  By \cite[Prop.\ 2.3]{SG2SoPRM}, \sg\ acts transitively on ordered pairs of orthonormal, spacelike vectors in \(\bb{R}^7\), so \wlg\ \(c_i = e_i\) (\(i=1,2\)).  Since \(\svph_0|_C = vol_C\) and \((c_1,c_2,c_3)\) is an oriented orthonormal basis of \(C\), one has:
\ew
\svph_0(c_1,c_2,c_3) = 1.
\eew
It follows that \(c_3 = e_3 + u\) for some \(u \in \<e_4,...,e_7\?\).  Since \(\tld{g}_0(c_3,c_3) = 1\), \(u\) satisfies \(\tld{g}_0(u,u) = 0\) and hence \(u = 0\), since \(\tld{g}_0\) is negative definite on \(\<e_4,...,e_7\?\).  Thus, \(C = \<e_1,e_2,e_3\?\) up to the action of \sg\ and hence \sg\ acts transitively on positively calibrated 3-planes.  The statement regarding stabilisers is proven in \cite[Prop.\ 4.4]{A3F&ESLGoTG2}.

\end{proof}

(Positively) calibrated 3-planes have the following desirable property:
\begin{Lem}\label{Cal-PCal}
(1) Let \(\ph\) be a \g\ 3-form on \(\bb{R}^7\) and let \(C\) be a calibrated 3-plane. Then:
\ew
\ph_C = 2\ph|_C - \ph
\eew
defines a \sg\ 3-form on \(\bb{R}^7\) and \(C\) is positively calibrated \wrt\ \(\ph_C\) (here \(\ph|_C\) is interpreted as a 3-form on \(\bb{R}^7\) using the splitting \(\bb{R}^7 = C \ds C^\bot\), where the orthocomplement is taken \wrt\ \(g_\ph\)).\\

(2) Let \(\sph\) be a \sg\ 3-form on \(\bb{R}^7\) and let \(C\) be a positively calibrated 3-plane. Then:
\ew
\sph_C = 2\sph|_C - \sph
\eew
defines a \g\ 3-form on \(\bb{R}^7\) and \(C\) is calibrated \wrt\ \(\sph_C\) (again \(\sph|_C\) is interpreted as a 3-form on \(\bb{R}^7\) using the splitting \(\bb{R}^7 = C \ds C^\bot\), where the orthocomplement is taken \wrt\ \(g_{\sph}\)).
\end{Lem}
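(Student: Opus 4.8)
The plan is to reduce each statement to a single explicit model computation by exploiting the fact that the construction $\ph \mt \ph_C := 2\ph|_C - \ph$ is equivariant under $\GL_+(7;\bb{R})$. First I would record this equivariance. The induced metric $g_\ph$ is built $\GL_+(7;\bb{R})$-equivariantly from $\ph$, so for any $A \in \GL_+(7;\bb{R})$ one has $g_{A^*\ph} = A^* g_\ph$; consequently the orthogonal splitting $\bb{R}^7 = C \ds C^\bot$, the restriction $\ph|_C$ (the component of $\ph$ in $\ww{3}C^*$), and the volume form $vol_C$ are all natural, giving $(A^*\ph)_{A^{-1}C} = A^*(\ph_C)$. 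Since being a \g\ (resp.\ \sg) 3-form, having induced metric of the relevant signature, and the calibration identity $\ph_C|_C = vol_C$ are all preserved by $A \in \GL_+(7;\bb{R})$, it suffices to verify each statement for one representative pair $(\ph,C)$ in each orbit. The same applies verbatim with $\ph$ replaced by $\sph$ in part (2).

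For part (1), I would standardise as follows. Since the \g\ 3-forms form a single $\GL_+(7;\bb{R})$-orbit, there is $A \in \GL_+(7;\bb{R})$ with $A^*\ph = \vph_0$; this carries $C$ to a calibrated 3-plane for $\vph_0$, which, since \g\ acts transitively on calibrated 3-planes, may be moved to $C_0 = \<e_1,e_2,e_3\?$ by a further element of \g\ (which fixes $\vph_0$). Thus, \wlg\ $\ph = \vph_0$ and $C = C_0$, with $\vph_0|_{C_0} = \th^{123} = vol_{C_0}$. A direct computation then gives
\ew
(\vph_0)_{C_0} = 2\th^{123} - \vph_0 = \th^{123} - \th^{145} - \th^{167} - \th^{246} + \th^{257} + \th^{347} + \th^{356},
\eew
i.e. the component of $\vph_0$ in $\ww{3}C_0^*$ is retained and every term with a leg in $C_0^\bot = \<e_4,\dots,e_7\?$ has its sign reversed. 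The orientation-preserving map $\diag(1,-1,-1,1,1,1,1)$ carries this form to $\svph_0$, so $(\vph_0)_{C_0}$ is a \sg\ 3-form; the same map fixes $\tld{g}_0$, so the induced metric is $\tld{g}_0$, which is positive definite on $C_0$ and satisfies $(\vph_0)_{C_0}|_{C_0} = \th^{123} = vol_{C_0}$. Hence $C_0$ is positively calibrated, proving (1).

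For part (2), I would argue symmetrically: the \sg\ 3-forms form a single $\GL_+(7;\bb{R})$-orbit and, by the preceding proposition, \sg\ acts transitively on positively calibrated 3-planes, so \wlg\ $\sph = \svph_0$ and $C = C_0$, with $\svph_0|_{C_0} = \th^{123} = vol_{C_0}$. The analogous computation gives
\ew
(\svph_0)_{C_0} = 2\th^{123} - \svph_0 = \th^{123} + \th^{145} + \th^{167} - \th^{246} + \th^{257} + \th^{347} + \th^{356},
\eew
which the same map $\diag(1,-1,-1,1,1,1,1)$ carries to $\vph_0$, exhibiting $(\svph_0)_{C_0}$ as a \g\ 3-form with $C_0$ calibrated. (As a consistency check, $\ph \mt \ph_C$ is an involution: a calibrated, resp.\ positively calibrated, plane has the same orthocomplement for both $g_\ph$ and $g_{\ph_C}$, since these metrics are block-diagonal for the splitting and agree on $C$, so (1) and (2) are mutually inverse.)

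The main obstacle is the reduction step rather than the final sign bookkeeping: one must check carefully that the metric-dependent restriction $\ph \mt \ph|_C$ is genuinely natural under $\GL_+(7;\bb{R})$ (so that standardising $\ph$ and $C$ simultaneously is legitimate), and that, in the model, the induced metric $g_{(\vph_0)_{C_0}}$ is indeed positive definite on $C_0$, which is precisely what makes ``positively calibrated'' meaningful. Once these structural points are secured, the explicit computations above are routine.
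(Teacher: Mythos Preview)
Your proposal is correct and follows essentially the same approach as the paper: reduce via transitivity (of $\GL_+(7;\bb{R})$ on \g/\sg\ 3-forms and of \g/\sg\ on (positively) calibrated 3-planes) to the standard pair $(\vph_0,\<e_1,e_2,e_3\?)$ resp.\ $(\svph_0,\<e_1,e_2,e_3\?)$, then verify by direct computation. The only cosmetic difference is that the paper uses the orientation-preserving map $\Id \ds (-\Id)$ on $\<e_1\? \ds \<e_2,\dots,e_7\?$ rather than your $\diag(1,-1,-1,1,1,1,1)$; both send $(\vph_0)_{C_0}$ to $\svph_0$, and your extra remarks on equivariance and the involution property are sound but not needed for the argument.
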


\begin{proof}
The proof is by direct calculation.  For (1), since \g\ acts transitively on the set of calibrated 3-planes, \wlg\ one may assume that \(\ph = \vph_0\) and \(C = \<e_1,e_2,e_3\?\).  Then:
\ew
\ph_C = 2\th^{123} - \lt(\th^{123} + \th^{145} + \th^{167} + \th^{246} - \th^{257} - \th^{347} - \th^{356}\rt) = \th^{123} - \th^{145} - \th^{167} - \th^{246} + \th^{257} + \th^{347} + \th^{356}
\eew
which can be seen to be of \sg-type by considering the orientation-preserving automorphism \(\Id \ds -\Id\) on \(\<e_1\? \ds \<e_2,...,e_7\? \cong \bb{R}^7\), and which has \(C\) as a positively calibrated 3-plane.  The converse is similar.

\end{proof}

Since \(\SO(4) \pc \tld{\Gg}_2\) is a maximal compact subgroup, the quotient \(\rqt{\tld{\Gg}_2}{\SO(4)}\) is contractible.  Thus, given any oriented 7-manifold \(\M\) equipped with a \sg\ 3-form \(\sph\), there exists a rank 3 distribution \(C\) on \(\M\) which is pointwise positively calibrated \wrt\ \(\sph\).  The corresponding result in the \g\ case is non-trivial, since \(\rqt{\Gg_2}{\SO(4)}\) is not contractible.
\begin{Prop}\label{Cal-Exist}
Let \(\M\) be an oriented 7-manifold and let \(\ph\) be a \g\ 3-form on \(\M\).  Then, \(\M\) admits a rank 3 distribution \(C\) which is pointwise calibrated \wrt\ \(\ph\).
\end{Prop}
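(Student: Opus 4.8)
The plan is to recast the existence of a pointwise calibrated rank 3 distribution as a bundle-reduction problem and then run obstruction theory, disposing of the surviving obstructions by means of \lref{closed-exhaust}. Since \(\Gg_2\) acts transitively on calibrated 3-planes with stabiliser \(\SO(4)\), the set of calibrated 3-planes in \(\bb{R}^7\) is the homogeneous space \(\Gg_2/\SO(4)\), and a pointwise calibrated distribution is exactly a section of the associated bundle \(\mc{C} = P_\ph \x_{\Gg_2}(\Gg_2/\SO(4)) \to \M\), where \(P_\ph\) is the \(\Gg_2\)-subbundle of the frame bundle determined by \(\ph\); equivalently, it is a reduction of \(P_\ph\) to \(\SO(4)\), i.e.\ a lift of the classifying map \(\M \to \B\Gg_2\) of \(P_\ph\) along the fibration \(\Gg_2/\SO(4) \to \B\SO(4) \to \B\Gg_2\). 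Unlike the split case, the fibre \(\Gg_2/\SO(4)\) is not contractible, so genuine obstruction theory is required. I first record that \(\M\) is spin: a \g 3-form reduces the structure group of \(\T\M\) to \(\Gg_2 \pc \SO(7)\), and since \(\Gg_2\) is simply connected this reduction lifts to \(\Spin(7)\), whence \(w_1(\M) = w_2(\M) = 0\).

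Next I would compute the low homotopy groups of the fibre from the long exact sequence of \(\SO(4) \to \Gg_2 \to \Gg_2/\SO(4)\), using \(\pi_*(\Gg_2) = (0,0,\bb{Z},0,0,\bb{Z}/3)\) in degrees \(1,\dots,6\) and \(\pi_*(\SO(4)) = \pi_*(S^3)^{\oplus 2}\) in degrees \(\ge 2\). This gives \(\pi_1 = 0\), \(\pi_2 = \rqt{\bb{Z}}{2\bb{Z}}\), \(\pi_3 = 0\), \(\pi_4 = \bb{Z}\), \(\pi_5 = (\rqt{\bb{Z}}{2\bb{Z}})^2\) and \(\pi_6 = (\rqt{\bb{Z}}{2\bb{Z}})^2\) (the map \(\pi_3(\SO(4)) \to \pi_3(\Gg_2)\) being onto because the two \(S^3\)-factors include into \(\Gg_2\) with coprime Dynkin indices \(1\) and \(3\), and a similar analysis showing \(\pi_6(\SO(4)) \to \pi_6(\Gg_2)\) is onto). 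Thus the obstruction to extending a section over the \(i\)-skeleton of the \(7\)-manifold \(\M\) lies in \(\sch{i+1}{\M;\pi_i(\Gg_2/\SO(4))}\) for \(i = 2,4,5,6\), and each is the pullback of the corresponding universal obstruction in \(\sch{i+1}{\B\Gg_2;\pi_i(\Gg_2/\SO(4))}\). Since \(\B\Gg_2\) is \(3\)-connected one has \(\sch{3}{\B\Gg_2;\rqt{\bb{Z}}{2\bb{Z}}} = 0\), and a short computation should give \(\sch{5}{\B\Gg_2;\bb{Z}} = 0\); hence the universal obstructions in degrees \(3\) and \(5\) vanish, the universal section extends over the \(5\)-skeleton of \(\B\Gg_2\), and pulling back leaves only \(o_6 \in \sch{6}{\M;(\rqt{\bb{Z}}{2\bb{Z}})^2}\) and \(o_7 \in \sch{7}{\M;(\rqt{\bb{Z}}{2\bb{Z}})^2}\).

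It remains to kill \(o_6\) and \(o_7\). These are now well-defined natural \(\rqt{\bb{Z}}{2\bb{Z}}\)-classes, being pulled back from \(\sch{*}{\B\Gg_2;\rqt{\bb{Z}}{2\bb{Z}}} = \rqt{\bb{Z}}{2\bb{Z}}[w_4,w_6,w_7]\); in degrees \(6\) and \(7\) this forces \(o_6\) to be a multiple of \(w_6(\M)\) and \(o_7\) a multiple of \(w_7(\M)\), and both are natural under codimension-\(0\) embeddings. By \lref{closed-exhaust} (applied within the class of spin \(7\)-manifolds, which is preserved by the doubling and Morse-theoretic exhaustion used in its proof) it therefore suffices to show \(o_6 = o_7 = 0\) on every closed spin \(7\)-manifold. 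But on such a manifold Wu's formula forces all Stiefel--Whitney classes to vanish: the Wu classes \(v_i\) vanish for \(i > 3\) by dimension, while \(v_1 = w_1 = 0\), \(v_2 = w_1^2 + w_2 = 0\) and \(v_3 = w_1 w_2 = 0\) by the spin condition, so \(v = 1\) and \(w(\M) = \mathrm{Sq}(v) = 1\); in particular \(w_6(\M) = w_7(\M) = 0\). Hence \(o_6\) and \(o_7\) vanish on closed spin manifolds and so, by \lref{closed-exhaust}, on \(\M\). With all obstructions vanishing, the section, and hence the calibrated distribution, exists.

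The step I expect to be the main obstacle is the \emph{integral} obstruction in \(\sch{5}{\M;\bb{Z}}\) coming from \(\pi_4(\Gg_2/\SO(4)) = \bb{Z}\): unlike the torsion obstructions it cannot be discarded by a dimension count, and over a general closed \(7\)-manifold \(\sch{5}{\M;\bb{Z}}\) is typically nonzero, so \lref{closed-exhaust} with \(\rqt{\bb{Z}}{2\bb{Z}}\)-coefficients does not touch it. Circumventing it rests on the two cohomological inputs \(\sch{3}{\B\Gg_2;\rqt{\bb{Z}}{2\bb{Z}}} = 0\) and \(\sch{5}{\B\Gg_2;\bb{Z}} = 0\), which simultaneously make the lower obstructions \emph{universally} trivial and make the surviving ones genuine pullbacks of well-defined universal classes rather than choice-dependent quantities, so that \lref{closed-exhaust} becomes applicable. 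Establishing these vanishing statements, together with the precise determination of \(\pi_6(\Gg_2/\SO(4))\), is where the real work lies. I note finally that the calibration geometry of \lref{Cal-PCal} must play no role here: \pref{Cal-Exist} is precisely the input used to promote a \g 3-form to a \sg 3-form, so the argument cannot appeal to the existence of \sg-structures on spin \(7\)-manifolds without circularity.
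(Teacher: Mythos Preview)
Your obstruction-theoretic route is genuinely different from the paper's, and largely sound through the degree-\(5\) stage, but there is a real gap at the top obstruction. Once you have extended the \emph{universal} section over the \(5\)-skeleton of \(\B\Gg_2\), the universal \(o_6\) is indeed a well-defined element of \(\sch{6}{\B\Gg_2;(\rqt{\bb{Z}}{2\bb{Z}})^2}\), hence a multiple of \(w_6\), and so pulls back to zero on any spin \(7\)-manifold. But your assertion that \(o_7\) is ``pulled back from \(\sch{*}{\B\Gg_2;\rqt{\bb{Z}}{2\bb{Z}}}\)'' presupposes that the universal section extends over the \(6\)-skeleton of \(\B\Gg_2\), i.e.\ that the \emph{universal} \(o_6\) vanishes in \(\sch{6}{\B\Gg_2;(\rqt{\bb{Z}}{2\bb{Z}})^2}\). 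You have not established this (and it is not automatic: \(w_6 \ne 0\) in \(\sch{*}{\B\Gg_2;\rqt{\bb{Z}}{2\bb{Z}}}\)). If the universal \(o_6\) is nonzero, the extension over the \(6\)-skeleton of \(\M\) exists but is a non-universal choice, and the resulting \(o_7\) is no longer natural in the sense required by \lref{closed-exhaust}; your Wu/\(w_7\) argument then does not apply. A secondary concern is that the universal section over the \(5\)-skeleton is itself not unique: choices differ by \(\sch{4}{\B\Gg_2;\pi_4} \cong \bb{Z}\), so even \(o_6\) is only well defined after a choice, and you should check it does not matter.

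For comparison, the paper sidesteps all of this with a single geometric observation: the \g\ cross product \(u_1 \x u_2\) satisfies \(\ph(u_1,u_2,u_1 \x u_2) = \|u_1 \w u_2\|^2\), so any pair of pointwise independent vector fields \((u_1,u_2)\) yields the calibrated distribution \(\<u_1,u_2,u_1 \x u_2\?\). This reduces the problem to the existence of a tangent \(2\)-field, whose sole obstruction on a \(7\)-manifold is \(w_6\); Massey's vanishing \(w_6 = 0\) on closed oriented \(7\)-manifolds then feeds into \lref{closed-exhaust}. Your approach would, if completed, give a structure-group-theoretic proof independent of the cross-product trick; the paper's buys simplicity at the cost of a \(\Gg_2\)-specific identity.
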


\begin{proof}
The proof is a generalisation of Friedrich--Kath--Moroianu--Semmelmann's proof of the existence of \(\SU(2)\)-structures on closed 7-manifolds with \g-structures (see \cite[Thm.\ 3.2]{ONPG2S}).  Define a cross-product \(\x\) on \(\M\) by the equation:
\ew
g(u_1 \x u_2, u_3) = \ph(u_1,u_2,u_3)
\eew
for all \(p \in \M\) and \(u_i \in \T_p\M\), (\(i=1,2,3\)).  Direct calculation shows that if \(u_1\) and \(u_2\) are linearly independent, then \(\text{Span}\<u_1,u_2,u_1\x u_2\?\), together with its natural orientation induced by the above ordering of basis vectors, defines a calibrated 3-plane in \(\T_p\M\).  If \(\M\) is closed, \(\M\) admits a pair of everywhere linearly independent vector fields by \cite[Table 2]{VFoM}.

Therefore, to prove \pref{Cal-Exist} for open manifolds (i.e.\ for non-compact manifolds and manifolds with boundary) it suffices to prove that every open orientable 7-manifold \(\M\) also admits two everywhere linearly independent vector fields.  By \cite[Thm.\ 12.1]{CC} (see also the preceding discussion {\it op.\ cit.}), the condition \(w_6(\M)=0\) is necessary and sufficient to ensure the existence of two vector fields \(X\) and \(Y\) defined over the 6-skeleton of \(\M\) which are everywhere linearly independent. Moreover, since \(\M\) is open, \(\M\) deformation retracts onto a subcomplex of its 6-skeleton (cf.\ \cite[Prop.\ 4.3.1]{ItthP}) and thus \(\M\) itself admits two globally defined vector fields \(X\) and \(Y\) \iff\ \(w_6(\M) = 0\).  By \cite[Thm.\ III]{OtSWCoaM}, \(w_6\) vanishes on every closed oriented 7-manifold.  Thus, by \lref{closed-exhaust} it follows that \(w_6\) vanishes on every oriented 7-manifold, completing the proof.

\end{proof}

Using \pref{Cal-Exist}, I now prove \tref{SG2-Exist}:

\begin{proof}[Proof of \tref{SG2-Exist}]
By a well-known result of Gray (\cite[Remark 3]{RoG2S}; cf.\ \cite{VCPoM}), an oriented 7-manifold \(\M\) admits a \g-structure \(\ph\) \iff\ it is spin.  By \pref{Cal-Exist}, \(\M\) admits a pair \((\ph,C)\) of a \g\ 3-form \(\ph\) together with a rank 3 distribution \(C\), pointwise calibrated \wrt\ \(\ph\), \iff\ \(\M\) admits a \g-structure.  By \lref{Cal-PCal}, \(\M\) admits a pair \(\lt(\sph,C\rt)\) with \(\sph\) a \sg\ 3-form and \(C\) a rank 3 distribution pointwise positively calibrated \wrt\ \(\sph\), \iff\ \(\M\) admits a pair \((\ph,C)\) with \(\ph\) a \g\ 3-form and \(C\) a rank 3 distribution pointwise calibrated \wrt\ \(\ph\).  Finally -- as discussed above -- since \(\SO(4) \pc \tld{G}_2\) is maximal compact, the quotient space \(\rqt{\tld{G}_2}{\SO(4)}\) is contractible and thus a manifold \(\M\) admits a \sg\ 3-form \(\sph\) \iff\ it admits a pair \(\lt(\sph,C\rt)\) with \(\sph\) a \sg\ 3-form and \(C\) a rank 3 distribution pointwise positively calibrated \wrt\ \(\sph\). The result follows by combining these four logical equivalences.

\end{proof}
~

\section{\sg-Cobordisms}

The aim of this section is to prove \tref{htpy->cob}.  For brevity of notation, the term `\(\SL\) form' shall be used to denote either an \(\SL(3;\bb{C})\) 3-form or an \(\SL(3;\bb{R})^2\) 3-form, as appropriate.

\begin{Defn}\label{ext-def}
Let \(\N\) be an oriented 6-manifold.  Let \(\ww{\bin{2}{3}}\T^*\N\) denote the pullback of the bundle \(\ww{2}\T^*\N \to \N\) along the bundle \(\ww{3}\T^*\N \to \N\) and write \(\fr{p}:\ww{\bin{2}{3}}\T^*\N \to \ww{2}\T^*\N\) for the natural projection:
\ew
\bcd
\ww{\bin{2}{3}}\T^*\N \ar[d] \ar[r, "\fr{p}"] & \ww{2}\T^*\N \ar[d]\\
\ww{3}\T^*\N \ar[r] & \N
\ecd
\eew
Write \(\ww[+]{3}\T^*\N\) for the bundle of \slr\ 3-forms on \(\N\) and define a subbundle \(\mc{E}_+\) of \(\ww{\bin{2}{3}}\T^*\N\) over \(\ww[+]{3}\T^*\N\) by declaring the fibre over \(\rh \in \ww[+]{3}\T^*\N\) to be:
\ew
\mc{E}_+|_\rh = \lt\{ \om \in \ww{\bin{2}{3}}\T^*\N|_\rh ~\m|~ \cal{I}_\rh (\fr{p}\om) \text{ has signature } (3,3) \text{ and } (\fr{p}\om)^3 < 0 \rt\}
\eew
(see \eref{calIrh} for the definition of \(\cal{I}_\rh\)).  Likewise, write \(\ww[-]{3}\T^*\N\) for the bundle of \slc\ 3-forms on \(\N\) and define a subbundle \(\mc{E}_-\) of \(\ww{\bin{2}{3}}\T^*\N\) over \(\ww[-]{3}\T^*\N\) by:
\ew
\mc{E}_-|_\rh = \lt\{ \om \in \ww{\bin{2}{3}}\T^*\N|_\rh ~\m|~ \cal{J}_\rh (\fr{p}\om) \text{ has signature } (2,4) \rt\}
\eew
(see \eref{calJrh} for the definition of \(\cal{J}_\rh\)).

Now, let \(\rh\) be an \(\SL\) form, i.e.\ a section of \(\ww[\pm]{3}\T^*\N\), as appropriate.  I term \(\rh\) extendible if there exists a lift \(\om\) of the section \(\rh\) along the map \(\mc{E}_\pm \to \ww[\pm]{3}\T^*\N\):
\ew
\bcd
& \mc{E}_\pm \ar[d]\\
\N \ar[ur, dashed, "\om"] \ar[r, ^^22 \rh ^^22] & \ww[\pm]{3}\T^*\M
\ecd
\eew
\end{Defn}

\begin{Prop}\label{ext-off}
Let \(\N\) be an oriented 6-manifold and let \(\rh\) be a (closed) \(\SL\) form on \(\N\).  Then, the following are equivalent:
\begin{itemize}
\item There exists an oriented 7-manifold with boundary \(\M\) together with a (closed) \sg\ 3-form \(\sph\) such that \(\N\) is a connected component of \(\del\M\) and \(\sph|_\N = \rh\);
\item \(\rh\) is extendible.
\end{itemize}
\end{Prop}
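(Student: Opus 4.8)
The plan is to prove the two implications separately, using \pref{SRP} and \pref{TRP} as a pointwise dictionary between the bundle-theoretic notion of extendibility in \dref{ext-def} and the existence of a bounding (closed, or merely \sg-type) 3-form. The bridge is the fibrewise splitting $\T_x\M = \bb{B}^\bot \ds \bb{B}$ with $\bb{B} = \T_x\N$ used in the discussion preceding \pref{SRP}: since $\rh$ is an \(\SL\) form, $\bb{B}$ is spacelike or timelike (according to the type of $\rh$, never null), so one may write $\sph|_x = \th \w \om + \rh$ with $\th$ a correctly oriented generator of $\Ann(\bb{B})$ and $\om \in \ww{2}\bb{B}^*$. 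The signature conditions cutting out $\mc{E}_\pm$ in \dref{ext-def} are, by construction, exactly the conditions of those two propositions, so that a lift of $\rh$ to $\mc{E}_\pm$ is precisely a 2-form $\om \in \Om^2(\N)$ (the image under $\fr{p}$) with $\cal{J}_\rh\om$ of signature $(2,4)$ (spacelike case), resp.\ $\cal{I}_\rh\om$ of signature $(3,3)$ and $\om^3 < 0$ (timelike case), at every point.

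For the forward implication, suppose $\M$ and $\sph$ exist with $\N$ a connected component of $\del\M$ and $\sph|_\N = \rh$. Choose a boundary collar, giving a coordinate $t$ near $\N$ with transverse field $\del_t$, and set $\om = \io^*(\del_t \hk \sph) \in \Om^2(\N)$, where $\io:\N\emb\M$; this is a smooth 2-form. Then at each $x \in \N$ one has $\sph|_x = \dd t \w \om + \rh$ with $\th = \dd t$. Because $\sph|_x$ is a \sg\ 3-form, \pref{SRP} or \pref{TRP} (according to the type of $\rh$) shows that $\om$ satisfies the relevant signature condition, i.e.\ $\om$ is a section of $\mc{E}_\pm$ over $\N$; hence $\rh$ is extendible. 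In the closed case, $\rh = \io^*\sph$ is automatically closed, since $\dd\sph = 0$.

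For the reverse implication, let $\om$ be a lift and work on a collar, writing $\om,\rh$ also for their pullbacks. In the non-closed case take the full product collar $\M = \N \x [0,1)$ and set $\sph = \dd t \w \om + \rh$; since $\rh_t = \rh$ is constant and $\om \in \mc{E}_\pm|_\rh$, \pref{SRP}/\pref{TRP} make $\sph$ a \sg\ 3-form everywhere, with $\sph|_{\N\x\{0\}} = \rh$. In the closed case the key observation is to set
\[
\sph = \rh + \dd(t\om) = \rh + \dd t\w\om + t\,\dd\om,
\]
which is manifestly closed (as $\rh$ is closed and $\dd(t\om)$ is exact) and restricts to $\rh$ on $\N\x\{0\}$. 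Writing $\rh_t = \rh + t\,\dd\om$, one has $\sph|_{(x,t)} = \dd t\w\om + \rh_t$, so by \pref{SRP}/\pref{TRP} this is a \sg\ 3-form precisely when $\rh_t$ is an \(\SL\) form and $\om$ satisfies the signature condition relative to $\rh_t$.

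I expect this last point to be the main obstacle: the correction $t\,\dd\om$ perturbs $\rh$, and there is no global control on $\dd\om$ when $\N$ is non-compact, so a collar of fixed thickness need not keep $\sph$ of \sg-type. This is resolved by shrinking the collar pointwise. The conditions "$\rh_t$ is an \(\SL\) form" and "$\om$ satisfies the signature condition with respect to $\rh_t$" are open and hold at $t = 0$ for every $x$, so the locus $U \pc \N\x[0,\0)$ where they hold is an open neighbourhood of $\N\x\{0\}$. A standard partition-of-unity argument produces a smooth $\ep:\N\to(0,\0)$ with $\{(x,t) \mid 0 \le t < \ep(x)\} \pc U$; taking $\M$ to be this region gives an oriented 7-manifold with $\del\M = \N$ on which $\sph = \rh + \dd(t\om)$ is a closed \sg\ 3-form restricting to $\rh$. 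This establishes both directions and hence the equivalence.
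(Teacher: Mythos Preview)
Your proof is correct and follows essentially the same approach as the paper's: build \(\sph = \dd t \w \om + \rh + t\,\dd\om\) on a collar and shrink the collar pointwise using openness, and run the construction in reverse for the forward implication via a boundary collar. The only notable differences are cosmetic: the paper uses the single formula \(\sph = \dd t \w \om + \rh + t\,\dd\om\) for both the closed and non-closed cases (observing \(\dd\sph = \pi^*\dd\rh\)), whereas you treat them separately; and the paper invokes stability of \sg\ 3-forms directly rather than separately checking that \(\rh_t\) remains an \(\SL\) form and that \(\om\) still satisfies the signature condition relative to \(\rh_t\). Your phrasing ``precisely when'' slightly overstates what \pref{SRP}/\pref{TRP} assert (they presuppose \(\rh\) is already an \(\SL\) form), but since you only use the openness of these conditions at \(t=0\) this does no harm.
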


\begin{proof}
Suppose that \(\rh\) is extendible and let \(\om\) be a lift of \(\rh\) along \(\mc{E}_\pm \to \ww[\pm]{3}\T^*\N\).  Let \(f:\N \to (0,\infty)\) be chosen later and consider the manifold:
\ew
\M = \lt\{ (t,p) \in [0,\infty) \x \N ~\m|~ 0 \le t < f(p) \rt\}.
\eew
Writing \(\pi:\M \to \N\) for the natural projection, define a 3-form \(\sph\) on \(\M\) via:
\ew
\sph = \dd t \w \pi^*(\fr{p}\om) + \pi^*\rh + t\pi^*\dd(\fr{p}\om).
\eew
By \prefs{SRP} and \ref{TRP}, \(\sph\) is of \sg-type along \(\{0\} \x \N\) and hence, by the stability of \sg\ 3-forms, it is of \sg-type on all of \(\M\) if \(f(p)\) is sufficiently small, depending on \(p\in \N\).  Moreover, \(\dd\sph = \pi^*\dd\rh\) and thus if \(\rh\) is closed on \(\N\), then \(\sph\) is closed on \(\M\), as claimed.

Conversely if \(\N\) is a connected component of \(\del\M\), then by the Collar Neighbourhood Theorem (\cite[Thm.\ 9.25]{ItSM}; cf.\ also \cite[Lem.\ 5]{LFIoTM}) there is an open neighbourhood of \(\N\) in \(\M\) which is diffeomorphic to \([0,1) \x \N\).  One now simply applies the above argument in reverse.

\end{proof}

I next prove \tref{htpy->cob}.  Recall the statement of the theorem:\vs{3mm}

\noindent{\bf Theorem \ref{htpy->cob}.}
\em Let \(\N\) be a 6-manifold and let \(\rh,\rh'\) be closed, extendible \(\SL\) forms on \(\N\).  Suppose that \(\rh\) and \(\rh'\) are homotopic and lie in the same cohomology class. Then, \((\N,\rh)\) and \((\N,\rh')\) are \sg-cobordant.\vs{2mm}\em

\begin{proof}
Let \(\rh_t\) denote a homotopy of sections of \(\ww[\pm]{3}\T^*\N\) over \(\N\) such that \(\rh_0 = \rh\) and \(\rh_1 = \rh'\), and choose a lift \(\om\) of \(\rh\) along \(\mc{E}_\pm \to \ww[\pm]{3}\T^*\N\).  Using the covering homotopy property for fibre bundles \cite[Ch.\ III, Thm.\ 4.1]{HT}, there is a homotopy of sections \(\om_t:\N\to\mc{E}_\pm\) such that for each \(t\in[0,1]\), \(\om_t\) is a lift of \(\rh_t\) along \(\mc{E}_\pm \to \ww[\pm]{3}\T^*\N\).

Now, consider the space \(\M = [0,1]_t\x\N\) and write \(\pi:\M \to \N\) for the natural projection.  The families \(\fr{p}\om_t\), \(\dd_\N\fr{p}\om_t\) and \(\rh_t\) (where \(\dd_\N\) denotes the usual exterior derivative on \(\N\)) naturally define sections of the bundles \(\pi^*\ww{2}\T^*\N\) and \(\pi^*\ww{3}\T^*\N\) over \(\M\), as appropriate.  Let \(f_1:\M \to [0,\infty)\) be a smooth function which is identically 1 on an open neighbourhood of \(\{0\} \x \N\), but which vanishes outside some larger neighbourhood of \(\{0\} \x \N\).  Likewise, let \(f_2:\M \to [0,\infty)\) be identically 1 on a small open neighbourhood of \(\{1\} \x \N\) and identically 0 outside some larger open neighbourhood.  Define a 3-form \(\sph\) on \(\M\) via:
\ew
\sph = \dd t \w (\fr{p}\om_t) + \rh_t + [t f_1 + (t-1)f_2] \dd_\N(\fr{p}\om_t).
\eew
Then, by \prefs{SRP} and \ref{TRP}, \(\dd t \w (\fr{p}\om_t) + \rh_t\) is of \sg-type on \(\M\); hence, by the stability of \sg\ 3-forms, if the supports of \(f_1\) and \(f_2\) are chosen to be sufficiently small, then \(\sph\) too is of \sg-type.  Moreover, writing \(\al\) for the common cohomology class defined by \(\rh\) and \(\rh'\) and identifying the cohomology rings of \(\N\) and \(\M\) as usual, a direct calculation shows that \(\dd\sph = 0\) on \(\{f_1 \equiv 1\} \cup \{f_2 \equiv 1\}\) and that \(\sph\) represents the restriction of the class \(\al\) to \(\{f_1 \equiv 1\} \cup \{f_2 \equiv 1\}\).  Let \(\Op(\del \M) \cc \{f_1 \equiv 1\} \cup \{f_2 \equiv 1\}\) denote a small open neighbourhood of \(\del\M\) in \(\M\) and define:
\caw
\Om^3_{\svph_0}\lt(\M;\sph|_{\Op(\del \M)}\rt) = \lt\{ \sph' \in \Om^3_{\svph_0}(\M) ~\m|~ \sph'|_{\Op(\del\M)} = \sph|_{\Op(\del\M)}\rt\};\\
\CL^3_{\svph_0}\lt(\al;\sph|_{\Op(\del \M)}\rt) = \lt\{ \sph' \in \Om^3_{\svph_0}(\M;\sph'|_{\Op(\del \M)}) ~\m|~ \dd\sph' = 0 \text{ and } [\sph'] = \al \in \dR{3}(\M)\rt\}.
\caaw
Then, \(\sph\) defines an element of \(\Om^3_{\svph_0}\lt(\M;\sph|_{\Op(\del \M)}\rt)\).  It was proven by the author in \cite[Thm.\ 9.44]{RhPfCSF} that \(\CL^3_{\svph_0}\lt(\al;\sph|_{\Op(\del \M)}\rt) \emb \Om^3_{\svph_0}\lt(\M;\sph|_{\Op(\del \M)}\rt)\) is a homotopy equivalence.  Thus, one can deform \(\sph \in \Om^p_{\svph_0}\lt(\M;\sph|_{\Op(\del \M)}\rt)\) relative to \(\Op(\del \M)\) into a closed \sg\ 3-form \(\sph'\) on \(\M\) (representing the class \(\al\)). The pair \((\M,\sph')\) then gives the required cobordism from \((\N,\rh)\) to \((\N,\rh')\).

\end{proof}
~

\section{Topological properties of \slc\ 3-forms}

The aim of this section is to investigate when two \slc\ 3-forms are homotopic, and when a single \slc\ 3-form is extendible.\\

\subsection{\boldmath Homotopic \(\SL(3;\bb{C})\) 3-forms}\label{Htpic-SLC}

In this subsection, I prove \tref{SLC-htpy}.  Recall the statement of the theorem:\vs{3mm}

\noindent{\bf Theorem \ref{SLC-htpy}.}
\em There is a bijective correspondence between homotopy classes of \slc\ 3-forms on \(\N\) (equivalently closed \slc\ 3-forms, or \slc\ 3-forms in any fixed degree 3 cohomology class) and spin structures on \(\N\).  In particular, \(\mc{SL}_\bb{C}(\N)\) is a torsor for the group \(\sch{1}{\N,\rqt{\bb{Z}}{2\bb{Z}}}\), and likewise for closed \slc\ 3-forms, or \slc\ 3-forms in a fixed cohomology class.
\vs{2mm}\em

I begin by remarking that the existence of the faithful, transitive action of \(\sch{1}{\N; \rqt{\bb{Z}}{2\bb{Z}}}\) on \(\mc{SL}_\bb{C}(\N)\) can be proven directly via classical Obstruction Theory, without any reference to spin structures.  Indeed, the fibre of the bundle \(\ww[-]{3}\T^*\N\) of \slc\ 3-forms over \(\N\) is homeomorphic to \(\rqt{\GL_+(6;\bb{R})}{\SL(3;\bb{C})}\), which deformation retracts onto the space \(\rqt{\SO(6)}{\SU(3)} \cong \bb{RP}^7\).  Since \(\pi_n\lt(\bb{RP}^7\rt) = 0\) for \(n = 2,...,6\), classical Obstruction Theory (see \cite[Thm.\ 6.13]{EoHT}) implies that the set of homotopy classes of sections of \(\ww[-]{3}\T^*\N\) over \(\N\) is a torsor for the group \(\sch{1}{\N;\pi_1\lt(\ww[-]{3}\T^*_\pt\N\rt)}\), where \(\pi_1\lt(\ww[-]{3}\T^*_\pt\N\rt)\) denotes the bundle of groups over \(\N\) given by the first fundamental groups of the fibres of \(\ww[-]{3}\T^*\N\).  Since \(\pi_1\lt(\bb{RP}^7\rt) \cong \rqt{\bb{Z}}{2\bb{Z}}\) has no non-trivial automorphisms, the bundle \(\pi_1\lt(\ww[-]{3}\T^*_\pt\N\rt)\) itself must be trivial (or simple, in the terminology of \cite{EoHT}; see p.\ 263 {\it op.\ cit.}) and thus \(\sch{1}{\N;\pi_1\lt(\ww[-]{3}\T^*_\pt\N\rt)}\) is simply the usual cohomology group \(\sch{1}{\N;\rqt{\bb{Z}}{2\bb{Z}}}\).  Thus, the set of homotopy classes of \slc\ 3-forms is a torsor for \(\sch{1}{\N;\rqt{\bb{Z}}{2\bb{Z}}}\), as claimed.

The action of \(\ch \in \sch{1}{\N;\rqt{\bb{Z}}{2\bb{Z}}}\) on \(\mc{SL}_\bb{C}(\N)\) admits a very explicit description in the case that \(\ch\) lies in the image of the natural map \(r_2:\sch{1}{\N;\bb{Z}} \to \sch{1}{\N;\rqt{\bb{Z}}{2\bb{Z}}}\).  Indeed, firstly note that, given any \(\rh \in \ww[-]{3}\lt(\bb{R}^6\rt)^*\), the map:
\e\label{U1-htpy}
\bcd[row sep = 0pt]
\ga_\rh: \Un(1) \ar[r]& \ww[-]{3}\lt(\bb{R}^6\rt)^*\\
e^{i\th} \ar[r, maps to]& \cos(\th) \rh + \sin(\th) J_\rh^*\rh
\ecd
\ee
generates the first fundamental group of \(\ww[-]{3}\lt(\bb{R}^6\rt)^*\).  Next, recall that the cohomology group \(\sch{1}{\N;\bb{Z}}\) can be identified with the space of homotopy classes of maps \(\N \to \Un(1)\).  Thus, let \(\rh\) be an \slc\ 3-form on \(\N\) representing the homotopy class \([\rh] \in \mc{SL}_\bb{C}(\N)\), pick some \(\ch' \in r_2^{-1}(\ch) \cc \sch{1}{\N;\bb{Z}}\) and choose some \(f: \N \to \Un(1)\) representing the class \(\ch'\).  Then, \(\ch \1 [\rh] \in \mc{SL}_\bb{C}(\N)\) can be explicitly represented by the \slc\ 3-form \(\rh' = \fr{Re}(f)\rh + \fr{Im}(f) J^*_\rh\rh\).

I now return to the full statement of \tref{SLC-htpy}.  Fix a Riemannian metric \(g\) on \(\N\) and write \(\mc{P}\) for the \(\SO(6)\)-structure on \(\N\) induced by \(g\).  Recall that a spin structure on \(\N\) is a principal \(\Spin(6)\)-bundle \(\mc{Q}\) together with a 2-sheeted covering map \(q: \mc{Q} \to \mc{P}\) such that the following diagram commutes:
\ew
\bcd
& \hs{3mm} \mc{Q} \ar[loop right] \hs{3mm} \ar[dl] \ar[dd, "q"] & \Spin(6) \ar[dd]\\
\N & &\\
& \hs{3mm} \mc{P} \ar[loop right] \hs{3mm} \ar[ul] & \SO(6)
\ecd
\eew
(The reader may wish to note that the bundle \(\mc{Q}\) alone does not determine the map \(q\); see, e.g.\ \cite[p.\ 84, Remark 1.14]{SG}.)  I now prove \tref{SLC-htpy}.

\begin{proof}[Proof of \tref{SLC-htpy}]
Firstly, note that homotopy classes of \slc\ 3-forms (equivalently, \slc-structures) on \(\N\) correspond bijectively to homotopy classes of principal \(\SU(3)\)-subbundles of \(\mc{P}\).  Indeed, writing \(\mc{F}_+\N\) for the oriented frame bundle of \(\N\), \slc-structures on \(\N\) are equivalent to sections of the bundle \(\rqt{\mc{F}_+\N}{\SL(3;\bb{C})}\), and likewise principal \(\SU(3)\)-subbundles \(\mc{P}\) are equivalent to sections of the bundle \(\rqt{\mc{P}}{\SU(3)}\).  The equivalence now follows from the observation that the fibres of the natural map \(\rqt{\mc{F}_+\N}{\SL(3;\bb{C})} \to \rqt{\mc{P}}{\SU(3)}\) are contractible.  Thus, to complete the proof of \tref{SLC-htpy}, it suffices to prove that there exists a map \(\si\) from homotopy classes of \(\SU(3)\)-subbundles of \(\mc{P}\) to spin structures on \((\N,g)\) and that \(\si\) is bijective.

The existence of the map \(\si\) is essentially well-known (see, e.g. \cite[Prop.\ 3.6.2]{CMwSH} for a related result).  Indeed, let \(\mc{R} \pc \mc{P}\) be an \(\SU(3)\)-subbundle.  Consider the diagram:
\e\label{lifting-diagram}
\bcd
& \Spin(6) \ar[d, "\pi"]\\
\SU(3) \ar[ur, dashed, hook, "\vrh"] \ar[r, hook, "\io"] & \SO(6)
\ecd
\ee
Since \(\SU(3)\) is simply connected, Covering Space Theory implies that there is a unique homomorphism \(\SU(3) \oto{\vrh} \Spin(6)\) lifting the inclusion \(\SU(3) \overset{\io}{\emb} \SO(6)\) along the homomorphism \(\pi: \Spin(6) \to \SO(6)\).  Diagram \eqref{lifting-diagram} induces a diagram of bundles:
\ew
\bcd
& \mc{R} \x_\vrh \Spin(6) \ar[d, "q"]\\
\mc{R} \ar[ur, hook] \ar[r, hook] & \mc{R} \x_\io \SO(6) \cong \mc{P}
\ecd
\eew
and thus, setting \(\mc{Q} = \mc{R} \x_\vrh \Spin(6)\) (together with the natural map \(q\) induced by \(\pi:\Spin(6) \to \SO(6)\)), it has been shown that every \(\SU(3)\)-subbundle \(\mc{R} \pc \mc{P}\) canonically induces a spin structure on \(\N\), which clearly depends only on the homotopy class of \(\mc{R}\), thus defining the map \(\si\).

Before proving that \(\si\) is bijective, it is useful to note that the spin structure induced by \(\mc{R}\) may alternatively be characterised as follows.  Given any choice of spin structure \((\mc{Q},q)\) on \((\N,g)\), the bundle \(\mc{R}' = q^{-1}(\mc{R})\) defines an \(\lt(\SU(3) \x \{\pm1\}\rt)\)-subbundle of \(\mc{Q}\).  Clearly if \((\mc{Q},q)\) is the spin structure induced by \(\mc{R}\), then \(q:\mc{R}' \to \mc{R}\) is a disconnected degree 2 cover, i.e.\ \(\mc{R}' \cong \mc{R} \x \{\pm1\}\).  Conversely, if \(\mc{R}' \cong \mc{R} \x \{\pm1\}\), then:
\ew
\mc{Q} \cong \mc{R}' \x_{\lt(\SU(3) \x \{\pm1\}\rt)} \Spin(6) \cong (\mc{R} \x \{\pm1\}) \x_{\lt(\SU(3) \x \{\pm1\}\rt)} \Spin(6) \cong \mc{R} \x_{\SU(3)} \Spin(6)
\eew
with \(q\) defined accordingly, and thus \((\mc{Q},q)\) is precisely the spin structure on \(\N\) induced by \(\mc{R}\).

Using this observation, I now prove that \(\si\) is bijective.  Given a choice of spin structure \((\mc{Q},q)\) on \(\N\), consider the bundle \(\rqt{\mc{Q}}{\SU(3)}\), where one identifies \(\SU(3) \pc \Spin(6)\) via \(\vrh\), and observe that sections of the bundle \(\rqt{\mc{Q}}{\SU(3)}\) correspond to \(\SU(3)\)-subbundles of \(\mc{Q}\).  Since \(\Spin(6) \cong \SU(4)\) (see \cite[Ch.\ I, Thm.\ 8.1]{SG}), \(\rqt{\Spin(6)}{\SU(3)} \cong \rqt{\SU(4)}{\SU(3)} \cong S^7\) and thus it follows from classical Obstruction Theory (see \cite[Thms.\ 6.11 \& 6.12]{EoHT}) that \(\mc{Q}\) admits an \(\SU(3)\)-subbundle and that any two such subbundles are homotopic.  Given such a subbundle \(\mc{R}'\), the image \(\mc{R} = q(\mc{R}') \pc \mc{P}\) defines an \(\SU(3)\)-subbundle of \(\mc{P}\) and since \(q^{-1}(\mc{R}) = \mc{R}' \coprod -\mc{R}' \cong \mc{R} \x \{\pm1\}\), \((\mc{Q},q)\) is precisely the spin structure induced by \(\mc{R}\); thus the map \(\si\) is surjective.  Moreover, since homotopic \(\SU(3)\)-subbundles of \(\mc{Q}\) give rise to homotopic \(\SU(3)\)-subbundles of \(\mc{P}\), the injectivity of \(\si\) is now clear.

\end{proof}

\begin{Rks}~

1. The above argument provides an alternative proof that \(\sch{1}{\N;\rqt{\bb{Z}}{2\bb{Z}}}\) acts faithfully and transitively on \(\mc{SL}_\bb{C}(\N)\), by using the well known result (see \cite[p.\ 82, Thm.\ 1.7]{SG}) that the set of spin structures on a spin manifold \(\N\) form a torsor for the group \(\sch{1}{\N;\rqt{\bb{Z}}{2\bb{Z}}}\).

2. Returning to the perspective of classical Obstruction Theory, recall from \cite[Thm.\ 6.11]{EoHT} that the primary (and, in this case, only) obstruction to the existence of a section of \(\ww[-]{3}\T^*\N\) is determined by an obstruction class:
\ew
\ga \in \sch{2}{\N;\pi_1\lt(\ww[-]{3}\T^*_\pt\N\rt)} \cong \sch{2}{\N;\rqt{\bb{Z}}{2\bb{Z}}}.
\eew
\tref{SLC-htpy} shows that \(\ga\) is simply the second Stiefel--Whitney class of \(\N\).\\
\end{Rks}

Note that the bundle \(\rqt{\mc{Q}}{\SU(3)}\) arising in the above proof is essentially the unit sphere bundle in the spinor bundle \(\mc{S}(\N) = \mc{Q} \x_{\Spin(6)} \bb{C}^4\) associated to \((\mc{Q},q)\), where \(\Spin(6)\) acts on \(\bb{C}^4\) via the identification \(\Spin(6) \cong \SU(4)\).  Using this observation, it is possible to provide a very explicit description of the correspondence between homotopy classes of \slc\ 3-forms and spin structures.  Indeed, fix a choice of spin structure \((\mc{Q},q)\) on \(\N\) and observe that the rank 10 complex vector bundle \(\ss[\bb{C}]{2}\mc{S}(\N)\) is isomorphic to the bundle \(\ww[\bb{C}SD]{3}\T^*\N\) of complex self-dual 3-forms, i.e.\ 3-forms \(\al\) satisfying \(\Hs\al = i\al\), where \(\Hs\) denotes the Hodge star induced by the metric \(g\).\footnote{The author wishes to thank Nigel Hitchin for bringing this isomorphism to his attention.}  Given a non-zero section \(\vsi\) of \(\mc{S}\), the section \(\vsi \ts \vsi \in \ss[\bb{C}]{2}\mc{S}(\N)\) corresponds to a complex 3-form \(\al_\vsi\) and thus to a real 3-form \(\rh_\vsi = \al_\vsi + \-{\al_\vsi}\).  Since the stabiliser of \(\vsi\) in \(\Spin(6)\) at each point of \(\N\) is isomorphic to \(\SU(3)\), the stabiliser of \(\rh_\vsi\) in \(\SO(6)\) at each point of \(\N\) is also isomorphic to \(\SU(3)\), and thus \(\rh_\vsi\) is an \slc\ 3-form such that the metric \(g\) is Hermitian \wrt\ \(J_\rh\).  Since all non-zero sections of \(\mc{S}(\N)\) are homotopic, it is immediately clear that all \slc\ 3-forms obtained in this way are likewise homotopic.

Conversely, given an \slc\ 3-form \(\rh\), choose a Hermitian metric \(g\) on \(\N\) (\wrt\ \(J_\rh\)).  For each spin structure \((\mc{Q},q)\) on \((\N,g)\), there is a unique section \(\vsi\) of the bundle \(\rqt{\mc{S}(\N)\osr\N}{\{\pm1\}}\) such that \(\rh = \rh_\vsi\) (which is well-defined, since \(\rh_{-s} = \rh_s\) for any non-zero spinor \(s\)).  It follows from the proof of \tref{SLC-htpy} that there is a unique spin structure \((\mc{Q},q)\) such that the section \(\vsi\) of \(\rqt{\mc{S}(\N)\osr\N}{\{\pm1\}}\) can be lifted to a section of \(\mc{S}(\N)\osr\N\) and this is precisely the spin structure induced by \(\rh\).

I end this subsection by providing some explicit examples.\\

\begin{Exs}~

1. Consider the torus \(\N = \bb{T}^6\) and let \(\rh_-\) denote the `standard' \slc\ 3-form on \(\bb{T}^6\), defined by identifying \(\T\lt(\bb{T}^6\rt)\) with \(\bb{T}^6 \x \bb{R}^6\).  Since \(\sch{1}{\bb{T}^6; \rqt{\bb{Z}}{2\bb{Z}}} \cong \lt(\rqt{\bb{Z}}{2\bb{Z}}\rt)^6\), \(\bb{T}^6\) admits \(2^6 = 64\) distinct homotopy classes of \slc\ 3-forms.  Moreover, since the map \(\sch{1}{\bb{T}^6; \bb{Z}} \to \sch{1}{\bb{T}^6; \rqt{\bb{Z}}{2\bb{Z}}}\) is surjective, one may provide an explicit description of all 64 classes as follows.  Let \(\lt(x^1,...,x^6\rt)\) denote the canonical periodic coordinates on \(\bb{T}^6 \cong \rqt{\bb{R}^6}{\bb{Z}^6}\) and, for each \(a = (a_1,...,a_6) \in \lt(\rqt{\bb{Z}}{2\bb{Z}}\rt)^6 \cong \sch{1}{\bb{T}^6; \rqt{\bb{Z}}{2\bb{Z}}}\), consider the map:
\ew
\bcd[row sep = 0pt]
f_a: \bb{T}^6 \ar[r]& \Un(1)\\
\lt(x^1,...,x^6\rt) \ar[r, maps to]& \exp\lt(i\sum_{j=1}^6 a_j x^j\rt).
\ecd
\eew
The map \(f_a\) represents a cohomology class in \(\sch{1}{\bb{T}^6;\bb{Z}}\) which maps to \(a\) under \(r_2: \sch{1}{\bb{T}^6;\bb{Z}} \to \sch{1}{\bb{T}^6; \rqt{\bb{Z}}{2\bb{Z}}}\).  It follows that the 64 homotopy classes of \slc\ 3-forms on \(\bb{T}^6\) can be explicitly represented by the 3-forms:
\ew
\rh_a = \cos\lt(\sum_{j=1}^6 a_j x^j\rt)\rh_- + \sin\lt(\sum_{j=1}^6 a_j x^j\rt)J_{\rh_-}\rh_-.
\eew

2. Let \((\N,J,g)\) be a compact, Hermitian manifold.  By \cite[Prop.\ 3.2]{RS&SS} (see also \cite[Thm.\ 2.2]{HS}), there is a bijective correspondence between spin structures on \(\N\) and holomorphic square roots of the canonical bundle \(\ww{3,0}\T^*\N\).  Thus, given an \slc\ 3-form \(\rh\) on \(\N\) such that:
\e\label{compatible}
J_\rh = J,
\ee
Theorem 9.4.1 predicts that \(\rh\) defines a holomorphic square root of \(\ww{3,0}\T^*\N\).

In the case where \(\ww{3,0}\T^*\N \cong \mc{O}\) (i.e.\ \((\N,J,g)\) has trivial canonical bundle) this may be seen directly as follows.  Initially, let \(\rh\) be an \slc\ 3-form on \(\N\) satisfying \eref{compatible} such that \(\dd\rh = \dd J^*\rh = 0\).  Then, \(\Om = \rh + iJ^*\rh\) defines a non-zero holomorphic \((3,0)\)-form on \(\N\), hence a holomorphic trivialisation of \(\ww{3,0}\T^*\N\) and whence a natural square root of \(\ww{3,0}\T^*\N\), {\it viz.}\ \(\mc{O}\).

Now, let \(\rh'\) be an arbitrary \slc\ 3-form satisfying \eref{compatible}.  Firstly, note that \(\rh'\) canonically defines a class \(\de_{\rh'}\) in \(\sch{1}{\N;\rqt{\bb{Z}}{2\bb{Z}}}\).  Indeed, \(\rh'\) defines a unique map \(f_{\rh'}: \N \to \bb{C}\osr\{0\}\) via \(\rh' + iJ^*\rh' = f\Om\).  Define \(\de_{\rh'}\) to be the reduction modulo 2 of the pullback of the canonical generator \({\bf 1} \in \sch{1}{\bb{C}\osr\{0\};\bb{Z}}\) along the map \(f_{\rh'}\).  Next, by \cite[p.\ 15]{HS}, the space of holomophic square roots of \(\ww{3,0}\T^*\N\) is naturally a torsor for the group \(\sch{1}{\N; \rqt{\bb{Z}}{2\bb{Z}}}\).  Indeed, the short exact sequence of sheaves:
\ew
\bcd[row sep = 0pt]
{\bf 1} \ar[r] & \rqt{\bb{Z}}{2\bb{Z}} \ar[r] & \mc{O}^* \ar[r] & \mc{O}^* \ar[r] & {\bf 1}\\
& & f \ar[r, maps to] & f^2 &
\ecd
\eew
induces a sequence:
\ew
\bcd[row sep = 0pt]
0 \ar[r] & \sch{1}{\N; \rqt{\bb{Z}}{2\bb{Z}}} \ar[r] & \sch{1}{\N;\mc{O}^*} \ar[r] & \sch{1}{\N;\mc{O}^*}\\
& & L \ar[r, maps to] & L^{\ts2}
\ecd
\eew
as claimed (recalling that \(\sch{1}{\N;\mc{O}^*}\) may be identified with the Picard group of \(\N\)).  The square root of \(\ww{3,0}\T^*\N\) defined by the \slc\ 3-form \(\rh'\) is then simply \(\de_{\rh'}\1\mc{O}\).

I end this subsection by remarking on one interesting aspect of this case.  By Theorem 9.4.1, two \slc\ 3-forms \(\rh'\) and \(\rh''\) satisfying \eref{compatible} are homotopic through arbitrary \slc\ 3-forms \iff\ \(\de_{\rh'} = \de_{\rh''}\).  However, clearly \(\rh'\) and \(\rh''\) are homotopic through \slc\ 3-forms satisfying \eref{compatible} \iff\ the induced maps \(f_{\rh'}, f_{\rh''}:\N \to \bb{C}\osr\{0\}\) are homotopic, which occurs \iff\ the classes \(f_{\rh'}^*{\bf 1}\) and \(f_{\rh''}^*{\bf 1}\) in \(\sch{1}{\N;\bb{Z}}\) coincide.  Thus, in general, there exist pairs of homotopic \slc\ 3-forms, each satisfying \eref{compatible}, which nevertheless cannot be connected by any path of \slc\ 3-forms satisfying \eref{compatible}.\\
\end{Exs}

\subsection{\boldmath Extendibility of \(\SL(3;\bb{C})\) 3-forms}\label{SLC-Ext}

By \dref{ext-def}, \(\rh\) is extendible \iff\ the almost complex manifold \((\N,J_\rh)\) admits a pseudo-Hermitian metric of (real) signature \((2,4)\).  In general, the existence of metrics of indefinite signature is an open problem and thus completely classifying when \(\rh\) is extendible appears unfeasible at this time.  Nevertheless, much insight into the extendibility of \(\rh\) can be gained from the following proposition:

\begin{Prop}\label{C-ext-prop}
Let \(\N\) be an oriented 6-manifold and \(\rh\) an \slc\ 3-form on \(\N\).  Then, \(\rh\) is extendible \iff\ the (complex) projectivised tangent bundle of \(\N\), \(\bb{P}_\bb{C}(\T\N,J_\rh)\), admits a global section, i.e.\ \((\T\N,J_\rh)\) admits a complex line subbundle.
\end{Prop}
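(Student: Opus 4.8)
The plan is to reduce the statement to a standard fact about reductions of structure group for complex vector bundles. As recalled at the start of this subsection (see \dref{ext-def}, together with \pref{SRP}), the form \(\rh\) is extendible if and only if the almost complex manifold \((\N, J_\rh)\) admits a pseudo-Hermitian metric of real signature \((2,4)\); equivalently, regarding \((\T\N, J_\rh)\) as a complex vector bundle of rank 3, such a metric is a Hermitian metric \(h\) of complex signature \((1,2)\). It therefore suffices to prove that \((\T\N, J_\rh)\) admits such an indefinite Hermitian metric if and only if it admits a complex line subbundle, the latter being precisely a global section of \(\bb{P}_\bb{C}(\T\N, J_\rh)\).

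For the reverse implication I would first fix, using paracompactness of \(\N\), an auxiliary positive-definite Hermitian metric \(h_0\) on \((\T\N, J_\rh)\). Given a complex line subbundle \(L \pc (\T\N, J_\rh)\), let \(L^\perp\) be its \(h_0\)-orthogonal complement, so that \(\T\N = L \ds L^\perp\) as complex bundles with \(L^\perp\) of complex rank 2. Defining \(h\) to agree with \(h_0\) on \(L\), to equal \(-h_0\) on \(L^\perp\), and to make \(L\) and \(L^\perp\) orthogonal yields a \(J_\rh\)-invariant, non-degenerate Hermitian metric of complex signature \((1,2)\); hence \(\rh\) is extendible.

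For the forward implication, suppose \(h\) is a pseudo-Hermitian metric of complex signature \((1,2)\). Fixing \(h_0\) as above, write \(h = h_0(A\,\cdot\,,\,\cdot\,)\) for the fibrewise \(h_0\)-self-adjoint endomorphism field \(A\). At each point \(A\) has real eigenvalues, exactly one positive and two negative (counted with complex multiplicity), so that \(0\) is never an eigenvalue. The positive eigenspace then defines a complex line subbundle \(L \pc (\T\N, J_\rh)\), equivalently a section of \(\bb{P}_\bb{C}(\T\N, J_\rh)\), which is smooth because the spectral projection onto the positive part of the spectrum depends smoothly on \(A\) so long as the gap in the spectrum at \(0\) persists, as it does here by non-degeneracy of \(h\). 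This closes the equivalence.

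Conceptually, the two constructions are mutually inverse up to homotopy, and the cleanest way to see that no further hypotheses are required is to note that everything is governed by the fibre-homotopy equivalence \(\Un(1,2) \simeq \Un(1) \x \Un(2)\): the bundle whose fibre is the space of complex signature \((1,2)\) Hermitian forms deformation retracts, via the sign-decomposition of the spectrum of \(A\), onto the bundle with fibre \(\rqt{\Un(3)}{\Un(1) \x \Un(2)} \cong \bb{CP}^2 \cong \bb{P}_\bb{C}(\bb{C}^3)\), which is exactly the fibre of \(\bb{P}_\bb{C}(\T\N, J_\rh)\). The one genuinely delicate point I anticipate is the forward direction: one must verify that the pointwise positive eigenline assembles into a bona fide smooth subbundle rather than a mere pointwise choice. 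I expect to dispatch this using the standard fact that, for a continuous family of self-adjoint endomorphisms with a fixed spectral gap at \(0\), the associated spectral projection is continuous, so that its image is a genuine subbundle.
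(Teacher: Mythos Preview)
Your proposal is correct. The reverse implication (complex line subbundle \(\Rightarrow\) extendible) is essentially identical to the paper's argument: split off \(L\) orthogonally with respect to a positive-definite Hermitian metric and flip the sign on \(L^\perp\).

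The forward implication differs in execution. You linearise the problem by writing \(h = h_0(A\,\cdot\,,\,\cdot\,)\) for an \(h_0\)-self-adjoint, \(J_\rh\)-linear endomorphism \(A\), and take the positive spectral subspace of \(A\), invoking continuity of the spectral projection across the gap at \(0\) to get a genuine smooth line subbundle. The paper instead defines the subbundle \(\Pi_{\tld g} \subset \bb{P}_\bb{C}(\T\N,J_\rh)\) of lines on which \(\tld g\) is positive definite, observes that any two such lines at a point are mutually transverse graphs (so \(\Pi_{\tld g}\) has contractible fibres, diffeomorphic to \(\bb{C}^2\)), and concludes that \(\Pi_{\tld g}\), hence \(\bb{P}_\bb{C}(\T\N,J_\rh)\), admits a section. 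Your approach yields a canonical line (the positive eigenline) at the cost of the spectral-smoothness verification; the paper's approach avoids any spectral analysis but gives only an existence statement via the contractible-fibre trick. Both are standard and equally short; your closing remark about the fibre-homotopy equivalence \(\Un(1,2)\simeq \Un(1)\times\Un(2)\) is in fact the common conceptual core of the two arguments.
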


\begin{proof}
Initially, suppose that \(\cal{L} \pc (\T\N,J_\rh)\) is a complex line subbundle.  Let \(g\) be any Hermitian metric (of real signature \((3,0)\)) on \(\N\) and write:
\ew
\T\N = \cal{L} \ds \cal{L}^\bot, \hs{3mm} g = g|_\cal{L} + g|_{\cal{L}^\bot},
\eew
where the orthocomplement is defined \wrt\ \(g\).  Then, \(g_\cal{L} - g_\bot\) is a pseudo-Hermitian metric of (real) signature \((2,4)\).

Conversely, let \(\tld{g}\) be a pseudo-Hermitian metric of real signature \((2,4)\).  Define a subbundle \(\Pi_{\tld{g}} \pc \bb{P}_\bb{C}(\T\N,J_\rh)\) via:
\ew
\lt.\Pi_{\tld{g}}\rt|_p = \lt\{\cal{L}\in\lt.\bb{P}_\bb{C}(\T\N,J_\rh)\rt|_p ~\m|~ \tld{g} \text{ is positive definite on }\cal{L}\pc\T_p\N\rt\}.
\eew
Given \(\cal{L} \in \lt.\Pi_{\tld{g}}\rt|_p\), every other \(\mc{L}' \in \lt.\Pi_{\tld{g}}\rt|_p\) can be written as a graph over \(\cal{L}\).  Thus, \(\Pi_{\tld{g}}\) has contractible fibres and hence admits a global section, and whence so does \(\bb{P}_\bb{C}(\T\N,J_\rh)\).

\end{proof}

I now prove \tref{slc-ext-conds}.  Recall the statement of the theorem:\vs{3mm}

\noindent{\bf Theorem \ref{slc-ext-conds}.}
\em Let \(\N\) be an oriented 6-manifold.  If the Euler class \(e(\N) = 0\), then any \slc\ 3-form on \(\N\) is extendible.  In particular:
\begin{itemize}
\item If \(\N\) is open, then any \slc\ 3-form on \(\N\) is extendible.
\item If \(\N\) is closed and the Euler characteristic \(\ch(\N) = 0\), then any \slc\ 3-form on \(\N\) is extendible.
\end{itemize}
Conversely, if \(e(\N) \ne 0\) and in addition \(b^2(\N) = 0\) (i.e.\ \(\sch{2}{\N;\bb{Z}}\) and \(\sch{4}{\N;\bb{Z}}\) are pure torsion), then no \slc\ 3-form on \(\N\) is extendible.\vs{2mm}\em

\begin{proof}
Firstly note that if \(e(\N) = 0\), then \(\N\) admits a nowhere vanishing vector field.  Indeed, if \(\N\) is closed this follows from \cite{VinDM}, whereas if \(\N\) is open, this follows since \(\N\) deformation retracts onto a subcomplex of its \(5\)-skeleton and every rank 6 vector bundle over a 5-dimensional cell complex admits a nowhere vanishing section.  Thus, let \(X\) be a nowhere vanishing vector field on \(\N\) and let \(\rh\) be an \slc\ 3-form on \(\N\).  Then, the real rank 2 distribution on \(\N\) generated by \(X\) and \(J_\rh X\) defines a complex line subbundle of \((\T\N,J_\rh)\) and hence \(\rh\) is extendible, by \pref{C-ext-prop}.

Conversely, suppose \(e(\N) \ne 0\) (in particular, \(\N\) must be closed) and let \(\rh\) be an extendible \slc\ 3-form on \(\N\).  By \pref{C-ext-prop}, one can write \((\T\N,J_\rh) = \cal{L}_1 \ds \cal{L}_2\) with \(\cal{L}_{1,2}\) complex subbundles of \((\T\N,J_\rh)\) of (complex) ranks 1 and 2 respectively.  Then:
\ew
e(\N) = e(\cal{L}_1) \cup e(\cal{L}_2) \in \sch{6}{\N;\bb{Z}} \cong \bb{Z},
\eew
where \(\cup\) denotes the usual cup-product on cohomology.  Since \(e(\N)\) is non-zero, neither \(e(\cal{L}_1)\) nor \(e(\cal{L}_2)\) can have finite order (since \(\bb{Z}\) is torsion-free).  Thus, \(b^2(\N) \ne 0\), as claimed.

\end{proof}

Using \tref{slc-ext-conds}, it is possible to give many examples of extendible and non-extendible \slc\ 3-forms.\\

\begin{Exs}~\vs{-1mm}
\begin{enumerate}
\item Let \(\K\) be any closed, oriented (connected)  spin 5-manifold and set \(\N = S^1 \x \K\). Then, \(\N\) is also oriented and spin.  Thus, since \(\ch(\N) = 0\) and:
\ew
\sch{1}{S^1 \x \K;\rqt{\bb{Z}}{2\bb{Z}}} \cong \sch{1}{\K;\rqt{\bb{Z}}{2\bb{Z}}} \ds \sch{0}{\K;\rqt{\bb{Z}}{2\bb{Z}}} \cong \sch{1}{\K;\rqt{\bb{Z}}{2\bb{Z}}} \ds \rqt{\bb{Z}}{2\bb{Z}},
\eew
by \tref{SLC-htpy}, the manifold \(\N\) admits \(2^{1+b^1\lt(\N; \lt.\raisebox{1mm}{\(\mfn{\bb{Z}}\)}\middle/\raisebox{-1mm}{\(\mfn{2}\)}\rt.\rt)} \ge 2\) distinct homotopy classes of \slc\ 3-forms, all of which are extendible by \tref{slc-ext-conds}.   As a special case, \(\bb{T}^6\) admits \(2^6 = 64\) distinct homotopy classes of extendible \slc\ 3-forms.

\item Consider the sphere \(S^6\). Clearly \(S^6\) is orientable and spin, and \(\sch{1}{S^6;\rqt{\bb{Z}}{2\bb{Z}}} = 0\).  Thus, \(S^6\) admits a unique homotopy class of \slc\ 3-forms, which is not extendible since \(\ch(S^6) = 2\) and \(b^2(S^6) = 0\).  In particular, consider the `standard' \slc\ 3-form on \(S^6\) induced by the embedding \(S^6\emb\bb{R}^7\), where \(\bb{R}^7\) is equipped with its standard (flat) \g\ 3-form \(\ph_0\).  Then, \(\ph_0|_{S^6}\) is not extendible (to a \sg\ 3-form).

\item Consider the manifold \(Y_1 = \bb{RP}^3 \x \bb{RP}^3\) and let \(Y_n = \underbrace{Y_1 \# ... \# Y_1}_{\text{\(n\) times}}\), where \(\#\) denotes connected sum.  Then, \(Y_n\) is spin and \(\sch{1}{Y_n; \rqt{\bb{Z}}{2\bb{Z}}} \cong \lt(\rqt{\bb{Z}}{2\bb{Z}}\rt)^{2n}\), so \(Y_n\) admits \(2^{2n}\) distinct homotopy classes of \slc\ 3-forms.  However, the Betti numbers of \(Y_n\) are:
\ew
\lt(b^0, b^1,b^2,b^3,b^4,b^5,b^6\rt) = (1,0,0,2n,0,0,1)
\eew
and so \(\ch(Y_n) = 2-2n\), whilst \(b^2(Y_n) = 0\).  Thus, for \(n > 1\), none of the \(2^{2n}\) homotopy classes of \slc\ 3-forms on \(Y_n\) are extendible.\\
\end{enumerate}
\end{Exs}

\section{Topological properties of \slr\ 3-forms}\label{slr}

\subsection{\boldmath Homotopic \(\SL(3;\bb{R})^2\) 3-forms}\label{htpic-slr}

Let \(\N\) be an oriented 6-manifold.
\begin{Prop}\label{slr-oGr3}
Write \(\mc{SL}_\bb{R}(\N)\) for the set of homotopy classes of \slr\ 3-forms on \(\N\) and \(\tld{\mc{G}{\kern-0.7pt}r}_3(\N)\) for the set of homotopy classes of sections of \(\oGr_3(\N)\). Then, there is a bijective correspondence:
\ew
\mc{E}:\mc{SL}_\bb{R}(\N) \to \tld{\mc{G}{\kern-0.7pt}r}_3(\N)
\eew
given by \([\rh] \mt [E_{+,\rh}]\).  In particular, \(\N\) admits \slr\ 3-forms \iff\ it admits oriented rank 3 distributions.  The same conclusions apply to homotopy classes of closed \slr\ 3-forms, or to homotopy classes of closed \slr\ 3-forms representing a fixed cohomology class.
\end{Prop}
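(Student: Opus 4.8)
The plan is to realise $\mc{E}$ as the map induced on homotopy classes of sections by a bundle map with contractible fibres, and then to read off bijectivity from obstruction theory. First I would recall that \slr\ 3-forms on $\N$ are exactly the sections of the bundle $\ww[+]{3}\T^*\N$, whose fibre is the orbit $\GL_+(6;\bb{R})/\SL(3;\bb{R})^2$, whereas oriented rank 3 distributions are exactly the sections of $\oGr_3(\N)$, with fibre $\oGr_3(\bb{R}^6)$. As recalled above, each \slr\ 3-form $\rh$ canonically induces a para-complex structure $I_\rh$ whose $+1$-eigenbundle $E_{+,\rh}$ carries a $\rh$-induced orientation; since $I_\rh$ depends on $\rh$ in a $\GL_+(6;\bb{R})$-equivariant way, the assignment $\rh \mapsto E_{+,\rh}$ is the restriction to sections of an honest bundle map $\Phi:\ww[+]{3}\T^*\N \to \oGr_3(\N)$ over $\N$, with $\mc{E} = \Phi_*$. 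It thus suffices to prove that $\Phi$ induces a bijection on homotopy classes of sections.

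The key step is to compute the fibres of $\Phi$, and here I expect to show they are contractible. Fixing $p \in \N$ and an oriented 3-plane $P \subset \T_p\N$, I would argue that an \slr\ 3-form $\rh$ on $\T_p\N$ satisfies $E_{+,\rh} = P$ (as oriented planes) \iff\ it can be written $\rh = \nu_+ + \nu_-$, where $E_{-,\rh}$ is a 3-dimensional complement to $P$, where $\nu_+ \in \ww{3}P^*$ is a volume form inducing the given orientation of $P$, and where $\nu_- \in \ww{3}E_{-,\rh}^*$ is a volume form, the two pieces being extended to $\T_p\N$ by the splitting; orientation-compatibility of $\rh$ then forces $\nu_+ \w \nu_-$ to be a positive multiple of the ambient volume form. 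The complement varies over an affine space modelled on $\Hom(P,\T_p\N/P) \cong \bb{R}^9$; given it, $\nu_+$ varies over a ray $\cong \bb{R}_{>0}$ and $\nu_-$ is then confined by $\nu_+ \w \nu_- > 0$ to a further ray $\cong \bb{R}_{>0}$. Hence the fibre is homeomorphic to the contractible space $\bb{R}^9 \x \bb{R}_{>0}^2$. (Equivalently, $\Phi$ is associated to the map of homogeneous spaces $\GL_+(6;\bb{R})/\SL(3;\bb{R})^2 \to \GL_+(6;\bb{R})/H$, where $H$ is the stabiliser of an oriented 3-plane, and its fibre $H/\SL(3;\bb{R})^2$ is the contractible $11$-manifold just described.)

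Having established contractibility of the fibre, I would invoke classical obstruction theory (see \cite[Thms.\ 6.11 \& 6.12]{EoHT}; cf.\ the argument of \sref{Htpic-SLC}): since $\Phi$ is a fibre bundle over $\N$ with weakly contractible fibre, the obstructions to lifting a given section of $\oGr_3(\N)$ through $\Phi$, and to joining any two lifts by a homotopy, lie in groups of the form $\sch{k+1}{\N;\pi_k(\bb{R}^9 \x \bb{R}_{>0}^2)}$ and $\sch{k}{\N;\pi_k(\bb{R}^9 \x \bb{R}_{>0}^2)}$, all of which vanish. Thus every section of $\oGr_3(\N)$ lifts, uniquely up to homotopy, and $\mc{E} = \Phi_*$ is a bijection. (Alternatively, $\Phi$ is a fibrewise homotopy equivalence by Dold's theorem and so induces a homotopy equivalence of section spaces.) In particular $\mc{SL}_\bb{R}(\N)$ and $\tld{\mc{G}{\kern-0.7pt}r}_3(\N)$ are simultaneously empty or non-empty, which yields the assertion that $\N$ admits \slr\ 3-forms \iff\ it admits oriented rank 3 distributions. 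Finally, to pass to the closed cases I would appeal to the homotopy equivalences of \eref{HE} for $\si_0 = \rh_+$ (established in \cite{RhPfCSF, TRhPfCSL3R23F}): the inclusions $\CL^3_{\rh_+}(\al) \emb \CL^3_{\rh_+}(\N) \emb \Om^3_{\rh_+}(\N)$ are homotopy equivalences, so the three spaces share the same set of path components; hence homotopy classes of closed \slr\ 3-forms, and of closed \slr\ 3-forms in a fixed class $\al \in \dR{3}(\N)$, both coincide with $\mc{SL}_\bb{R}(\N)$, and $\mc{E}$ transports to them verbatim.

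The main obstacle will be the fibre computation of the second step: pinning down the correct geometric parametrisation of the \slr\ 3-forms with a prescribed oriented $+1$-eigenplane — a choice of complement together with two positive scalings — and checking that the orientation constraint $\nu_+ \w \nu_- > 0$ carves out a contractible set. Once this is in place, the lifting argument and the reduction to the closed cases are routine.
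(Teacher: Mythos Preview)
Your proposal is correct and follows essentially the same approach as the paper: both identify the map $\rh \mapsto E_{+,\rh}$ as a fibre bundle $\ww[+]{3}\T^*\N \to \oGr_3(\T\N)$ whose fibre is the contractible space $\Stab_{\GL_+(6;\bb{R})}(E_+)/\SL(3;\bb{R})^2$, and deduce the bijection on homotopy classes of sections, with the closed cases handled via \eref{HE}. Your geometric parametrisation of the fibre as $\bb{R}^9 \times \bb{R}_{>0}^2$ is simply an explicit unpacking of this quotient, and the paper's argument is just a terser version of yours.
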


\begin{proof}
Write \(\ww[+]{3}\T^*\N\) for the bundle of \slr\ 3-forms over \(\N\) and consider the diagram:
\ew
\bcd
\ww[+]{3}\T^*\N \ar[d, " \rh \mt E_{+,\rh}" '] \ar[dr] &\\
\oGr_3(\T\N) \ar[r] & \N
\ecd
\eew
Then, the left-hand map is a fibration, with fibre diffeomorphic to \(\rqt{\Stab_{\GL_+(6;\bb{R})}(E_+)}{\SL(3;\bb{R})^2}\), where \(E_+ = \<e_1,e_2,e_3\?\) denotes the \(+1\)-eigenspace of the standard para-complex structure \(I_0\) on \(\bb{R}^6\) (see \eref{I0}).  Explicitly:
\ew
\Stab_{\GL_+(6;\bb{R})}(E_+) = \lt\{ \bpm A & C\\  & B \epm ~\m|~ A,B\in\GL_+(3;\bb{R}),  C\in\fr{gl}(3;\bb{R})\rt\}.
\eew
The result thus follows, since the quotient \(\rqt{\Stab_{\GL_+(6;\bb{R})}(E_+)}{\SL(3;\bb{R})^2}\) is contractible.  The final remark follows from the homotopy equivalence in \eref{HE}.

\end{proof}

Explicitly, the inverse to \(\mc{E}\) can be described as follows: given an oriented rank 3 distribution \(E\) on \(\N\), choose a distribution \(E'\) such that \(\ts \N = E \ds E'\).  Since \(E\) and \(\ts \N\) are oriented, so is \(E'\) and thus one can pick volume elements \(\vpi_\pm\in\ww{3}E^*_\pm\). Using the inclusion:
\ew
\ww{3}E^*_+ \ds \ww{3}E^*_- \emb \ww{3}(E_+ \ds E_-)^* \cong \ww{3}\T^*\lt(\bb{T}^6\rt),
\eew
one may regard \(\rh = \vpi_+ + \vpi_-\) as a 3-form on \(\bb{T}^6\). It is then simple to verify that \(\rh\) is an \slr\ 3-form on \(\bb{T}^6\) such that \(E_\pm\) are the \(\pm1\)-eigenbundles of the para-complex structure \(I_\rh\).

I now prove \tref{slr-htpy}.  Recall the statement of the theorem:\vs{3mm}

\noindent{\bf Theorem \ref{slr-htpy}.}
\em Let \(\N\) be a closed, oriented, 6-manifold with \(e(\N) = 0\) and suppose \(w_2(\N)^2 = 0\).  Write \(\rh_2:\sch{4}{\N;\bb{Z}} \to \sch{4}{\N;\rqt{\bb{Z}}{2\bb{Z}}}\) for reduction modulo 2 and define:
\ew
\sch{4}{\N;\bb{Z}}_{\bot w_2} = \lt\{ u \in \sch{4}{\N;\bb{Z}} ~\m|~ \rh_2u \cup w_2(\N) = 0 \rt\}.
\eew
Then, there is an injection from \(\rqt{\sch{4}{\N;\bb{Z}}_{\bot w_2}}{\text{2-torsion}}\) into the set of homotopy classes of \slr\ 3-forms on \(\N\) (equivalently closed \slr\ 3-forms, or \slr\ 3-forms in any fixed degree 3 cohomology class).  In particular, if \(\N\) is spin and \(b^4(\N)>0\), then each of these sets is infinite.\vs{2mm}\em

I remark that in order for \(\N\) to admit an oriented rank 3 distribution, the Euler class \(e(\N)\) must clearly vanish.  Thus, the significant hypothesis in \tref{slr-htpy} is \(w_2(\N)^2 = 0\).

\begin{proof}[Proof of \tref{slr-htpy}]
Recall the first spin characteristic class \(q_1\) defined by Thomas in \cite[Thm.\ 1.2]{OtCGotCSftSSG}, which is related to the first Pontryagin class \(p_1\) by \(p_1 = 2q_1\).  Since \(e(\N) = 0\) and \(w_2(\N)^2 = 0\), \cite[Cor.\ 1.7]{FoTkPoM} implies that for every \(u \in \sch{4}{\N;\bb{Z}}_{\bot w_2}\), there exists an oriented, spin, rank 3 distribution \(E\) on \(\N\) with \(q_1(E) = 2u\).  Moreover, given classes \(u, u' \in \sch{4}{\N;\bb{Z}}_{\bot w_2}\) with corresponding bundles \(E\) and \(E'\), note that if \(E\) and \(E'\) are homotopic as sections of \(\oGr_3(\T\N)\), then \(q_1(E) = q_1(E')\), and hence \(2(u-u') = 0\).  The result follows.

\end{proof}

\noindent \cref{spin-cor} now follows at once, by restricting attention to the case \(w_2(\N) = 0\).  I remark also that \(w_2(\N) = 0\) is necessary for \(\N\) to admit any extendible \slr\ 3-forms and is thus a natural condition to impose when studying \slr\ 3-forms.  (To see this, simply combine \pref{ext-off} and \tref{SG2-Exist}, together with the fact that the boundary of a spin manifold is also spin.)

Using the above results, one can give many examples of manifolds admitting multiple homotopy classes of \slr\ 3-forms.

\begin{Exs}\label{SLR-ext-exs}~\vs{-1mm}

\begin{enumerate}
\item As a simple example, take \(\N = \bb{T}^6\).  \(\bb{T}^6\) is parallelisable and so trivially it is orientable, spin and has vanishing Euler class.  Thus, \(\sch{4}{\bb{T}^6;\bb{Z}}_{\bot w_2} = \sch{4}{\bb{T}^6;\bb{Z}} \cong \bb{Z}^{15}\) and hence \(\bb{T}^6\) admits infinitely many distinct homotopy classes of \slr\ 3-forms.

\item Now, consider \(\N = S^6\). Since \(\ch(S^6) = 2\), \(S^6\) admits no \slr\ 3-forms.  Likewise, the manifolds \(Y_n\) (\(n > 1\)) considered in \sref{SLC-Ext} admit no \slr\ 3-forms.

\item Let \(\N = S^1 \x \K\), where \(\K\) is any closed, orientable, spin 5-manifold. Then, \(S^1\x \K\) is also spin and has vanishing Euler class. Thus, \(S^1\x \K\) admits \slr\ 3-forms. Moreover, if \(b^4(\N)=0\), then the relation \(b^n(\N) = b^n(\K) + b^{n-1}(\K)\) forces \(b^3(\K) = b^4(\K) = 0\), and hence \(b^1(\K) = b^2(\K) = 0\), too, by Poincar\'{e} duality, i.e. \(\K\) is a rational homology sphere.  Thus, unless \(\K\) is a rational homology sphere, \(\N\) admits infinitely many distinct homotopy classes of \slr\ 3-forms.

\item Let \(\mc{E}\) denote the Enriques surface -- i.e.\ the quotient of a K3 surface by a fixed-point-free holomorphic involution -- viewed as a real 4-manifold.  Then, the 6-manifold \(\N = \bb{T}^2 \x \mc{E}\) provides an example of a non-spin manifold which admits \slr\ 3-forms; in fact, \(\N\) admits an infinite number of distinct homotopy classes of \slr\ 3-forms, none of which can be extendible, as remarked above.

To verify this, it is necessary to recall some results on the topology of \(\mc{E}\).  Recall that \cite[Lem.\ 15.1]{CCS}, \cite[\S6.10]{RES}\footnote{Note a slight error in this second reference: \(h^{2,0}(\mc{E}) = 0\), not 1 as stated {\it loc.\ cit.}.}:
\ew
\sch{2}{\mc{E};\bb{Z}} \cong \bb{Z}^{10} \ds \rqt{\bb{Z}}{2\bb{Z}}
\eew
where the \(\rqt{\bb{Z}}{2\bb{Z}}\)-factor is generated by \(c_1(\mc{E})\) and recall also that the Betti numbers of \(\mc{E}\) are \(b^0(\mc{E}) = b^4(\mc{E}) = 1\), \(b^1(\mc{E}) = b^3(\mc{E}) = 0\) and \(b^2(\mc{E}) = 10\).  The restriction of \(c_1(\mc{E})\) modulo 2 is non-zero, hence \(w_2(\mc{E}) \ne 0\) and whence \(\mc{E}\), and also \(\N\), are not spin.  Nevertheless \(\ch(\mc{E}) = 12\) which vanishes modulo 2 and thus \(w_2(\mc{E})^{\cup2} = w_4(\mc{E}) = 0\).  Hence, by \tref{slr-htpy}, \(\N = \bb{T}^2 \x \mc{E}\) admits \slr\ 3-forms.  Moreover, given a class \(u \in \sch{4}{\N;\bb{Z}}\), identifying \(c_1(\mc{E})\) with an integral degree 2 cohomology class on \(\N\) in the natural way, one finds that:
\ew
2(u \cup c_1(\mc{E})) = u \cup 2c_1(\mc{E}) = 0
\eew
and hence \(u \cup c_1(\mc{E}) = 0\), since \(\sch{6}{\N;\bb{Z}}\) has no 2-torsion.  Since \(w_2(\N) = \rh_2c_1(\mc{E})\) it follows that:
\ew
\rh_2u \cup w_2(\N) = \rh_2(u \cup c_1(\mc{E})) = 0,
\eew
and thus \(\sch{4}{\N;\bb{Z}}_{\bot w_2} = \sch{4}{\N;\bb{Z}}\).  Since \(b^4(\N) = 11\), it follows that \(\mc{SL}_\bb{R}(\N)\) is infinite, as claimed.\\
\end{enumerate}
\end{Exs}

\subsection{\boldmath Examples of extendible \(\SL(3;\bb{R})^2\) 3-forms}\label{ext-slr}

The previous subsection saw examples of non-extendible \slr\ 3-forms.  This section provides explicit examples of extendible \slr\ 3-forms.  I begin with a preparatory result:

\begin{Prop}\label{E-iso}
Let \(\N\) be an oriented 6-manifold and let \(\rh\) an \slr\ 3-form on \(\N\). Then, \(\rh\) is extendible \iff\ the bundle \(\Iso(E_{+,\rh},E_{-,\rh})\) of isomorphisms from \(E_{+,\rh}\) to \(E_{-,\rh}\) has a global section.
\end{Prop}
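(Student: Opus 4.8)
The plan is to translate extendibility into a pointwise condition on the para-complex structure $I_\rh$, and then to compute the relevant symmetric bilinear form in the eigenbundle splitting. Write $E_\pm = E_{\pm,\rh}$, so that $\T\N = E_+ \ds E_-$ is the decomposition into the $(\pm1)$-eigenbundles of $I_\rh$. By \dref{ext-def}, $\rh$ is extendible \iff\ there is a global $2$-form $\om \in \Om^2(\N)$ such that, at every point, $\cal{I}_\rh(\om)$ has signature $(3,3)$ and $\om^3 < 0$ (cf.\ \pref{TRP}). I would first analyse the signature condition fibrewise.

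Recall from \eref{calIrh} that $\cal{I}_\rh(\om)(a,b) = \tfrac12\lt[\om(I_\rh a,b) + \om(I_\rh b,a)\rt]$. Since $I_\rh = +\Id$ on $E_+$ and $I_\rh = -\Id$ on $E_-$, direct substitution gives $\cal{I}_\rh(\om)(a,b) = 0$ whenever $a,b$ lie in the same eigenbundle, while $\cal{I}_\rh(\om)(a,b) = \om(a,b)$ for $a \in E_+$, $b \in E_-$. Hence, in block form \wrt\ $\T\N = E_+ \ds E_-$, the form $\cal{I}_\rh(\om)$ is ``hyperbolic'', determined solely by the cross-term $M = \om|_{E_+ \x E_-}$ viewed as a linear map $E_+ \to E_-^*$. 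A symmetric form of this shape on a $6$-dimensional space has signature $(3,3)$ exactly when $M$ is an isomorphism (and signature $(\Rnk M, \Rnk M)$ otherwise). Thus the signature condition holds \iff\ $M$ is a fibrewise isomorphism, i.e.\ a global section of $\Iso(E_+, E_-^*)$.

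It then remains to relate $\Iso(E_+, E_-^*)$ to $\Iso(E_+, E_-)$ and to dispose of the sign condition. For the former, I would fix any fibre metric on $E_-$ (which exists by a partition of unity) to obtain a bundle isomorphism $E_- \cong E_-^*$; composing with it shows that $\Iso(E_+, E_-^*)$ admits a section \iff\ $\Iso(E_+, E_-)$ does. In the forward direction, an extending $\om$ supplies the cross-term $M$ and hence such a section. In the converse direction, a section of $\Iso(E_+, E_-)$ yields, via $E_- \cong E_-^*$, a section $M$ of $\Iso(E_+, E_-^*) \pc E_+^* \ts E_-^* \pc \ww{2}\T^*\N$, which I regard directly as a $2$-form $\om$; then $\cal{I}_\rh(\om)$ has signature $(3,3)$ everywhere.

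The sign condition is then easily arranged: in local oriented frames $e_i$ of $E_+$ and $f_j$ of $E_-$ one computes $\om^3 = \pm 6\det(M)\, e^{123} \w f^{123}$, a nowhere-zero $6$-form whose sign relative to the orientation of $\N$ is locally constant. Replacing $\om$ by $-\om$ on any component where this sign is positive preserves signature $(3,3)$ (as $3 = 3$) while flipping the sign of $\om^3$, so $\om^3 < 0$ may be assumed throughout, establishing extendibility. I expect the fibrewise computation of $\cal{I}_\rh(\om)$ to be the technical core, while the only genuine subtleties are the duality $E_- \cong E_-^*$ and the sign constraint $\om^3 < 0$; both are handled by the elementary devices above, and no obstruction theory is required.
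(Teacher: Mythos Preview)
Your argument is correct and follows the same underlying idea as the paper's proof: both exploit that \(E_\pm\) are isotropic for \(\cal{I}_\rh\om\), so the form is governed entirely by the cross-term \(E_+ \to E_-^*\), and both use an auxiliary metric on \(E_-\) to pass between \(\Iso(E_+,E_-^*)\) and \(\Iso(E_+,E_-)\). The forward direction is essentially identical to the paper's (the paper phrases the isotropy as the identity \(\tld{g}(I_\rh\cdot,I_\rh\cdot) = -\tld{g}\), while you compute it directly in block form).

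The converse is where the presentations genuinely diverge. The paper, given \(L \in \Iso(E_+,E_-)\), builds auxiliary subbundles \(F_\pm = (\Id \pm L)(E_+)\), chooses a metric \(l\) on \(F_+\), and sets \(\tld{g} = l - l(I_\rh\cdot,I_\rh\cdot)\); this makes the signature \((3,3)\) visible as a definite-plus-definite splitting. Your route is more direct: you realise the cross-term itself as a \(2\)-form via the embedding \(E_+^* \ts E_-^* \emb \ww{2}\T^*\N\), so the hyperbolic form is recovered immediately from the block computation you already did. Your approach is shorter and avoids the auxiliary \(F_\pm\); the paper's has the mild advantage that the nondegeneracy (hence \(\om^3 \ne 0\)) is automatic from the construction of \(\tld{g}\), whereas you verify it by a local determinant calculation. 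Both handle the sign condition \(\om^3 < 0\) identically, by replacing \(\om\) with \(-\om\).
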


\begin{proof}
By \dref{ext-def}, \(\rh\) is extendible \iff\ \(\N\) admits a 2-form \(\om\) satisfying \(\om^3 <0\) such that \(\cal{I}_\rh\om\) is a pseudo-Riemannian metric of signature \((3,3)\).  Thus, suppose \(\rh\) is extendible and define \(\tld{g} = \cal{I}_\rh\om\). One may verify that \(\tld{g}(I_\rh\cdot,I_\rh\cdot) = -\tld{g}(\cdot,\cdot)\).  Thus, given \(u\in E_{+,\rh}\), for any other \(w\in E_{+,\rh}\) one has:
\ew
\tld{g}(u,w) = \tld{g}(I_\rh u,I_\rh w) = -\tld{g}(u,w) = 0
\eew
and hence \(\tld{g}(v,\cdot)\) may naturally be identified with an element of \(\lt(E_{-,\rh}\rt)^*\).

Now, choose a positive definite metric \(h\) on \(E_{-,\rh}\) and define \(L(v)\in E_{-,\rh}\) to be the unique element such that:
\ew
h(L(v),\cdot) = \tld{g}(v,\cdot)\in \lt(E_{-,\rh}\rt)^*.
\eew
Then, \(L\) defines an isomorphism \(E_{+,\rh} \to E_{-,\rh}\).

Conversely, let \(L:E_{+,\rh} \to E_{-,\rh}\) be a fibrewise isomorphism. Define subbundles \(F_\pm\pc\T\N\) by:
\ew
F_\pm = (\Id \pm L)(F_{+,\rh}).
\eew
Then, \(\T\N = F_+ \ds F_-\) and \(I_\rh\) maps \(F_+\) isomorphically onto \(F_-\) and {\it vice versa}.

Now, choose any positive definite metric \(l\) on \(F_+\), extend it to a symmetric bilinear form on \(\T\N\) by setting \(l(F_-,\cdot) = 0\) and define:
\ew
\tld{g}(\cdot,\cdot) = l(\cdot,\cdot) - l(I_\rh\cdot,I_\rh\cdot).
\eew
Then, \(\om(\cdot,\cdot) = \tld{g}(I_\rh\cdot,\cdot)\) is a 2-form and \(\cal{I}_\rh\om = \tld{g}\) has signature \((3,3)\).  In particular, it follows that \(\om^3 \ne 0\) and hence, by replacing \(\om\) by \(-\om\) if necessary, one may assume that \(\om^3<0\).  Thus, \(\rh\) is extendible.

\end{proof}

I remark that \pref{E-iso} has the following, curious result.  Recall that if \(\rh\) is an \slc\ 3-form, then \(\rh\) is always homotopic to \(-\rh\) (see \eref{U1-htpy}; see also \cite[\S4]{RoG2MwB}).  By applying \pref{E-iso}, a partial analogue for \slr\ 3-forms may be obtained:

\begin{Cor}\label{htpy-to-op}
Let \(\N\) be an oriented 6-manifold and let \(\rh\) be an extendible \slr\ 3-form on \(\N\). Then, \(\rh\) is homotopic (through \slr\ 3-forms) to \(-\rh\).
\end{Cor}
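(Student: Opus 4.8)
The plan is to reduce the statement, via the bijection $\mc{E}$ of \pref{slr-oGr3}, to a purely distributional claim, and then to exploit the isomorphism $E_{+,\rh} \cong E_{-,\rh}$ supplied by extendibility. First I would record how $-\rh$ sits relative to $\rh$. Negation preserves the \slr\ orbit (e.g.\ in the standard model, $-\rh_+ = \phi^*\rh_+$ for the orientation-preserving $\phi = \diag(-1,1,1,-1,1,1)$, which commutes with $I_0$), and one checks from this that $I_{-\rh} = I_\rh$, so that $E_{\pm,-\rh} = E_{\pm,\rh}$ \emph{as distributions}. The canonical orientation of $E_{+,-\rh}$ is, however, the one making $(-\rh)|_{E_{+,\rh}} = -\bigl(\rh|_{E_{+,\rh}}\bigr)$ positive, which is the reverse of the orientation induced by $\rh$. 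Hence, as oriented rank $3$ distributions, $E_{+,-\rh} = \ol{E_{+,\rh}}$, and therefore $\mc{E}([-\rh]) = \big[\,\ol{E_{+,\rh}}\,\big]$ while $\mc{E}([\rh]) = [E_{+,\rh}]$. Since $\mc{E}$ is a bijection, it suffices to homotope $E_{+,\rh}$ to $\ol{E_{+,\rh}}$ through sections of $\oGr_3(\T\N)$.

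To build this homotopy I would invoke \pref{E-iso}: as $\rh$ is extendible, there is a global fibrewise isomorphism $L : E_{+,\rh} \to E_{-,\rh}$. For $\theta \in [0,\pi]$ set
\[
D_\theta = \lt\{ \cos\theta\, v + \sin\theta\, L(v) ~\m|~ v \in E_{+,\rh} \rt\} \pc \T\N,
\]
oriented by transporting the orientation of $E_{+,\rh}$ along the map $v \mapsto \cos\theta\, v + \sin\theta\, L(v)$. Because $E_{+,\rh} \cap E_{-,\rh} = 0$, this map is injective for every $\theta$, so each $D_\theta$ is a genuine oriented rank $3$ distribution and $\theta \mapsto D_\theta$ is a path of sections of $\oGr_3(\T\N)$. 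At $\theta = 0$ one has $D_0 = E_{+,\rh}$ with its given orientation, while at $\theta = \pi$ the defining map is $v \mapsto -v$, so $D_\pi = E_{+,\rh}$ as a subbundle but carrying the orientation transported by $-\Id$.

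The decisive point — and the one I expect to require the most care — is the orientation bookkeeping. Since $E_{+,\rh}$ has odd rank $3$, the automorphism $-\Id$ reverses its orientation, so $D_\pi = \ol{E_{+,\rh}}$; this is precisely the sign that makes the argument work and that would fail for an even-rank eigenbundle. Thus $\{D_\theta\}_{\theta \in [0,\pi]}$ is the desired homotopy from $E_{+,\rh}$ to $\ol{E_{+,\rh}}$ in $\oGr_3(\T\N)$, and feeding this back through the bijection $\mc{E}$ of \pref{slr-oGr3} yields $[\rh] = [-\rh]$ in $\mc{SL}_\bb{R}(\N)$, i.e.\ $\rh$ and $-\rh$ are homotopic through \slr\ 3-forms. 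The only non-formal inputs are the global existence of $L$ (which is exactly the content of \pref{E-iso}, and the sole place where extendibility enters) and the verification that $-\rh$ reverses the eigenbundle orientations; everything else reduces to the odd-rank sign computation above.
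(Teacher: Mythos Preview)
Your proof is correct and essentially identical to the paper's: both reduce via \pref{slr-oGr3} to homotoping $E_{+,\rh}$ to $\ol{E}_{+,\rh}$ in $\oGr_3(\T\N)$, and both construct exactly the same homotopy $v \mapsto \cos(\pi t)v + \sin(\pi t)L(v)$ using the isomorphism $L$ supplied by \pref{E-iso}. You fill in more of the orientation bookkeeping (the identification $E_{+,-\rh} = \ol{E_{+,\rh}}$ and the odd-rank sign at $\theta = \pi$) than the paper does, but the argument is the same.
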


In particular, by the homotopy equivalence in \eref{HE}, if \(\rh\) is closed and extendible, then \(\rh\) is homotopic to \(-\rh\) through closed \slr\ 3-forms and, likewise, if \(\rh\) is exact and extendible, then \(\rh\) is homotopic to \(-\rh\) through exact \slr\ 3-forms.

\begin{proof}
By \pref{slr-oGr3}, it is equivalent to prove that the sections of \(\oGr_3(\N)\) induced by \(E_{+,\rh}\) and \(\ol{E}_{+,\rh}\) (the same bundle equipped with the opposite orientation) are homotopic.  Since \(\rh\) is extendible, one can choose \(L \in \Iso(E_{+,\rh},E_{-,\rh})\).  For \(t \in [0,1]\), define:
\ew
\bcd[row sep = 0pt]
\io_t:E_{+,\rh} \ar[r]& \T\N\\
v \ar[r, maps to]& \cos(\pi t)v + \sin(\pi t)L(v).
\ecd
\eew
Then, \(\io_t(E_{+,\rh})\) defines the required homotopy from \(E_{+,\rh}\) to \(\ol{E}_{+,\rh}\).

\end{proof}

The converse of \cref{htpy-to-op} does not hold.  Indeed, consider the manifold \(\N = \bb{T}^2 \x \mc{E}\) of \exref{SLR-ext-exs}(4).  It is known \cite[Lem.\ 15.1]{CCS}, \cite[\S6.10]{RES} that the intersection form of \(\mc{E}\) is indefinite with signature \(\si(\mc{E}) = 8\); in particular, since \(\ch(\mc{E}) = 12\) the equations:
\ew
\si(\mc{E}) \pm \ch(\mc{E}) \equiv 0 \hs{2mm} \text{mod 4}
\eew
hold.  Using \cite[Thm.\ 2(A)]{Fo2P&2KoACSoC4DM}, it follows that there exists an oriented rank 2 distribution \(\mc{E}\), which I shall denote by \(\pi\).  Write \(\del_1,\del_2\) for the standard basis of constant vector fields on \(\bb{T}^2\) and define an oriented rank 3 distribution \(E\) on \(\N = \bb{T}^2 \x \mc{E}\) via:
\ew
E = \<\del_1\? \ds \pi,
\eew
where \(E\) is oriented according to the ordering of the summands.  Then, \(E\) is manifestly homotopic to \(\ol{E}\): an explicit homotopy is given by:
\ew
E_t = \<\cos(\pi t)\del_1 + \sin(\pi t)\del_2\? \ds \pi.
\eew
Note however that, as observed above, none of the \slr\ 3-forms on \(\N\) are extendible.

I now return to the task of constructing explicit examples of extendible forms.  Recall the following definition:
\begin{Defn}[{See \cite[\S2]{VBOT&NS}}]
Let \(\M\) be an arbitrary manifold. A vector bundle \(\bb{E}\to\M\) is termed flat if it admits some connection of curvature 0. Equivalently, write \(\tld{\M}\) for the universal covering space of \(\M\) and view \(\tld{\M}\to\M\) as a principal \(\pi_1(\M)\)-bundle over \(\M\). Then, a vector bundle \(\bb{E}\to\M\) of rank \(k\) is flat \iff\ it can be written as:
\ew
\bb{E} = \rqt{\tld{\M} \x \bb{R}^k}{\pi_1(\M)}
\eew
for some representation \(\pi_1(\M)\to\GL(k;\bb{R})\).
\end{Defn}

This definition can be made very explicit for \(\bb{T}^6\): identify \(\bb{T}^6\cong\rqt{\bb{R}^6}{\bb{Z}^6}\) so that the quotient map \(\bb{R}^6\to\bb{T}^6\) is also the universal cover. Then, for every representation \(\varrho:\bb{Z}^6\to\GL(k;\bb{R})\), one obtains the following commutative diagram:
\ew
\begin{tikzcd}
\bb{R}^6\x\bb{R}^k \ar[d, "proj_1"] \ar[r, "\text{quot}"] & \rqt{\bb{R}^6 \x \bb{R}^k}{(\bb{Z}^6,\varrho)} = \bb{E} \ar[d]\\
\bb{R}^6 \ar[r, "\text{quot}"] & \rqt{\bb{R}^6}{\bb{Z}^6} \cong \bb{T}^6\\
\end{tikzcd}
\eew
where \(\bb{Z}^6\) acts on \(\bb{R}^6\) by translation and \(\bb{R}^k\) via \(\varrho\). Then, \(\bb{E}\to\bb{T}^6\) is the flat bundle corresponding to the representation \(\varrho\). Moreover, note that \(\T\lt(\bb{T}^6\rt)\) is simply the flat bundle corresponding to the trivial representation \(\varrho:\bb{Z}^6\to\GL(6;\bb{R})\).

The following result was proved in \cite[Thm.\ 3.3]{VBOT&NS}:
\begin{Thm}[Auslander--Szczarba]\label{ASThm}
Let \(\bb{E}_1\) and \(\bb{E}_2\) be flat, rank \(k\) vector bundles over \(\bb{T}^6\). Then, \(\bb{E}_1\) and \(\bb{E}_2\) are isomorphic \iff\ their first and second Stiefel--Whitney classes coincide.
\end{Thm}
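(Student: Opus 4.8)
The plan is to put an arbitrary flat bundle over $\bb{T}^6$ into a normal form as a sum of flat real line bundles and trivial factors, and then to classify these normal forms by obstruction theory, exploiting flatness and the torsion-freeness of $\sch{*}{\bb{T}^6;\bb{Z}}$ to annihilate every characteristic invariant beyond $w_1$ and $w_2$.

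\textbf{Reduction to sums of line bundles.} Since $\GL(k;\bb{R})$ deformation retracts onto $\Or(k)$, I may assume the defining representation $\varrho:\bb{Z}^6\to\GL(k;\bb{R})$ takes values in $\Or(k)$. The images of the six generators are commuting orthogonal matrices, so they admit a common orthogonal decomposition of $\bb{R}^k$ into $\varrho$-invariant subspaces of dimension $1$ and $2$. On a $1$-dimensional summand each generator acts by $\pm1$, giving a character $\chi\in\Hom(\bb{Z}^6,\rqt{\bb{Z}}{2\bb{Z}})=\sch{1}{\bb{T}^6;\rqt{\bb{Z}}{2\bb{Z}}}$ and hence a flat line bundle $L_\chi$; on a $2$-dimensional (necessarily irreducible) summand the generators act through $\SO(2)$, giving a flat $\SO(2)$-bundle. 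As conjugating $\varrho$ by a fixed orthogonal matrix yields an isomorphic bundle, this shows $\bb{E}\cong\bigoplus_i L_{\chi_i}\ds(\text{flat }\SO(2)\text{-bundles})$. Each flat $\SO(2)$-bundle is a complex line bundle carrying a flat connection, so by Chern--Weil theory its first Chern class is real-trivial; since $\sch{2}{\bb{T}^6;\bb{Z}}$ is torsion-free, the integral $c_1$ vanishes and the bundle is trivial. Hence every flat bundle satisfies $\bb{E}\cong\bigoplus_i L_{\chi_i}\ds\ul{\bb{R}}^{\,s}$, whence $w_1(\bb{E})=\sum_i\chi_i$ and $w_2(\bb{E})=\sum_{i<j}\chi_i\chi_j$.

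\textbf{The equivalence.} The forward implication is immediate, as Stiefel--Whitney classes are isomorphism invariants. For the converse I would compare $\bb{E}_1$ and $\bb{E}_2$ through the bundle $\Iso(\bb{E}_1,\bb{E}_2)$ over $\bb{T}^6$, whose fibre is homotopy equivalent to $\Or(k)$ and which admits a global section precisely when $\bb{E}_1\cong\bb{E}_2$. Building such a section skeleton by skeleton over the $6$-complex $\bb{T}^6$, the successive obstructions lie in $\sch{i+1}{\bb{T}^6;\pi_i\Or(k)}$. In the stable range ($k\ge7$, so that $\pi_i\Or(k)=\pi_i\Or$ for $i\le5$) Bott periodicity gives $\pi_0\Or=\pi_1\Or=\rqt{\bb{Z}}{2\bb{Z}}$, $\pi_2\Or=0$, $\pi_3\Or=\bb{Z}$ and $\pi_4\Or=\pi_5\Or=0$. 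The primary obstruction in $\sch{1}{\bb{T}^6;\rqt{\bb{Z}}{2\bb{Z}}}$ is $w_1(\bb{E}_1)+w_1(\bb{E}_2)$ and the secondary one in $\sch{2}{\bb{T}^6;\rqt{\bb{Z}}{2\bb{Z}}}$ is $w_2(\bb{E}_1)+w_2(\bb{E}_2)$, both of which vanish by hypothesis; the stage $i=2$ contributes nothing, the stage $i=3$ contributes an integral class in $\sch{4}{\bb{T}^6;\bb{Z}}$ detected by the difference of first Pontryagin classes, and the stages $i=4,5$ contribute nothing. Crucially, flatness forces all Pontryagin and Euler classes to vanish: their Chern--Weil representatives are built from the zero curvature, so they die in $\sch{*}{\bb{T}^6;\bb{R}}$, and torsion-freeness of $\sch{*}{\bb{T}^6;\bb{Z}}$ upgrades this to integral vanishing. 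Thus the $i=3$ obstruction also vanishes, a section exists, and $\bb{E}_1\cong\bb{E}_2$.

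\textbf{Main obstacle.} The difficulty is entirely in the converse, and two points demand care. First, the higher obstructions are defined only modulo the choices made over lower skeleta, so I must verify that the relevant difference classes really are the characteristic-class differences above and that the indeterminacy (governed by $\sch{i}{\bb{T}^6;\pi_i\Or}$) never prevents one from adjusting a partial section; the exterior-algebra, torsion-free structure of $\sch{*}{\bb{T}^6;\bb{Z}}$ is what keeps this controllable. Second, the theorem is asserted for all $k$, so the non-stable range $k\le6$ must be handled separately: there $\pi_i\Or(k)$ differs from $\pi_i\Or$ and an extra unstable obstruction (an Euler-type class in $\sch{k}{\bb{T}^6;\bb{Z}}$ coming from $\pi_{k-1}\Or(k)$) appears, which again vanishes by flatness, though ruling out any further surviving unstable invariant requires a direct check in each small rank. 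I expect this systematic use of flatness and of the torsion-freeness of the torus's integral cohomology to annihilate everything except $w_1$ and $w_2$ to be the crux; a pleasant by-product is the otherwise non-obvious combinatorial fact that, for such bundles, $w_1$ and $w_2$ determine all higher Stiefel--Whitney classes.
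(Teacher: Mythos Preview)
The paper does not prove this theorem: it is quoted from Auslander--Szczarba \cite[Thm.~3.3]{VBOT&NS} and used as a black box in Construction~\ref{flat-bun-cstr} and \pref{w2-char}. There is therefore no in-paper argument to compare your proposal against.

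That said, your first step coincides with \cite[Thm.~3.2]{VBOT&NS}, which the paper does invoke separately (in the proof of \pref{w2-char}): every flat bundle over a torus is a Whitney sum of flat line bundles. Your derivation of this---simultaneous orthogonal block-diagonalisation of commuting elements of \(\Or(k)\) into \(1\)- and \(2\)-dimensional pieces, followed by Chern--Weil plus torsion-freeness of \(\sch{2}{\bb{T}^6;\bb{Z}}\) to trivialise the \(\SO(2)\)-blocks---is essentially the original argument and is correct.

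Your second step, comparing two such sums via obstruction theory on \(\Iso(\bb{E}_1,\bb{E}_2)\), is a sound strategy, and your ``Main obstacle'' paragraph already identifies the genuine work remaining: pinning down the \(\sch{4}{\bb{T}^6;\bb{Z}}\) obstruction precisely (it is the difference of \(q_1 = \tfrac{1}{2}p_1\) rather than of \(p_1\) itself, though torsion-freeness of \(\sch{4}{\bb{T}^6;\bb{Z}}\) lets you pass from \(p_1 = 0\) to \(q_1 = 0\)), controlling the indeterminacy at each stage, and the case-by-case analysis for \(k \le 6\). As written the proposal is an honest outline rather than a complete proof, but nothing in it is wrong.
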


I shall restrict attention to flat, orientable bundles; these correspond to representations \(\varrho\) whose image lies in \(\GL_+(k,\bb{R})\).  By \tref{ASThm}, two flat, orientable, rank \(k\) vector bundles over \(\bb{T}^6\) are isomorphic \iff\ their second Stiefel--Whitney classes coincide.  Using this result, one can construct a large number of (non-homotopic) extendible \slr\ 3-forms on \(\bb{T}^6\).

\begin{Cstr}\label{flat-bun-cstr}
By \prefs{slr-oGr3} and \ref{E-iso}, it suffices to construct oriented rank 3 distributions \(E_+ \pc \T\lt(\bb{T}^6\rt)\) such that \(\T\lt(\bb{T}^6\rt) = E_+ \ds E_-\) for some \(E_- \cong E_+\).  Let \(\bb{E}\) represent an isomorphism class of flat, orientable, rank 3 vector bundles over \(\bb{T}^6\) and define \(\bb{T} = \bb{E}_+ \ds \bb{E}_-\), where \(\bb{E}_+ = \bb{E}_- = \bb{E}\) are identical and the subscripts only serve to keep track of the summands. Then \(\bb{T}\) is automatically orientable and hence \(w_1(\bb{T}) = 0\). Moreover:
\ew
w_2(\bb{T}) = 2\cdot w_2(\bb{E}) + w_1(\bb{E})^{\cup2} = 0 \in \sch{2}{\bb{T}^6;\rqt{\bb{Z}}{2\bb{Z}}},
\eew
where \(\cup\) denotes the usual cup-product on cohomology; thus by \tref{ASThm}, \(\bb{T}\) is oriented isomorphic to \(\T\lt(\bb{T}^6\rt)\).  Choose such an isomorphism and write \(E_\pm\) for the images of \(\bb{E}_\pm\) respectively under this isomorphism.  Then, the bundle \(\Iso(E_+,E_-)\) has a natural global section, corresponding to \(\Id\in\Iso(\bb{E}_+,\bb{E}_-)\).
\end{Cstr}

Given flat, orientable, rank 3 bundles \(\bb{E}\) and \(\bb{E}'\) representing different isomorphism classes, by \tref{ASThm} \(w_2(\bb{E}) \ne w_2(\bb{E}')\), hence \(w_2(E_+) \ne w_2(E'_+)\) and whence \(\rh\) and \(\rh'\) represent distinct homotopy classes.  Thus, Construction \ref{flat-bun-cstr} provides an injection from the set of isomorphism classes of flat, orientable, rank 3 bundles over \(\bb{T}^6\) into the set of homotopy classes of \slr\ 3-forms.  Moreover, the former set may be enumerated by determining which cohomology classes in \(\sch{2}{\bb{T}^6,\rqt{\bb{Z}}{2\bb{Z}}}\) can arise as the second Stiefel--Whitney class of flat, orientable, rank 3 vector bundles.  This is accomplished in the following result.
\begin{Prop}\label{w2-char}
Let \(w\in \sch{2}{\bb{T}^6,\rqt{\bb{Z}}{2\bb{Z}}}\). Then, \(w\) is the second Stiefel--Whitney class of a flat, orientable, rank 3 vector bundle over \(\bb{T}^6\) \iff\ it can be written as \(w = a\cup b\) for some classes \(a,b\in \sch{1}{\bb{T}^6,\rqt{\bb{Z}}{2\bb{Z}}}\).
\end{Prop}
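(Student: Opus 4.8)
The plan is to classify flat, orientable, rank 3 bundles over \(\bb{T}^6\) by representations of \(\pi_1(\bb{T}^6) = \bb{Z}^6\), and to compute \(w_2\) via a commutator pairing in the spin double cover.

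First I would record the translation. A flat, orientable, rank 3 bundle over \(\bb{T}^6\) is the same datum as a homomorphism \(\varrho:\bb{Z}^6 \to \GL_+(3;\bb{R})\), i.e.\ six commuting matrices \(A_1,\dots,A_6\) of positive determinant (the images of the standard generators \(e_1,\dots,e_6\)). Writing \(t_1,\dots,t_6\) for the dual generators of \(\sch{1}{\bb{T}^6;\rqt{\bb{Z}}{2\bb{Z}}}\), so that \(\sch{2}{\bb{T}^6;\rqt{\bb{Z}}{2\bb{Z}}} = \bigoplus_{i<j}\rqt{\bb{Z}}{2\bb{Z}}\cdot t_it_j\), the class \(w_2\) is determined by its restrictions to the 2-subtori \(\bb{T}^2_{ij}\); explicitly \(w_2 = \sum_{i<j}c_{ij}\,t_it_j\), where \(c_{ij}\) is the obstruction to lifting the restricted bundle over \(\bb{T}^2_{ij}\) to a spin bundle. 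Since \(\GL_+(3;\bb{R})\) retracts onto \(\SO(3)\), whose double cover is \(\Spin(3) = \SU(2)\), I would identify \(c_{ij}\) with the commutator pairing: choosing lifts \(\tld{A}_i,\tld{A}_j\) of (the \(\SO(3)\)-parts of) \(A_i,A_j\), one has \(\tld{A}_i\tld{A}_j\tld{A}_i^{-1}\tld{A}_j^{-1} = (-1)^{c_{ij}}\), independently of the lifts. This pairing is alternating and, crucially, locally constant under deformations of \(\varrho\) through homomorphisms (lifts vary continuously while the commutator stays in the discrete set \(\{\pm1\}\)), a fact I will exploit.

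For the \emph{if} direction, given \(a,b \in \sch{1}{\bb{T}^6;\rqt{\bb{Z}}{2\bb{Z}}} = \Hom(\bb{Z}^6,\rqt{\bb{Z}}{2\bb{Z}})\), I would construct \(\varrho\) using the Klein four-group of half-turns in \(\SO(3)\), generated by the commuting involutions \(P_1 = \diag(1,-1,-1)\) and \(P_2 = \diag(-1,1,-1)\). Setting \(\varrho(v) = P_1^{a(v)}P_2^{b(v)}\) gives a well-defined homomorphism, since \(P_1,P_2\) commute and have order 2. As their \(\SU(2)\)-lifts anticommute, a one-line computation yields \(c_{ij} = a_ib_j + a_jb_i\) with \(a_i = a(e_i)\), \(b_i = b(e_i)\), whence \(w_2 = \sum_{i<j}(a_ib_j+a_jb_i)\,t_it_j = a\cup b\).

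The \emph{only if} direction is the main work. Given commuting \(A_1,\dots,A_6 \in \GL_+(3;\bb{R})\), I would deform \(\varrho\) through homomorphisms into a standard form, using deformation-invariance of \(c\). The multiplicative Jordan decomposition \(A_i = S_iU_i\) (with all \(S_i,U_i\) mutually commuting, as the \(A_i\) do) gives the homomorphism path \(U_i^{1-t}=\exp((1-t)N_i)\) killing the unipotent parts, reducing to the semisimple case. There the \(A_i\) share a real decomposition \(\bb{R}^3 = \bigoplus_k W_k\) into joint real-irreducible subspaces of dimensions summing to \(3\), hence of type \(2+1\) or \(1+1+1\). In the \(2+1\) case each \(A_i\) acts on the plane as a rotation-scaling and on the line by a scalar that is positive since \(\det A_i>0\); scaling these moduli to \(1\) lands \(\varrho\) in a common \(\SO(2)\subset\SO(3)\), whose \(\SU(2)\)-lifts commute, so \(c\equiv0\) and \(w_2 = 0 = 0\cup0\). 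In the \(1+1+1\) case, scaling the real eigenvalues to their signs lands \(\varrho\) in the diagonal Klein four-group, reducing to the construction above and giving \(w_2 = a\cup b\). Thus \(w_2\) is always decomposable. The essential obstacle is precisely this structural reduction — deforming an arbitrary commuting six-tuple in \(\GL_+(3;\bb{R})\) into either \(\SO(2)\) or the diagonal Klein four-group — and it is here that the rank 3 hypothesis is used decisively; indeed, over \(\rqt{\bb{Z}}{2\bb{Z}}\) not every degree 2 class is decomposable (for instance \(t_1t_2 + t_3t_4\), whose associated alternating form has rank \(4\)), so the content of the proposition is that the dimension constraint forces the commutator pairing to have rank at most \(2\). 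Once the reduction is established, the computation of \(w_2\) in each model is routine.
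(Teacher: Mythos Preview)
Your argument is correct, but it takes a genuinely different route from the paper. The paper's proof is very short: it invokes the Auslander--Szczarba theorem that every flat bundle over a torus is isomorphic (as a vector bundle) to a Whitney sum of flat line bundles, writes \(\bb{E} = \ell_1 \ds \ell_2 \ds \ell_3\) with \(w_1(\ell_3) = w_1(\ell_1) + w_1(\ell_2)\) by orientability, and then reads off \(w_2(\bb{E}) = w_1(\ell_1)\cup w_1(\ell_2)\) from the Whitney product formula together with \(x^{\cup 2} = 0\) in \(\sch{*}{\bb{T}^6;\rqt{\bb{Z}}{2\bb{Z}}}\). The converse is equally direct: given \(a,b\), take \(\ell_1,\ell_2\) with \(w_1 = a,b\) and set \(\bb{E} = \ell_1 \ds \ell_2 \ds (\ell_1 \ts \ell_2)\). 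Your approach instead computes \(w_2\) as the commutator pairing in the double cover and then deforms the monodromy, via Jordan decomposition and simultaneous real diagonalisation, into either a common \(\SO(2)\) or the diagonal Klein four-group; in effect you are re-proving the rank-3 oriented case of the Auslander--Szczarba splitting by hand. What this buys you is a fully self-contained argument that makes transparent exactly where the rank-3 hypothesis enters (the \(2+1\) versus \(1+1+1\) dichotomy), at the cost of considerably more work. One small imprecision: your phrase ``the \(\SO(3)\)-parts of \(A_i,A_j\)'' is misleading, since polar parts of commuting matrices in \(\GL_+(3;\bb{R})\) need not commute; the clean statement is that the lifts live in the connected double cover \(\widetilde{\GL_+(3;\bb{R})}\), where the commutator of lifts of commuting elements lies in the central \(\{\pm1\}\). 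Your subsequent deformation argument only ever evaluates the pairing after landing in \(\SO(3)\), so this does not affect correctness.
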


\begin{proof}
Firstly, from the K\^^22{u}nneth formula (\cite[Thm.\ 3.15]{AT}), one may prove by induction that for any \(n\):
\ew
\sch{*}{\bb{T}^n;\rqt{\bb{Z}}{2\bb{Z}}} \cong \ww{*}\lt(\rqt{\bb{Z}}{2\bb{Z}}\rt)^n,
\eew
where:
\ew
\ww{*}\lt(\rqt{\bb{Z}}{2\bb{Z}}\rt)^n = \rqt{\Ts^*\lt(\rqt{\bb{Z}}{2\bb{Z}}\rt)^n}{\lt\{\ga \ts \ga = 0 ~\m|~ \ga \in \lt(\rqt{\bb{Z}}{2\bb{Z}}\rt)^n\rt\}}
\eew
as usual. (Here, the tensor product is taken over \(\rqt{\bb{Z}}{2\bb{Z}}\).)

Now, by \cite[Thm.\ 3.2]{VBOT&NS}, every flat vector bundle over a torus is isomorphic to a Whitney sum of flat line bundles.  (The statement of Thm.\ 3.2 in \cite{VBOT&NS} does not include the fact that the line bundles are themselves flat, however this result follows from the proof given on p.\ 273 {\it op.\ cit.}.) Thus, consider \(\bb{E} = \ell_1 \ds \ell_2 \ds \ell_3\) for \(\ell_i\) flat line bundles over \(\bb{T}^6\). The requirement that \(\bb{E}\) be orientable is equivalent to the requirement that \(w_1(\bb{E}) = 0\), i.e.\ that:
\ew
w_1(\ell_3) = w_1(\ell_1) + w_1(\ell_2).
\eew
Using this relation, one may compute that:
\ew
w_2(\bb{E}) &= w_1(\ell_1)\cup w_1(\ell_2) + w_1(\ell_1)\cup \lt[w_1(\ell_1) + w_1(\ell_2)\rt] + w_1(\ell_2) \cup \lt[w_1(\ell_1) + w_1(\ell_2)\rt]\\
&= w_1(\ell_1) \cup w_1(\ell_2)
\eew
since \(w_1(\ell_1)^{\cup2} = w_1(\ell_2)^{\cup2} = 0\), and so \(w_2(\bb{E})\) has the form claimed.

Conversely, suppose given a class \(w\in \sch{2}{\bb{T}^6,\rqt{\bb{Z}}{2\bb{Z}}}\) such that \(w = a\cup b\) for \(a,b\in \sch{1}{\bb{T}^6,\rqt{\bb{Z}}{2\bb{Z}}}\).  Using the diagram:
\ew
\bcd
\Hom\lt(\pi_1\lt(\bb{T}^6\rt),O(1)\rt) \cong \Hom\lt(H_1(\bb{T}^6;\bb{Z}),\rqt{\bb{Z}}{2\bb{Z}}\rt) \ar[dd, ^^22 \mLa{\cong} ^^22] \ar[rd, ^^22 \mLa{\cong} ^^22] &\\
& \sch{1}{\bb{T}^6,\rqt{\bb{Z}}{2\bb{Z}}}\\
\rqt{\text{Flat bundles}}{\text{isomorphism}} \ar[ur, ^^22 \mLa{w_1} ^^22] &
\ecd
\eew
there exist flat line bundles \(\ell_1\) and \(\ell_2\) over \(\bb{T}^6\) such that \(w_1(\ell_1) = a\) and \(w_1(\ell_2) = b\).  Then, \(w_1(\ell_1 \ts \ell_2) = a+b\) and thus:
\ew
w_1[\ell_1 \ds \ell_2 \ds (\ell_1 \ts \ell_2)] = a + b + (a+b) = 0,
\eew
implying that \(\bb{E} = \ell_1 \ds \ell_2 \ds (\ell_1 \ts \ell_2)\) is orientable. Moreover, the earlier calculation also shows that \(w_2(\bb{E}) = a\cup b\), as required.

\end{proof}

Using \pref{w2-char}, one can count the number of distinct homotopy classes of extendible \slr\ 3-forms produced by Construction \ref{flat-bun-cstr}.  Firstly, note that there is a bijective correspondence between the non-zero elements of \(\sch{2}{\bb{T}^6,\rqt{\bb{Z}}{2\bb{Z}}}\) and the elements of \(\bb{P}_{\bb{Z}/2}\lt(\sch{2}{\bb{T}^6,\rqt{\bb{Z}}{2\bb{Z}}}\rt)\).  Since:
\ew
\sch{2}{\bb{T}^6,\rqt{\bb{Z}}{2\bb{Z}}} \cong \ww{2}\sch{1}{\bb{T}^6,\rqt{\bb{Z}}{2\bb{Z}}},
\eew
where the exterior-square is taken over the base field \(\rqt{\bb{Z}}{2\bb{Z}}\), it follows that the set of non-zero second Stiefel--Whitney classes of flat orientable rank 3 bundles over \(\bb{T}^6\) is precisely the image of the `Pl\"{u}cker-type' embedding:
\ew
\bcd[row sep = 0pt]
\Gr_2\lt(\sch{1}{\bb{T}^6,\rqt{\bb{Z}}{2\bb{Z}}}\rt) \ar[r, hook]& \bb{P}_{\bb{Z}/2}\lt(\ww{2}\sch{1}{\bb{T}^6,\rqt{\bb{Z}}{2\bb{Z}}}\rt)\\
\Pi \ar[r, maps to]& \ww{2}\Pi\hs{1.5pt}.
\ecd
\eew
Since \(\Gr_2\lt(\sch{1}{\bb{T}^6,\rqt{\bb{Z}}{2\bb{Z}}}\rt)\) contains 651 elements (see \aref{Counting-Planes}) Construction \ref{flat-bun-cstr} generates \(652 = 651 + 1\) distinct homotopy classes of extendible \slr\ 3-forms over \(\bb{T}^6\) (the extra case arising from \(w_2(\bb{E}) = 0\)).  Moreover, by applying the homotopy equivalences in \eref{HE} (and since extendibility is a homotopy invariant), Construction \ref{flat-bun-cstr} implies the existence of 652 distinct homotopy classes of closed, extendible \slr\ 3-forms on \(\bb{T}^6\), and likewise for extendible \slr\ 3-forms in any given degree 3 cohomology class.\\

\appendix

\section{Enumerating \(k\)-planes in \(\lt(\rqt{\bb{Z}}{2\bb{Z}}\rt)^n\)}\label{Counting-Planes}

The aim of this appendix is to prove the following result.
\begin{Prop}\label{no-planes}
Let \(\bb{F}\) be a finite field.  Recall the \(q\)-Pochhammer symbol:
\ew
(a;q)_n = \prod_{i=0}^{n-1} (1-aq^i),
\eew
where \(a\in\bb{R}\), \(q\in(0,1)\) and \(n\in\bb{N}\).  Then:
\ew
\lt|\Gr_k(\bb{F}^n)\rt| = N^{k(n-k)}\frac{\lt(\frac{1}{N};\frac{1}{N}\rt)_n}{\lt(\frac{1}{N};\frac{1}{N}\rt)_k\lt(\frac{1}{N};\frac{1}{N}\rt)_{(n-k)}}.
\eew
\end{Prop}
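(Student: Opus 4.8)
The plan is to reduce the statement to the classical enumeration of $k$-dimensional subspaces of $\bb{F}^n$ (the Gaussian binomial coefficient) and then match the result against the stated $q$-Pochhammer expression by elementary algebra. Throughout I write $N = |\bb{F}|$, so that $1/N \in (0,1)$ as the definition of $(a;q)_n$ requires.

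First I would count via ordered independent tuples. The number of linearly independent ordered $k$-tuples of vectors in $\bb{F}^n$ is $\prod_{i=0}^{k-1}(N^n - N^i)$, since each successive vector may be chosen arbitrarily outside the span of its predecessors, which has exactly $N^i$ elements at the $i$-th stage. Running the identical count inside a single fixed $k$-dimensional subspace shows that each such subspace admits precisely $\prod_{i=0}^{k-1}(N^k - N^i)$ ordered bases. As every independent $k$-tuple spans a unique $k$-plane, dividing yields
\[
|\Gr_k(\bb{F}^n)| = \frac{\prod_{i=0}^{k-1}(N^n - N^i)}{\prod_{i=0}^{k-1}(N^k - N^i)}.
\]

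Next I would convert this to the stated form. Factoring $N^n - N^i = N^i(N^{n-i}-1)$ and $N^k - N^i = N^i(N^{k-i}-1)$, the common factor $N^{k(k-1)/2}$ cancels, and after the substitutions $j = n-i$ in the numerator and $j = k-i$ in the denominator one obtains the symmetric expression $|\Gr_k(\bb{F}^n)| = \prod_{j=n-k+1}^{n}(N^j-1) \big/ \prod_{j=1}^{k}(N^j-1)$. Writing $N^j - 1 = N^j(1 - N^{-j})$ in both numerator and denominator extracts a factor $N^{e}$, where $e = \sum_{j=n-k+1}^{n} j - \sum_{j=1}^{k} j$; a short computation with triangular numbers gives $e = k(n-k)$, matching the prefactor in the claim. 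Finally I would recognise $\prod_{j=1}^{m}(1 - N^{-j}) = (1/N;1/N)_m$ directly from the definition of the $q$-Pochhammer symbol, and use the splitting $\prod_{j=1}^{n}(1-N^{-j}) = \big(\prod_{j=1}^{n-k}(1-N^{-j})\big)\big(\prod_{j=n-k+1}^{n}(1-N^{-j})\big)$ to rewrite the surviving quotient as $(1/N;1/N)_n \big/ \big[(1/N;1/N)_k\,(1/N;1/N)_{n-k}\big]$, which completes the proof.

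There is no serious obstacle here: the argument is entirely combinatorial and the identity is a reformulation of a standard count. The one place requiring genuine care is the bookkeeping of the power of $N$, so I would verify the exponent computation $e = k(n-k)$ explicitly, as this is the sole step where an arithmetic slip could occur.
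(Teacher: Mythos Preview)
Your proof is correct. It differs from the paper's approach: the paper realises \(\Gr_k(\bb{F}^n)\) as the quotient of \(\GL(n;\bb{F})\) by the parabolic subgroup \(\big[\GL(k;\bb{F})\times\GL(n-k;\bb{F})\big]\sdp\End(\bb{F}^{n-k},\bb{F}^k)\), computes \(|\GL(m;\bb{F})| = N^{m^2}(1/N;1/N)_m\) by a separate induction (via the case \(k=1\) of the same quotient description), and then divides. Your argument bypasses the group quotient entirely by counting ordered independent \(k\)-tuples directly and dividing by the number of ordered bases of a fixed \(k\)-plane. Both routes arrive at the Gaussian binomial coefficient and then perform the same algebraic conversion to \(q\)-Pochhammer form; your approach is shorter and more self-contained, while the paper's has the incidental benefit of recording the order of \(\GL(n;\bb{F})\) and the parabolic description as standalone lemmas.
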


Initially, let \(\bb{F}\) be an arbitrary field.
\begin{Lem}\label{Gr&GL}
\ew
\Gr_k(\bb{F}^n) \cong \rqt{\GL(n;\bb{F})}{\big[\GL(k;\bb{F})\x\GL(n-k;\bb{F})\big]\sdp\End\lt(\bb{F}^{n-k},\bb{F}^k\rt)}
\eew
where the multiplication on \(\big[\GL(k;\bb{F})\x\GL(n-k;\bb{F})\big]\sdp\End\lt(\bb{F}^{n-k},\bb{F}^k\rt)\) is given by:
\ew
(A,B;C)\cdot(A',B';C') = (AA',BB',AC'B^{-1} +C).
\eew
Here, \(A,A'\in\GL(k;\bb{F})\), \(B,B'\in\GL(n-k,\bb{F})\) and \(C,C'\in\End\lt(\bb{F}^{n-k},\bb{F}^k\rt)\).
\end{Lem}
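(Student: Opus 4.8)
The plan is to realise $\Gr_k(\bb{F}^n)$ as a homogeneous space for $\GL(n;\bb{F})$. The group $\GL(n;\bb{F})$ acts on $k$-planes in $\bb{F}^n$ in the obvious way, and I would first check that this action is transitive: given any $k$-plane $V$, choose a basis $(v_1,\dots,v_k)$ of $V$ and extend it to a basis $(v_1,\dots,v_n)$ of $\bb{F}^n$; the matrix carrying the standard basis to $(v_1,\dots,v_n)$ lies in $\GL(n;\bb{F})$ and sends the standard $k$-plane $V_0 = \bb{F}^k \ds \{0\}$ to $V$. By the orbit--stabiliser theorem this identifies $\Gr_k(\bb{F}^n)$ with $\rqt{\GL(n;\bb{F})}{\Stab(V_0)}$, so it remains only to identify $\Stab(V_0)$ with the semidirect product appearing on the right-hand side of the lemma.

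Next I would compute the stabiliser explicitly. Writing an element of $\GL(n;\bb{F})$ in block form $\bpm A & C \\ D & B \epm$ with $A$ of size $k\x k$ and $B$ of size $(n-k)\x(n-k)$, the condition that $V_0$ be preserved is exactly $D = 0$, and invertibility then forces $A \in \GL(k;\bb{F})$ and $B \in \GL(n-k;\bb{F})$, while $C \in \End(\bb{F}^{n-k},\bb{F}^k)$ is unconstrained. Thus $\Stab(V_0)$ is the block upper-triangular (parabolic) subgroup. To match the stated group law, I would define the map $(A,B;C) \mt \bpm A & CB \\ 0 & B \epm$ and verify that it is a bijective homomorphism: a direct block multiplication gives upper-right entry $AC'B' + CBB' = (AC'B^{-1}+C)(BB')$, which is precisely the entry attached to the product $(AA',BB';AC'B^{-1}+C)$, so the stated multiplication is reproduced.

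The only real subtlety --- and the place where I would take care --- is matching the multiplication law exactly, since the naive parametrisation $(A,B;C)\mt\bpm A & C \\ 0 & B\epm$ produces the \emph{untwisted} composite $AC' + CB'$ rather than the required $AC'B^{-1}+C$. The twist by $B$ in the parametrisation is what converts this into the semidirect-product law, and conceptually it reflects the Levi decomposition of the parabolic: the normal unipotent radical $U = \bigl\{\bpm I & C \\ 0 & I\epm\bigr\}$ is acted upon by the Levi factor $L = \GL(k;\bb{F})\x\GL(n-k;\bb{F})$ by conjugation, and the one-line computation $\diag(A,B)\bpm I & C \\ 0 & I\epm\diag(A,B)^{-1} = \bpm I & ACB^{-1} \\ 0 & I\epm$ exhibits the action $C \mt ACB^{-1}$ underlying the claimed semidirect product $L \sdp U$. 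Everything else is routine, so no further obstruction arises.
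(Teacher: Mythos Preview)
Your proposal is correct and follows essentially the same approach as the paper: both use orbit--stabiliser to identify the Grassmannian as a quotient, compute the stabiliser of the standard $k$-plane as the block upper-triangular parabolic, and then reparametrise via $D \leftrightarrow CB$ (the paper writes this as $(A,B,D)\mt(A,B,DB^{-1})$, which is just the inverse of your map) to obtain the stated semidirect-product law. Your additional remarks on the Levi decomposition are a nice conceptual gloss but go slightly beyond what the paper records.
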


\begin{proof}
Clearly \(\GL(n,\bb{F})\) acts transitively on \(\Gr_k(\bb{F}^n)\). Thus, fix \(\Pi\in\Gr_k(\bb{F}^n)\) and choose an algebraic complement \(\Pi'\) to \(\Pi\) in \(\bb{F}^n\). \Wrt\ the splitting \(\Pi \ds \Pi' \cong \bb{F}^n\),  the stabiliser in \(\GL(n,\bb{F})\) of \(\Pi\) consists precisely of those linear maps of the form:
\ew
\bpm
A & D\\
    & B
\epm,
\eew
where \(A\in\GL(k;\bb{F})\), \(B\in\GL(n-k;\bb{F})\) and \(D\in\End\lt(\bb{F}^{n-k},\bb{F}^k\rt)\).  The map \((A,B,D) \mt (A,B,DB^{-1})\) defines an isomorphism from the stabiliser of \(\Pi\) to the group \(\big[\GL(k;\bb{F})\x\GL(n-k;\bb{F})\big]\sdp\End\lt(\bb{F}^{n-k},\bb{F}^k\rt)\), as defined above.

\end{proof}

Now, restrict attention to the case where \(\bb{F}\) is a finite field, say \(|\bb{F}| = N\).
\begin{Lem}\label{noP}
Write \(\bb{FP}^{n-1} = \Gr_1(\bb{F}^n)\). Then:
\ew
\lt|\bb{F}\bb{P}^{n-1}\rt| = \frac{N^n-1}{N-1}.
\eew
\end{Lem}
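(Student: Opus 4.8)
The plan is to realise $\bb{F}\bb{P}^{n-1} = \Gr_1(\bb{F}^n)$ as the set of orbits of the nonzero vectors of $\bb{F}^n$ under the scaling action of the multiplicative group $\bb{F}^\x = \bb{F}\osr\{0\}$, and then simply to count. Indeed, every $1$-dimensional subspace $\ell \pc \bb{F}^n$ is spanned by a nonzero vector, and two nonzero vectors $v, v'$ span the same line \iff\ $v' = \la v$ for some $\la \in \bb{F}^\x$; thus lines correspond bijectively to the $\bb{F}^\x$-orbits on $\bb{F}^n \osr \{0\}$, and it suffices to enumerate these orbits.

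First I would count the nonzero vectors: since $\bb{F}^n$ has $N^n$ elements, there are exactly $N^n - 1$ of them. Next I would show that each orbit (equivalently, each line) contains exactly $N - 1$ nonzero vectors. For this, fix a nonzero $v$ and consider the map $\bb{F}^\x \to \bb{F}^n$, $\la \mt \la v$; its image is precisely the set of nonzero vectors lying on the line spanned by $v$, and it is injective since $\la v = \mu v$ forces $(\la - \mu)v = 0$, whence $\la = \mu$. Hence every line carries exactly $|\bb{F}^\x| = N - 1$ nonzero vectors. Finally, since the $N^n - 1$ nonzero vectors are partitioned into lines each of size $N - 1$, the number of lines is $\lt|\bb{F}\bb{P}^{n-1}\rt| = \frac{N^n - 1}{N - 1}$, as claimed.

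There is essentially no serious obstacle here; the argument is elementary orbit-counting. The one point that genuinely requires care — and the only place the field hypothesis enters — is the freeness of the scaling action, i.e.\ that no nonzero vector is fixed by a nontrivial scalar. This uses that $\bb{F}$ has no zero divisors, which is what guarantees that each orbit has the \emph{full} size $N-1$ rather than being smaller. It is exactly this uniformity of orbit sizes that legitimises the naive division $\frac{N^n-1}{N-1}$; over a mere ring the orbits could have varying sizes and the count would fail.
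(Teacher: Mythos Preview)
Your proof is correct and is essentially the same as the paper's: both count the $N^n-1$ nonzero vectors, observe that each line contains exactly $N-1$ of them, and divide. You have simply spelled out the orbit-counting and the freeness of the $\bb{F}^\x$-action in more detail than the paper does.
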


\begin{proof}
Every non-zero element in \(\bb{F}^n\) (of which there are \(N^n-1\)) determines a unique line through the origin, however each line through the origin contains precisely \(N-1\) non-zero points. The result follows.

\end{proof}

\begin{Lem}\label{noGL}
\e\label{no-GL}
\lt|\GL(n,\bb{F})\rt| = N^{(n^2)}\prod_{i=1}^n \lt(1-\lt(\frac{1}{N}\rt)^i\rt) = N^{n^2}\lt(\frac{1}{N};\frac{1}{N}\rt)_n.
\ee
\end{Lem}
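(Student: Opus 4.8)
The plan is to count invertible matrices by identifying $\GL(n,\bb{F})$ with the set of ordered bases of $\bb{F}^n$: a matrix lies in $\GL(n,\bb{F})$ \iff\ its columns form a basis, so counting invertible matrices is the same as counting ordered $n$-tuples of linearly independent vectors. I would construct such a tuple one vector at a time, recording at each stage the number of admissible choices.

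First I would count the choices for each successive column. The first column may be any non-zero vector, giving $N^n - 1$ options. Having chosen the first $i-1$ columns -- which span an $(i-1)$-dimensional subspace, hence contain exactly $N^{i-1}$ vectors -- the $i$-th column may be taken to be any vector lying outside this span, giving $N^n - N^{i-1}$ options, independently of the previous choices. Multiplying over $i = 1,\dots,n$ then yields:
\ew
\lt|\GL(n,\bb{F})\rt| = \prod_{i=1}^n \lt(N^n - N^{i-1}\rt).
\eew

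The remaining task is purely algebraic: to massage this product into the claimed $q$-Pochhammer form. I would factor $N^{i-1}$ out of the $i$-th term, writing $N^n - N^{i-1} = N^{i-1}\lt(N^{n-i+1} - 1\rt)$, so that the product splits as $\lt(\prod_{i=1}^n N^{i-1}\rt)\lt(\prod_{i=1}^n (N^{n-i+1} - 1)\rt) = N^{n(n-1)/2}\prod_{j=1}^n (N^j - 1)$, after reindexing $j = n-i+1$ in the second factor. Factoring a further $N^j$ out of each $N^j - 1$ converts the second product into $N^{n(n+1)/2}\prod_{j=1}^n (1 - N^{-j})$, and combining the powers of $N$ gives the exponent $n(n-1)/2 + n(n+1)/2 = n^2$, whence:
\ew
\lt|\GL(n,\bb{F})\rt| = N^{n^2}\prod_{j=1}^n \lt(1 - N^{-j}\rt) = N^{n^2}\lt(\frac{1}{N};\frac{1}{N}\rt)_n,
\eew
the last equality being merely the definition of the $q$-Pochhammer symbol with $a = q = 1/N$. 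The counting argument itself is routine; the only real point of care is keeping the exponent bookkeeping straight across the two factorisations, and this is where I would slow down and double-check that the powers of $N$ combine to exactly $n^2$.
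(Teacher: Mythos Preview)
Your argument is correct and is the standard textbook proof: identify \(\GL(n,\bb{F})\) with ordered bases of \(\bb{F}^n\), count column-by-column, then rearrange the powers of \(N\). The algebra is clean and the exponent bookkeeping checks out.

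The paper takes a different route. Rather than counting bases directly, it proceeds by induction on \(n\), using the description of \(\Gr_k(\bb{F}^n)\) as a quotient of \(\GL(n;\bb{F})\) (\lref{Gr&GL}) in the special case \(k=1\), together with the count \(\lt|\bb{FP}^n\rt| = \frac{N^{n+1}-1}{N-1}\) from \lref{noP}. This yields the recursion
\ew
\lt|\GL(n+1;\bb{F})\rt| = \lt|\bb{FP}^n\rt| \cdot \lt|\GL(1;\bb{F})\rt| \cdot \lt|\GL(n;\bb{F})\rt| \cdot N^n,
\eew
from which the closed form follows. Your approach is more elementary and self-contained; the paper's approach has the virtue of reusing the structural lemma (\lref{Gr&GL}) that is needed anyway for the subsequent count of \(\lt|\Gr_k(\bb{F}^n)\rt|\), so that the whole appendix flows from a single decomposition. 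Either is perfectly acceptable here.
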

\vs{5mm}

\begin{proof}
Proceed by induction. In the case \(n=1\), \(\GL(1,\bb{F})\) consists of the non-zero elements of \(\bb{F}\) and thus has size \((N-1)\), as required.  In general, using \lref{Gr&GL}, one sees that:
\ew
\frac{\lt|\GL(n+1;\bb{F})\rt|}{\lt|\GL(1;\bb{F})\rt| \x \lt|\GL(n;\bb{F})\rt| \x \lt|\lt(\bb{F}^n\rt)^*\rt|} = \lt|\bb{FP}^n\rt|.
\eew
Thus, by using \lref{noP} together with \(\lt|\lt(\bb{F}^n\rt)^*\rt| = N^n\), one sees inductively that:
\ew
\lt|\GL(n+1;\bb{F})\rt| &= |(\bb{F}^n)^*| \1 |\GL(1;\bb{F})| \1 |\bb{FP}^n| \1 \lt|\GL(n;\bb{F})\rt|\\
&= N^n \1 (N-1) \1 \frac{N^{n+1}-1}{N-1} \1 N^{(n^2)}\lt(\frac{1}{N};\frac{1}{N}\rt)_n\\
&= N^{n^2 + 2n + 1} \lt(1 - \lt(\frac{1}{N}\rt)^{n+1}\rt)\lt(\frac{1}{N};\frac{1}{N}\rt)_n\\
& = N^{(n+1)^2}\lt(\frac{1}{N};\frac{1}{N}\rt)_{n+1},
\eew
as required.

\end{proof}

I now prove \pref{no-planes}.

\begin{proof}
Using \lref{Gr&GL}, one computes that:
\ew
\lt|\Gr_k(\bb{F}^n)\rt| = \frac{\lt|\GL(n;\bb{F})\rt|}{\lt|\GL(k;\bb{F})\rt| \x \lt|\GL(n-k;\bb{F})\rt| \x \lt|\End\lt(\bb{F}^{n-k},\bb{F}^k\rt)\rt|}.
\eew
Substituting the result of \lref{noGL} together with \(\lt|\End\lt(\bb{F}^{n-k},\bb{F}^k\rt)\rt| = N^{k(n-k)}\) yields:
\ew
\lt|\Gr_k(\bb{F}^n)\rt| = \frac{N^{n^2}\lt(\frac{1}{N};\frac{1}{N}\rt)_n}{N^{k^2}\lt(\frac{1}{N};\frac{1}{N}\rt)_k \x N^{(n-k)^2}\lt(\frac{1}{N};\frac{1}{N}\rt)_{(n-k)} \cdot N^{k(n-k)}}.
\eew
The result follows from the identity \(k^2 + (n-k)^2 + k(n-k) = n^2 - k(n-k)\).

\end{proof}

In particular, the number of 2-planes in 6-dimensional space over \(\bb{F} = \rqt{\bb{Z}}{2\bb{Z}}\) is:
\ew
2^{2\cdot4}\frac{\lt(\frac{1}{2};\frac{1}{2}\rt)_6}{\lt(\frac{1}{2};\frac{1}{2}\rt)_2\lt(\frac{1}{2};\frac{1}{2}\rt)_4} = 651,
\eew
as claimed in \sref{slr}.

~\vs{5mm}

\noindent Laurence H.\ Mayther\\
University of Cambridge\\
United Kingdom\\
{\it lhm32@cam.ac.uk}

\end{document}